\definecolor{refkey}{gray}{0.8}
\definecolor{labelkey}{gray}{0.8}
\newcommand{\remove}[1]{{}}
\newcommand{\cut}[1]{}
\def\bE{\mathbb{E}}
\newcommand{\nn}{\nonumber}
\newcommand{\ba}{\left[ \begin{array}}
\newcommand{\ea}{\\ \end{array} \right]}
\newcommand{\qd}{\hfill{$\blacksquare$}}
\newcommand{\define}{\;\stackrel{\Delta}{=}\;}
\newcommand{\eq}[1]{\begin{align}#1\end{align}}
\def\tran{^{\mathsf{T}}}
\def\qed{\mbox{\rule[0pt]{1.3ex}{1.3ex}}}
\def\d{{\boldsymbol{d}}}
\def\u{{\boldsymbol{u}}}
\newcommand{\cp}{{\scriptstyle{\mathcal{P}}}}
\newcommand{\sw}{{\scriptstyle{\mathcal{W}}}}
\newcommand{\sx}{{\scriptstyle{\mathcal{X}}}}
\newcommand{\sy}{{\scriptstyle{\mathcal{Y}}}}
\newcommand{\sz}{{\scriptstyle{\mathcal{Z}}}}
\newcommand{\twd}{\widetilde{\scriptstyle{\mathcal{W}}}}
\newcommand{\tzd}{\widetilde{\scriptstyle{\mathcal{Z}}}}
\newcommand{\tcA}{\overline{{\mathcal{A}}}}
\newcommand{\tA}{\overline{A}}
\newcommand{\tyd}{\widetilde{\scriptstyle{\mathcal{Y}}}}
\newcommand{\nnb}{\nonumber \\}
\newcommand{\cA}{{\mathcal{A}}}
\newcommand{\cB}{{\mathcal{B}}}
\newcommand{\cD}{{\mathcal{D}}}
\newcommand{\cE}{{\mathcal{E}}}
\newcommand{\cH}{{\mathcal{H}}}
\newcommand{\cI}{{\mathcal{I}}}
\newcommand{\cJ}{{\mathcal{J}}}
\newcommand{\cL}{{\mathcal{L}}}
\newcommand{\cM}{{\mathcal{M}}}
\newcommand{\cN}{{\mathcal{N}}}
\newcommand{\cP}{{\mathcal{P}}}
\newcommand{\cQ}{{\mathcal{Q}}}
\newcommand{\cR}{{\mathcal{R}}}
\newcommand{\cS}{{\mathcal{S}}}
\newcommand{\cT}{{\mathcal{T}}}
\newcommand{\cV}{{\mathcal{V}}}
\newcommand{\cX}{{\mathcal{X}}}
\newcommand{\RR}{\mathbb{R}}
\newcommand{\diag}{{\mathrm{diag}}} 
\newcommand{\col}{{\mathrm{col}}} 
\newcommand{\grad}{{\nabla}}    
\DeclareMathOperator*{\argmin}{arg\,min}
\DeclareMathOperator*{\argmax}{arg\,max}
\newcommand{\bc}{\begin{center}}
\newcommand{\ec}{\end{center}}
\newcommand{\bdm}{\begin{displaymath}}
\newcommand{\edm}{\end{displaymath}}
\newcommand{\beq}{\begin{equation}}
\newcommand{\eeq}{\end{equation}}
\newcommand{\bfl}{\begin{flushleft}}
\newcommand{\efl}{\end{flushleft}}
\newcommand{\bt}{\begin{tabbing}}
\newcommand{\et}{\end{tabbing}}
\newcommand{\beqn}{\begin{align}}
\newcommand{\eeqn}{\end{align}}
\newcommand{\beqs}{\begin{align*}} 
\newcommand{\eeqs}{\end{align*}}  
\newtheorem{assumption}{Assumption}
\newcommand{\HRule}{\rule{\linewidth}{0.5mm}}
\newtheorem{lemma}{{Lemma}}
\newtheorem{theorem}{{Theorem}}
\def\tran{^{\mathsf{T}}}
\def\d{{\boldsymbol{d}}}
\def\u{{\boldsymbol{u}}}
\def\v{{\boldsymbol{v}}}
\def\bE{\mathbb{E}}
\def\real{{\mathbb{R}}}
\def\Zint{{\mathchoice{\setbox1=\hbox{\sf Z}\copy1\kern-.75\wd1\box1}
{\setbox1=\hbox{\sf Z}\copy1\kern-.75\wd1\box1}
{\setbox1=\hbox{\scriptsize\sf Z}\copy1\kern-.75\wd1\box1}
{\setbox1=\hbox{\scriptsize\sf Z}\copy1\kern-.75\wd1\box1}}}
\def\hlinewd#1{%
  \noalign{\ifnum0=`}\fi\hrule \@height #1 \futurelet
   \reserved@a\@xhline}
\title{Exact Diffusion for Distributed Optimization and Learning --- Part II: Convergence Analysis}
\author{\IEEEauthorblockN{Kun Yuan, Bicheng Ying, Xiaochuan Zhao,  and Ali H. Sayed \vspace{-0.1cm}}
	
	\IEEEauthorblockA{
	}
	
	\thanks{\scriptsize{K. Yuan and B. Ying are with the Department of Electrical Engineering, University of California, Los Angeles, CA 90095 USA. Email:\{kunyuan, ybc, xiaochuanzhao\}@ucla.edu. X. Zhao is now with Goldman Sachs, NY. A. H. Sayed is with the School of Engineering, Ecole Polytechnique Federale de Lausanne (EPFL), Switzerland. Email: ali.sayed@epfl.ch. This  work was supported in part by NSF grants CCF-1524250 and ECCS-1407712. {A short conference version of the results from Parts I and II appear in the short conference publication \cite{yuan2017eusipco}.}}}
}
\begin{document}
%
\maketitle
\small
\begin{abstract}
Part I of this work \cite{yuan2017exact1} developed the exact diffusion algorithm to remove the bias that is characteristic of distributed solutions for deterministic optimization problems. The algorithm was shown to be applicable to a larger set of combination policies than earlier approaches in the literature. In particular, the combination matrices are not required to be doubly stochastic, which impose stringent conditions on the graph topology and communications protocol. In this Part II, we examine the convergence and stability properties of exact diffusion in some detail and establish its linear convergence rate. We also show that it has a wider stability range than the EXTRA consensus solution, meaning that it is stable for a wider range of step-sizes and can, therefore, attain faster convergence rates. Analytical examples and numerical simulations illustrate the theoretical findings. 
\end{abstract}
\begin{keywords}
distributed optimization, diffusion, consensus, exact convergence, left-stochastic matrix, doubly-stochastic matrix, balanced policy, Perron vector.
\end{keywords}

\section{Introduction and review of Part I\cite{yuan2017exact1}}
\setlength\abovedisplayskip{2.5pt}
\setlength\belowdisplayskip{2.5pt}
\setlength\abovedisplayshortskip{2.5pt}
\setlength\belowdisplayshortskip{2.5pt}
{
For ease of reference, we provide a brief review of the main construction from Part I \cite{yuan2017exact1}. We consider a collection of $N$ networked agents working cooperatively to solve an aggregate optimization problem of the form:
\eq{
	\label{prob-dist}
	w^\star = \argmin_{w\in \RR^M}\quad \cJ^\star(w)=\sum_{k=1}^{N} q_k J_k(w),
}
where the $\{q_k\}$ are positive weighting scalars, each $J_k(w)$ is convex and differentiable, and the aggregate cost $\cJ^\star(w)$ is strongly-convex. {When $q_1 \cdots = q_N$, problem \eqref{prob-dist} reduces to 
\eq{
	\label{prob-consensus}
	w^o = \argmin_{w\in \RR^M}\quad \cJ^o(w)=\sum_{k=1}^{N} J_k(w).
}
Problems of the type \eqref{prob-dist}--\eqref{prob-consensus} find applications in a wide range of areas including including wireless sensor networks \cite{braca2008running}, distributed adaptation
and estimation strategies \cite{sayed2014adaptation,sayed2014adaptive}, distributed statistical
learning\cite{chen2015dictionary,chouvardas2012sparsity} and clustering \cite{zhao2015distributed}.

{Various algorithms have been proposed to solve problem \eqref{prob-consensus} such as \cite{dimakis2010gossip,sardellitti2010fast,kar2011convergence,yuan2016convergence,mota2013d,tsianos2012push,xi2015linear,shi2014linear,mokhtari2015dqm,shi2015extra,nedich2016achieving,xu2015augmented,nedic2016geometrically}.  These algorithms either employ doubly-stochastic or right-stochastic combination matrices.} 
 In Part I\cite{yuan2017exact1}, we derived the exact diffusion strategy \eqref{adapt}--\eqref{combine}.
 \begin{table}[h]
 	\noindent \HRule\\
 	\noindent \textbf{\footnotesize Algorithm 1} {\footnotesize (Exact diffusion strategy for agent $k$)} \vspace{-2mm}\\ 
 	\HRule\\
 	\noindent \textbf{\vspace{0mm}{\scriptsize Setting:} 
 	}
 	{\scriptsize Let $\tA=(I_{N}+A)/2$, and $w_{k,\hspace{-0.2mm}-\hspace{-0.3mm}1}$ arbitrary. {\color{black}Set $\psi_{k,-1} = w_{k,-1}$.}}
 	\noindent \textbf{\vspace{0mm}{\scriptsize {\color{white}Setting:}} 
 	}
 	{\scriptsize {\color{black}Let $\mu_k = q_k \mu_o/p_k$}. }

 	\noindent 
 	\textbf{\vspace{-4mm} \hspace{-1.3mm}{\scriptsize Repeat for $i=0,1,2,\cdots$}}\\
 	{\footnotesize
 		\eq{
 			\psi_{k,i} &= w_{k,i-1} - \mu_k \grad J_k(w_{k,i-1}), \hspace{5mm} \mbox{\footnotesize (adaptation)} \label{adapt}\\
 			\phi_{k,i} &= \psi_{k,i} + w_{k,i-1}  - \psi_{k,i-1}, \hspace{8mm} \mbox{\footnotesize (correction)} \label{correct}\\
 			w_{k,i} &= \sum_{\ell\in \cN_k} \overline{a}_{\ell k} \phi_{\ell,i}. \hspace{2.32cm} \mbox{\footnotesize (combination)}  \label{combine}
 		}}
 		\HRule
 	\end{table}
%
%
%
 The matrix $A=[a_{\ell k}]$ in the table refers to the combination policy with $a_{\ell k}\geq 0$ denoting the weight that scales the data arriving from agent $\ell$ to agent $k$. The matrix $A$ is not required to be symmetric but is left-stochastic, i.e.,
\eq{
A\tran \mathds{1}_N = \mathds{1}_N
}
where $\mathds{1}_N$ refers to a column vector with all entries equal to one.  It is assumed that the network graph is strongly-connected, which translates into a primitive matrix $A$. This implies, in view of the Perron-Frobenius theorem \cite{sayed2014adaptation}, that there exists a Perron vector $p$  satisfying
\eq{
Ap=p,\;\;\;\mathds{1}_N\tran p=1,\;\;\;p \succ0.
}
Furthermore, it was argued in Eq.(11) of Part I\cite{yuan2017exact1} that given $q$ and $A$ (and hence $p$), one can always adjust $\{\mu_k\}_{k=1}^N$ and find a positive constant $\beta$ such that
\eq{\label{q-A}
	q = \beta \, \mbox{diag}\{\mu_1,\mu_2,\cdots,\mu_N\}p.\vspace{-4mm}
}
Let $P=\mbox{\rm diag}(p)$, the matrix $A$ is said to be {\em balanced} if 
\eq{\label{local-balance}
	PA\tran = AP.
}
We showed in Part I\cite{yuan2017exact1} that balanced left-stochastic matrices are common in practice and that condition \eqref{local-balance} endows $A$
%
%
with several useful properties that 
enabled the derivation of the above exact diffusion strategy, and which will be used again in this work to examine its convergence properties. 

The structure of the exact diffusion strategy listed in \eqref{adapt}--\eqref{combine} is very similar to the standard diffusion implementation \cite{sayed2014adaptive,sayed2014adaptation,chen2012diffusion,chen2015learning}, with the only difference being the addition of an extra correction step between the adaptation and combination steps. We can rewrite the recursions \eqref{adapt}--\eqref{combine} in an aggregate form by resorting to a block vector notation. First, we introduce the eigen-decomposition 
\eq{\label{syud}
	(P - AP)/{2} = U \Sigma U\tran,
}
where $\Sigma \in \RR^{N\times N}$ is a non-negative diagonal matrix and $U\in \RR^{N\times N}$ is an orthogonal matrix. Next, we select $V$ to be the symmetric square-root matrix defined as
\eq{\label{V-defi}
	V \define U \Sigma^{1/2} U\tran \in \RR^{N\times N},
}
and introduce the quantities: 
\eq{
\tA & = (A + I_N)/2, \quad \tcA  = \tA \otimes I_M,\\
\cP & = P \otimes I_M, \hspace{10mm} \cV = V \otimes I_M, \\
\sw_i &= \col\{w_{1,i}, \cdots, w_{N,i}\}, \\
\sy_i &= \col\{y_{1,i}, \cdots, y_{N,i}\}, \\
\cM &= \diag\{\mu_1 I_M, \cdots, \mu_N I_M\}, \\
\grad \cJ^o(\sw) &= \col\{\grad J_1(w_{1}), \cdots, \grad J_N(w_{N})\},\label{grad-J^o}\\
\grad \cJ^\star(\sw) &= \col\{q_1\grad J_1(w_{1}), \cdots, q_N\grad J_N(w_{N})\}. \label{J-star}
}

\noindent Using these variables, and was already explained in Part I\cite{yuan2017exact1}, the recursions \eqref{adapt}--\eqref{combine} can be rewritten in the following equivalent so-called primal-dual form:
\begin{equation}
\left\{
\begin{aligned}
\sw_i &= \tcA\tran \Big(\sw_{i\hspace{-0.3mm}-\hspace{-0.3mm}1}\hspace{-0.8mm}-\hspace{-0.8mm}\cM \grad \cJ^o(\sw_{i\hspace{-0.3mm}-\hspace{-0.3mm}1})\Big)\hspace{-0.8mm}-\hspace{-0.8mm}\cP^{-1}\cV \sy_{i-1} \label{zn-1}\\
\sy_i &= \sy_{i-1} + \cV \sw_i
\end{aligned}
\right.
\end{equation}
For the initialization, we set $y_{-1}=0$ and $\sw_{-1}$ to be any value, and hence for $i=0$ we have
\begin{equation}
\left\{
\begin{aligned}
\sw_0 &= \tcA\tran \Big(\sw_{\hspace{-0.3mm}-\hspace{-0.3mm}1}\hspace{-0.8mm}-\hspace{-0.8mm}\cM \grad \cJ^o(\sw_{\hspace{-0.3mm}-\hspace{-0.3mm}1})\Big), \label{zn-0}\\
\sy_0 &= \cV \sw_0. 
\end{aligned}
\right.
\end{equation}
The following auxiliary lemma, which was established in Part I\cite{yuan2017exact1}, is used in the subsequent convergence analysis.
\vspace{-1mm}
\begin{lemma}[\sc Nullspace of $V$]\label{lm:null-V}
	It holds that
	\eq{\label{null-V}
		\mathrm{null}(V) &= \mathrm{null}(P-AP) = \mathrm{span}\{\mathds{1}_N\}, \\
		\mathrm{null}( \cV ) &=  \mathrm{null}( \cP - \cA \cP ) = \mathrm{span}\{ \mathds{1}_N \otimes I_M \}.\label{xcn987-2}
	}
	\qd
\end{lemma}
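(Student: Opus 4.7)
The plan is to prove the two chains of equalities in order: first the scalar version \eqref{null-V}, then lift to the block version \eqref{xcn987-2} via Kronecker product identities.

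\textbf{Step 1: Showing $\mathrm{null}(P-AP) = \mathrm{span}\{\mathds{1}_N\}$.} The inclusion $\supseteq$ is immediate because $(P-AP)\mathds{1}_N = p - Ap = p - p = 0$, using $P\mathds{1}_N = p$ and the Perron identity $Ap=p$. For the reverse inclusion, suppose $(P-AP)x = 0$ and set $y \defeq Px$. Then $Py = APx$, i.e.\ $y = Ay$. Since $A$ is primitive with $Ap=p$, the Perron--Frobenius theorem guarantees that the eigenspace associated with eigenvalue $1$ is one-dimensional and spanned by $p$. Hence $y = cp$ for some scalar $c$, and since $P$ is diagonal with strictly positive entries, $x = P^{-1}y = c\,P^{-1}p = c\mathds{1}_N$.

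\textbf{Step 2: Showing $\mathrm{null}(V) = \mathrm{null}(P-AP)$.} From the eigen-decomposition \eqref{syud} and the definition \eqref{V-defi}, a direct computation gives
\eq{
V^2 \;=\; U\Sigma^{1/2}U\tran U \Sigma^{1/2} U\tran \;=\; U\Sigma U\tran \;=\; (P-AP)/2. \nonumber
}
Since $V$ is symmetric, $Vx=0 \iff \Sigma^{1/2}U\tran x = 0 \iff \Sigma U\tran x = 0 \iff V^2 x = 0$, so $\mathrm{null}(V) = \mathrm{null}(V^2) = \mathrm{null}(P-AP)$. (One should first note that $(P-AP)/2$ is positive semidefinite so that $\Sigma \succeq 0$ and the square root is well-defined; this is covered by the balanced property $AP = PA\tran$ together with the spectral facts of $A$ already invoked in Part I.)

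\textbf{Step 3: Lifting to $\cV = V\otimes I_M$.} Using the Kronecker identity $(V\otimes I_M)(P\otimes I_M) = (VP)\otimes I_M$ and similarly for $\cA\cP = (AP)\otimes I_M$, we get $\cP - \cA\cP = (P-AP)\otimes I_M$ and $\cV^2 = V^2 \otimes I_M = ((P-AP)/2) \otimes I_M$. Writing an arbitrary $z \in \RR^{NM}$ as a block vector $z = \col\{z_1,\ldots,z_N\}$ and comparing coordinates, $\cV z = 0$ forces each scalar stack $(z_1^{(m)},\ldots,z_N^{(m)})\tran$ to lie in $\mathrm{null}(V) = \mathrm{span}\{\mathds{1}_N\}$ for every $m=1,\ldots,M$. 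Hence $z_1 = \cdots = z_N = c$ for some $c\in\RR^M$, i.e.\ $z = \mathds{1}_N \otimes c$, which is precisely $\mathrm{span}\{\mathds{1}_N \otimes I_M\}$. The same argument applied to $\cP-\cA\cP$ gives the middle equality.

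The only subtle point is Step 1's converse direction, where one must carefully use the change of variable $y = Px$ to reduce the equation $(P-AP)x = 0$ to an eigenvalue equation for $A$; everything else is mechanical once primitivity of $A$ and the square-root construction of $V$ are in hand.
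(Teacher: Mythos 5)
Your proof is correct: the reduction $\mathrm{null}(V)=\mathrm{null}(V^2)=\mathrm{null}(P-AP)$ via the symmetric square root, the Perron--Frobenius argument under the change of variable $y=Px$, and the coordinate-wise Kronecker lifting are exactly the standard route, and this Part II does not reprove the lemma anyway (it is imported from Part I, where the argument proceeds along the same lines). The one point you rightly flag but only defer --- that $(P-AP)/2$ is symmetric positive semidefinite so that \eqref{syud} makes sense --- indeed follows from the balance condition $PA\tran=AP$, which gives symmetry and shows $P^{-1/2}AP^{1/2}$ is symmetric with spectrum in $[-1,1]$; stating that one line explicitly would make the proof self-contained.
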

\vspace{-2mm}
In this article, we will establish the linear convergence of exact diffusion using the primal-dual form \eqref{zn-1}. This is a challenging task due to the coupled dynamics among the agents. To facilitate the analysis, we first apply a useful coordinate transformation and characterize the error dynamics in this transformed domain. Then, we show analytically that exact diffusion is stable, converges linearly, and has a wider stability range than EXTRA consensus strategy\cite{shi2015extra}. We also compare the performance of exact diffusion to other existing linearly convergent algorithms besides EXTRA, such as DIGing\cite{nedich2016achieving} and Aug-DGM \cite{xu2015augmented,nedic2016geometrically} with numerical simulations.
}
\vspace{-2mm}

\section{Convergence of Exact Diffusion}\label{sec-convergence}
The purpose of the analysis in this section is to establish the exact convergence of $w_{k,i}$ to $w^{\star}$, for all agents in the network, and to show that this convergence attains an exponential rate. 

\vspace{-4mm}
\subsection{The Optimality Condition}\label{subsec-optimality}
\begin{lemma}[\sc Optimality Condition]\label{lm-opt-cond-1}
	If condition \eqref{q-A} holds and block vectors $(\sw^\star, \sy^\star)$ exist that satisfy:
	\eq{
		\tcA\tran \cM \grad \cJ^o(\sw^\star) + \cP^{-1}\cV \sy^\star & = 0, \label{KKT-1-1} \\
		\cV \sw^\star &= 0. \label{KKT-2-2}
	}
	then it holds that the block entries of ${\sw}^{\star}$ satisfy: 
	\eq{\label{7280m}
	w_1^\star=w_2^\star=\cdots=w_N^\star=w^\star，
	}
	where $w^\star$ is the unique solution to problem \eqref{prob-dist}.
\end{lemma}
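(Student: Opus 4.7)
The plan is to split the argument in two parts: first use \eqref{KKT-2-2} to show that $\sw^\star$ is a consensus vector, and then use \eqref{KKT-1-1} to identify the common value with the unique minimizer of \eqref{prob-dist}.

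First I would handle \eqref{KKT-2-2} directly with Lemma \ref{lm:null-V}. Since $\cV \sw^\star = 0$, the identification $\mathrm{null}(\cV) = \mathrm{span}\{\mathds{1}_N \otimes I_M\}$ gives $\sw^\star = \mathds{1}_N \otimes w^\star$ for some $w^\star \in \RR^M$. Equivalently, $w_1^\star=\cdots=w_N^\star=w^\star$, which is the first half of \eqref{7280m}. It remains to show that this common $w^\star$ actually solves \eqref{prob-dist}.

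Next I would manipulate \eqref{KKT-1-1} so as to expose the global gradient $\sum_k q_k \nabla J_k$. The key is the balance relation \eqref{local-balance}, which gives $\tA\tran = \bigl(I_N+A\tran\bigr)/2 = P^{-1}\tA P$, and hence $\tcA\tran = \cP^{-1}\tcA\,\cP$. Left-multiplying \eqref{KKT-1-1} by $\cP$ therefore produces
\eq{
\tcA\,\cP\cM\,\grad\cJ^o(\sw^\star) + \cV \sy^\star = 0.
}
Using \eqref{q-A} entrywise, $p_k\mu_k = q_k/\beta$, so that $\cP\cM\,\grad\cJ^o(\sw^\star) = (1/\beta)\grad\cJ^\star(\sw^\star)$ by comparing \eqref{grad-J^o} and \eqref{J-star}. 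Now left-multiply by $\mathds{1}_N\tran \otimes I_M$. Because $A$ is left-stochastic we have $\mathds{1}_N\tran \tA = \mathds{1}_N\tran$, while Lemma \ref{lm:null-V} combined with the symmetry of $V$ gives $\mathds{1}_N\tran V = 0$. Both the $\tcA$-factor collapses to the identity and the dual term $\cV\sy^\star$ is annihilated, leaving
\eq{
\tfrac{1}{\beta}\sum_{k=1}^{N} q_k \,\grad J_k(w_k^\star) = 0.
}
Substituting $w_k^\star = w^\star$ from the first part yields $\grad \cJ^\star(w^\star) = 0$, which is the optimality condition for the strongly convex aggregate \eqref{prob-dist}. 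Strong convexity then guarantees that $w^\star$ is the unique minimizer, completing the proof.

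The main obstacle is the algebraic manipulation of $\tcA\tran$: left-stochasticity alone is not enough to remove it after left-multiplying by $\mathds{1}_N\tran \otimes I_M$, and it is critical to first convert $\tcA\tran$ to $\cP^{-1}\tcA\cP$ via the balance condition \eqref{local-balance} so that $\cP$ absorbs into $\cM$ through \eqref{q-A}. Once this pivot is made, everything else is a direct consequence of Lemma \ref{lm:null-V} and strong convexity.
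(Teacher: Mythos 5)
Your proposal is correct and follows essentially the same route as the paper's proof: consensus of the blocks of $\sw^\star$ via the nullspace characterization in Lemma \ref{lm:null-V}, then left-multiplication of \eqref{KKT-1-1} by $\cP$ and by $\cI\tran=\mathds{1}_N\tran\otimes I_M$ to annihilate the dual term and reduce to $\sum_{k}q_k\grad J_k(w^\star)=0$ via \eqref{q-A}. The only cosmetic difference is that you collapse the combination matrix using $\cP\tcA\tran=\tcA\cP$ together with $\mathds{1}_N\tran\tA=\mathds{1}_N\tran$, whereas the paper invokes the equivalent Perron identity $p\tran\tA\tran=p\tran$ directly.
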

\begin{proof}
	From \eqref{xcn987-2}, we have
	\eq{\label{xcn78}
	\cV \sw^\star = 0 \Longleftrightarrow w_1^\star=w_2^\star=\cdots=w_N^\star.
	}
	Next we check $w_k^\star = w^\star$.
	Since $\cP > 0$, condition \eqref{KKT-1-1} is equivalent to
	\eq{
	\cP \tcA\tran \cM \grad \cJ^o(\sw^\star) + \cV \sy^\star  = 0. \label{KKT-1-2}
	}
	Let $\cI=\mathds{1}_N \otimes I_M \in \RR^{MN\times M}$. Multiplying by $\cI\tran$ gives
	\eq{
	0=&\ \cI\tran \big(\cP \tcA\tran  \cM \grad \cJ^o(\sw^\star) + \cV \sy^\star\big) \overset{(a)}{=}\cI\tran \cP \tcA\tran \cM \grad \cJ^o(\sw^\star)\nnb
	=&\ {\sum_{k=1}^{N} p_k\mu_k \grad J_k(w_k^\star) \overset{\eqref{q-A}}{=} \frac{1}{\beta} \sum_{k=1}^{N} q_k \grad J_k(w^\star_k)},
	}
	where equality (a) holds because $\cV$ is symmetric and \eqref{xcn987-2}. Since $\beta \neq 0$, we conclude that $\sum_{k=1}^{N} q_k \grad J_k(w^\star_k)=0$,
	which shows that the entries $\{w_{k}^{\star}\}$, which are identical, must coincide with the minimizer $w^{\star}$ of \eqref{prob-dist}. 
\end{proof}
Observe that since ${\cJ}^{\star}(w)$ is assumed strongly-convex, then the solution to problem \eqref{prob-dist}, $w^\star$, is unique, and hence $\sw^\star$ is also unique. However, since $\cV$ is rank-deficient, there can be multiple solutions ${\sy}^{\star}$ satisfying \eqref{7280m}. Using an argument similar to \cite{shi2014linear,shi2015extra}, we can show that among all possible ${\sy}^{\star}$, there is a unique solution ${\sy}^{\star}_o$ lying in the column span of ${\cV}$.
\begin{lemma}[\sc Particular solution pair]\label{sy in V range sapce}
	When condition \eqref{q-A} holds and $\cJ^{o}(w)$ defined by \eqref{prob-consensus} is strongly-convex, there exists a unique pair of variables $(\sw^\star, \sy^\star_o)$, in which $\sy^\star_o$ lies in the range space of $\cV$, that satisfies conditions \eqref{KKT-1-1}-\eqref{KKT-2-2}.  
\end{lemma}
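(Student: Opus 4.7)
The plan is to fix $\sw^\star=\mathds{1}_N\otimes w^\star$ (which is already pinned down uniquely by Lemma \ref{lm-opt-cond-1}) and then reduce the statement to an existence/uniqueness question for the linear equation
\eq{\label{plan-eq}
\cV\sy^\star \;=\; -\,\cP\tcA\tran\cM\,\grad\cJ^o(\sw^\star)
}
restricted to $\sy^\star\in\range(\cV)$. Because $V$ (and hence $\cV$) is symmetric by construction in \eqref{V-defi}, we have the orthogonal decomposition $\RR^{MN}=\range(\cV)\oplus\mathrm{null}(\cV)$, and $\cV$ acts as a bijection from $\range(\cV)$ onto itself. So the whole lemma boils down to verifying that the right-hand side of \eqref{plan-eq} belongs to $\range(\cV)$.

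First I would separate existence and uniqueness. For uniqueness, suppose two candidates $\sy^\star_{o,1},\sy^\star_{o,2}\in\range(\cV)$ both satisfy \eqref{plan-eq}; then their difference lies in $\range(\cV)\cap\mathrm{null}(\cV)$, which is trivial by symmetry of $\cV$. This gives $\sy^\star_{o,1}=\sy^\star_{o,2}$ at essentially no cost. For existence, the key is to show that the right-hand side of \eqref{plan-eq} is orthogonal to $\mathrm{null}(\cV)=\mathrm{span}\{\mathds{1}_N\otimes I_M\}$ established in Lemma \ref{lm:null-V}; once that is verified, existence of $\sy^\star_o\in\range(\cV)$ is automatic.

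The orthogonality check is exactly the computation already carried out inside the proof of Lemma \ref{lm-opt-cond-1}: multiplying $-\cP\tcA\tran\cM\,\grad\cJ^o(\sw^\star)$ on the left by $\cI\tran=(\mathds{1}_N\otimes I_M)\tran$, and using $\cI\tran\cP\tcA\tran=(\mathds{1}\tran P\tcA\tran)\otimes I_M=(p\tran)\otimes I_M$ (which follows from $\tcA$ being left-stochastic in the balanced/Perron sense and $P\mathds{1}=p$), reduces the expression to $-\frac{1}{\beta}\sum_{k=1}^{N}q_k\grad J_k(w^\star)$. Strong convexity of $\cJ^\star$ and the first-order optimality of $w^\star$ for \eqref{prob-dist} make this sum vanish, so the right-hand side of \eqref{plan-eq} lies in $\mathrm{null}(\cV)^\perp=\range(\cV)$, and we can invert $\cV$ on that subspace to obtain $\sy^\star_o$.

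The only delicate step is the orthogonality verification — one must be careful to invoke condition \eqref{q-A} when passing from $p_k\mu_k$ to $q_k/\beta$, and to remember that $\sw^\star$ has identical blocks so that $\grad J_k(w_k^\star)=\grad J_k(w^\star)$. Everything else is linear algebra in the presence of the symmetric, rank-deficient operator $\cV$, so I would present the argument in three short pieces: (i) reduce to \eqref{plan-eq} via Lemma \ref{lm-opt-cond-1}; (ii) prove uniqueness within $\range(\cV)$ by the symmetry-based orthogonal splitting; (iii) prove existence by the $\cI\tran$-annihilation calculation combined with the optimality of $w^\star$.
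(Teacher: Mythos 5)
Your proposal is correct and follows essentially the same route as the paper: existence is obtained by showing the right-hand side of the reduced equation $\cV\sy^\star=-\cP\tcA\tran\cM\grad\cJ^o(\sw^\star)$ is annihilated by $\cI\tran$ (hence orthogonal to $\mathrm{null}(\cV)$ and in $\range(\cV)$ by symmetry, using \eqref{q-A} and the optimality of $w^\star$), and uniqueness follows from the invertibility of $\cV$ on its range. The only cosmetic difference is in the uniqueness step, where the paper writes $\sy_o^\star=\cV\sx_o^\star$ and argues $\cV^2(\sx_o^\star-\sx_1^\star)=0\Rightarrow\cV(\sx_o^\star-\sx_1^\star)=0$, while you invoke $\range(\cV)\cap\mathrm{null}(\cV)=\{0\}$ directly --- both rest on the symmetry of $\cV$.
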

\begin{proof}
	{First we prove that there always exist some block vectors $(\sw^\star, \sy^\star)$ satisfying \eqref{KKT-1-1}--\eqref{KKT-2-2}. Indeed, when $\cJ^o(w)$ is strongly-convex, the solution to problem \eqref{prob-dist}, $w^\star$, exists and is unique. Let ${\sw}^{\star}=\mathds{1}_{N}\otimes w^{\star}$. We conclude from Lemma \ref{lm:null-V} that condition \eqref{KKT-2-2} holds. Next we check whether there exists some $\sy^\star$ such that 
		\eq{
		\cP^{-1}\cV \sy^\star = - \tcA\tran \cM \grad \cJ^o(\sw^\star),
		}
		or equivalently, 
		\eq{\label{k29}
			\cV \sy^\star & = - \cP \tcA\tran \cM \grad \cJ^o(\sw^\star) \nnb
			& = - \tcA \cP \cM \grad \cJ^o(\sw^\star) = -\frac{1}{\beta}\tcA \grad \cJ^\star(\sw^\star),
		}
		{where the last equality holds because
		\eq{
		\cP \cM \grad \cJ^o(\sw^\star)&=
		\ba{c}
		\mu_1 p_1 \grad J_1(w^\star)\\
		\vdots\\
		\mu_N p_N \grad J_N(w^\star)
		\ea \overset{\eqref{q-A}}{=}
		\ba{c}
		\frac{q_1}{\beta} \grad J_1(w^\star)\\
		\vdots\\
		\frac{q_N}{\beta}  \grad J_N(w^\star)
		\ea \nnb
		&\overset{\eqref{J-star}}{=} \frac{1}{\beta}\grad \cJ^\star(\sw^\star),
		}	
		}
		\hspace{-1.2mm}To prove the existence of $\sy^\star$, we need to show that $\tcA \grad \hspace{-1mm} \cJ^\star\hspace{-0.5mm}(\hspace{-0.5mm}\sw^\star\hspace{-0.5mm})$  lies in $\mathrm{range}(\cV)$. {Indeed, observe that 
		\eq{\label{23nsd}
		\hspace{-2mm}\cI\tran \tcA \grad \cJ^\star(\sw^\star) = \cI\tran \grad \cJ^\star(\sw^\star) \overset{(a)}{=} \sum_{k=1}^{N}q_k \grad J_k(w^\star) = 0
		}
		where the equality (a) holds because of equation \eqref{J-star}. Equality \eqref{23nsd} implies that $\tcA \grad \cJ^\star(\sw^\star)$ is orthogonal to $\mathrm{span}(\cI)$, i.e., $\mathrm{span}(\mathds{1}_N \otimes I_M)$. With \eqref{xcn987-2} we have \vspace{1mm}
		\eq{
		\tcA \grad \cJ^\star(\sw^\star) \perp \mathrm{null}(\cV) 
		\Leftrightarrow &\ \tcA \grad \cJ^\star(\sw^\star) \in \mathrm{range}(\cV\tran) \nnb
		\Leftrightarrow &\ \tcA \grad \cJ^\star(\sw^\star) \in \mathrm{range}(\cV),\vspace{1.5mm}
		}
		where the last ``$\Leftrightarrow$" holds because $\cV$ is symmetric.}

%
	}
	
	We now establish the existence of the unique pair $({ \sw}^{\star}, {\sy}_o^{\star})$. Thus, let $({\sw}^{\star},{\sy}^{\star})$ denote an arbitrary solution to \eqref{7280m}. Let further ${\sy}_o^{\star}$ denote the projection of ${\sy}^{\star}$ onto the column span of ${\cV}$. It follows that $\cV(\sy^\star - \sy^\star_o) = 0$ and, hence, $\cV\sy^\star = \cV \sy_o^\star$. Therefore, the pair $(\sw^\star, \sy_o^\star)$ also satisfies conditions \eqref{KKT-1-1}-\eqref{KKT-2-2}. 
	
	Next we verify the uniqueness of $\sy_o^\star$ by contradiction. Suppose there is a different $\sy_1^\star$ lying in $\cR(\cV)$ that also satisfies  condition \eqref{KKT-1-1}. We let $\sy^\star_o=\cV \sx^\star_o$ and $\sy^\star_1=\cV \sx^\star_1$. 
	Substituting $\sy^\star_o$ and $\sy^\star_1$ into condition \eqref{KKT-1-1}, we have
	\eq{
		\tcA\tran \cM \grad \cJ^o(\sw^\star) + \cP^{-1}\cV^2 \sx_o^\star & = 0, \label{ns-1}\\
		\tcA\tran \cM \grad \cJ^o(\sw^\star) + \cP^{-1}\cV^2 \sx_1^\star & = 0. \label{ns-2}
	}
	Subtracting \eqref{ns-2} from \eqref{ns-1} and recall $\cP>0$, we have $\cV^2(\sx_o^\star - \sx_1^\star) = 0$, which leads to $\cV(\sx_o^\star - \sx_1^\star) = 0 \Longleftrightarrow \sy_o^\star = \sy_1^\star$. This contradicts the assumption that $\sy_o^\star \neq \sy_1^\star$. 
\end{proof}
Using the above auxiliary results, we will show that $(\sw_i, \sy_i)$ generated through the exact diffusion \eqref{zn-1} will converge exponentially fast to $(\sw^\star, \sy^\star_o)$.

\subsection{Error Recursion}\label{subsec-error-recursion}
Let ${\sw}^{\star}=\mathds{1}_{N}\otimes w^{\star}$, which corresponds to a block vector with $w^{\star}$ repeated $N$ times. Introduce further the error vectors
\eq{\label{error}
\widetilde{\sw}_i={\sw}^{\star}-{\sw}_i,\;\;\;\;
\widetilde{\sy}_i=\sy_o^{\star}-{\sy}_i.
}
The first step in the convergence analysis is to examine the evolution of these error quantities. Multiplying the second recursion of \eqref{zn-1} by $\cV$ from the left gives:
%
%
\eq{
\cV  \sy_i = \cV  \sy_{i-1} + \frac{1}{2}(\cP-\cP \cA) \sw_i. \label{V-tran y}
}
Substituting \eqref{V-tran y} into the first recursion of \eqref{zn-1}, we have
\begin{equation}
	\left\{
	\begin{aligned}
	\tcA\tran \twd_i &\hspace{-0.5mm}=\hspace{-0.5mm} \tcA\tran \Big(\twd_{i-1}\hspace{-0.8mm}+\hspace{-0.8mm}\cM \grad \cJ^o(\sw_{i\hspace{-0.3mm}-\hspace{-0.3mm}1})\Big) \hspace{-0.8mm}+\hspace{-0.8mm} \cP^{-1} \cV  \sy_{i}, \label{ed-1-subtracrt} \\
	\tyd_i &= \tyd  _{i-1} - \cV \sw_i. 
	\end{aligned}
	\right.
\end{equation}
Subtracting optimality conditions \eqref{KKT-1-1}--\eqref{KKT-2-2} from \eqref{ed-1-subtracrt} leads to
\begin{equation}
	\hspace{-0.8mm}\left\{
	\begin{aligned}
	\hspace{-1.5mm}\tcA\tran \twd_i &\hspace{-0.5mm}=\hspace{-0.5mm} \tcA\tran \Big(\hspace{-0.8mm}\twd_{i-1}\hspace{-0.8mm}+\hspace{-0.8mm}\cM \big[\grad \cJ^o(\sw_{i\hspace{-0.3mm}-\hspace{-0.3mm}1})\hspace{-0.8mm}-\hspace{-0.8mm}\grad \cJ^o(\sw^\star)\big]\hspace{-0.8mm}\Big) \hspace{-0.8mm}-\hspace{-0.8mm} \cP^{-1} \cV  \tyd_{i}, \label{ed-1-subtracrt-1} \\
	\hspace{-1.5mm}\tyd_i &= \tyd_{i-1} +\cV \twd_i. 
	\end{aligned}
	\right.
\end{equation}
Next we examine the difference $\grad \cJ^o(\sw_{i\hspace{-0.3mm}-\hspace{-0.3mm}1})-\grad \cJ^o(\sw^\star)$. To begin with, we get from \eqref{grad-J^o} that 
\eq{\label{237sdb}
\hspace{-2mm}\grad \cJ^o(\sw_{i-1}) \hspace{-0.8mm}-\hspace{-0.8mm} \grad \cJ^o(\sw^\star) \hspace{-1mm}=\hspace{-1mm}
\ba{c}
\hspace{-2mm}\grad J_1(w_{1,i-1}) \hspace{-0.8mm}-\hspace{-0.8mm} \grad J_1(w^\star)\hspace{-2mm}\\
\hspace{-2mm}\vdots\hspace{-2mm}\\
\hspace{-2mm}\grad J_N(w_{N,i-1}) \hspace{-0.8mm}-\hspace{-0.8mm} \grad J_N(w^\star)\hspace{-2mm}
\ea
}
When $\grad J_k(w)$ is twice-differentiable (see Assumption \ref{ass-lip}), we can appeal to the mean-value theorem from Lemma D.1 in \cite{sayed2014adaptation}, which allows us to express each difference in \eqref{237sdb} in the following integral form in terms of Hessian matrices for any~ $k=1,2,\ldots,N$:
\eq{
 \grad J_k(w_{k,i-1}) \hspace{-1mm} - \hspace{-1mm}\grad J_k(w^\star)  \hspace{-0.8mm}=\hspace{-0.8mm}  -\Big(\hspace{-1mm} \int_0^1 \hspace{-2mm}\grad^2 \hspace{-1mm} J_k\hspace{-0.5mm} \big(\hspace{-0.5mm} w^\star \hspace{-1mm}-\hspace{-0.8mm} r \widetilde{w}_{k,i-1}\big)dr \hspace{-1mm}\Big)\widetilde{w}_{k,i-1}. \nn
}
If we let 
\eq{\label{H_k_i-1}
H_{k,i-1} \hspace{-1.5mm}\define \hspace{-1.5mm} \int_0^1 \grad^2 J_k\big(w^\star \hspace{-0.5mm}-\hspace{-0.5mm} r\widetilde{w}_{k,i-1}\big)dr \in \RR^{M\times M},
}
and introduce the block diagonal matrix:
\eq{\label{H_i-1}
\cH_{i-1} \hspace{-1.5mm}\define \hspace{-1.5mm} \mathrm{diag}\{H_{1,i-1},H_{2,i-1},\cdots,H_{N,i-1}\},
}
then we can rewrite \eqref{237sdb} in the form:
\eq{
\grad \cJ^o(\sw_{i-1}) - \grad \cJ^o(\sw^\star) = - \cH_{i-1} \twd_{i-1}.\label{xcnh}
}
Substituting into \eqref{ed-1-subtracrt-1} we get
\begin{equation}
	\left\{
	\begin{aligned}
	\tcA\tran \twd_i &\hspace{-0.5mm}=\hspace{-0.5mm} \tcA\tran (I_{MN}-\cM\cH_{i-1})\twd_{i-1} - \cP^{-1} \cV \tyd_{i}, \label{ed-1-subtracrt-1-1} \\
	\tyd_i &= \tyd_{i-1} +\cV \twd_i. 
	\end{aligned}
	\right.
\end{equation}
which is also equivalent to
\eq{\label{xcnh09}
&\ \ba{cc}
\tcA\tran & \cP^{-1}\cV \\
-\cV & I_{MN}
\ea
\ba{c}
\twd_i\\
\tyd_i
\ea \nnb
=&\ 
\ba{cc}
\tcA\tran (I_{MN}-\cM\cH_{i-1}) & 0\\
0 & I_{MN}
\ea
\ba{c}
\twd_{i-1}\\
\tyd_{i-1}
\ea.
}
Using the relations $\tcA\tran = \frac{I_{MN} + \cA\tran}{2}$ and $\cV^2=\frac{\cP - \cP\cA\tran}{2}$, it is easy to verify that
\eq{\label{nxcm987}
\ba{cc}
\tcA\tran & \cP^{-1}\cV \\
-\cV & I_{MN}
\ea^{-1} = 
\ba{cc}
I_{MN} &  -\cP^{-1}\cV \\
\cV & I_{MN}-\cV \cP^{-1} \cV
\ea.
}
Substituting into \eqref{nxcm987} gives
\eq{
\ba{c}
\hspace{-1mm}\twd_i\hspace{-1mm}\\
\hspace{-1mm}\tyd_i\hspace{-1mm}
\ea
&\hspace{-1mm}=\hspace{-1mm}
\ba{cc}
\hspace{-2mm}\tcA\tran (I_{MN}-\cM\cH_{i-1}) &  -\cP^{-1}\cV \hspace{-2mm}\\
\hspace{-2mm}\cV\tcA\tran (I_{MN}-\cM\cH_{i-1}) & I_{MN}-\cV \cP^{-1} \cV\hspace{-2mm}
\ea
\ba{c}
\hspace{-1mm}\twd_{i-1}\hspace{-1mm}\\
\hspace{-1mm}\tyd_{i-1}\hspace{-1mm}
\ea.
}
That is, the error vectors evolve according to:
\eq{
\boxed{	
\ba{c}
\hspace{-1mm}\twd_i\hspace{-1mm}\\
\hspace{-1mm}\tyd_i\hspace{-1mm}
\ea = (\cB - \cT_{i-1})\ba{c}
\hspace{-1mm}\twd_{i-1}\hspace{-1mm}\\
\hspace{-1mm}\tyd_{i-1}\hspace{-1mm}
\ea} \label{error-recursion}
}
where
\eq{
\cB&\define 
\ba{cc}
\tcA\tran &  -\cP^{-1}\cV \\
\cV\tcA\tran & I_{MN}-\cV \cP^{-1} \cV
\ea, \\
\cT_{i}&\define 
\ba{cc}
\tcA\tran\cM\cH_{i} &  0 \\
\cV\tcA\tran\cM\cH_{i} & 0
\ea.\label{T-defi}
}
Relation \eqref{error-recursion} is the error dynamics for the exact diffusion algorithm. We next examine its convergence properties.  

\vspace{-2mm}
\subsection{Proof of Convergence} \label{sec-convergence-proof}
We first introduce a common assumption.
\begin{assumption}[\sc Conditions on cost functions]
	\label{ass-lip} 
	Each $J_k(w)$ is twice differentiable, and its Hessian matrix satisfies 
	\eq{\label{xzh2300}
	\grad^2 J_k(w) \le \delta I_M. 
	}
	Moreover, there exists at least one agent $k_o$ such that $J_{k_o}(w)$ is $\nu$-strongly convex, i.e. 
	\eq{\label{xcn23987}
	\grad^2 J_{k_o}(w) > \nu I_M.
	}
%
	\rightline \qed
\end{assumption}
{\color{black}Note that when $J_k(w)$ is twice differentiable, condition \eqref{xzh2300} is equivalent to requiring each $\grad J_k(w)$ to be $\delta$-Lipschitz continuous \cite{sayed2014adaptation}. In addition,}  
condition \eqref{xcn23987} ensures the strong convexity of $\cJ^o(w)$ and ${\cJ}^{\star}(w)$, and the uniqueness of 
$w^o$ and $w^{\star}$. It follows from \eqref{xzh2300}--\eqref{xcn23987} and the definition \eqref{H_k_i-1} that 
%
\eq{\label{H-properties}
H_{k,i-1} \le \delta I_M,\ \forall k\quad \mbox{and}\quad
H_{k_o,i-1}  \ge \nu I_M.
}

The direct convergence analysis of recursion \eqref{error-recursion} is challenging. To facilitate the analysis, we identify a convenient change of basis and transform \eqref{error-recursion} into another equivalent form that is easier to handle. To do that, we first let
\eq{
B\define 
\ba{cc}
\tA\tran & - P^{-1}V \\
V\tA\tran & I_N - VP^{-1}V 
\ea\in \RR^{2N\times 2N}.
}
It holds that $\cB=B\otimes I_M$. In the following lemma we introduce a decomposition for matrix $B$ that will be fundamental to the subsequent analysis.
\begin{lemma}[\sc Fundamental Decomposition]\label{lm-B-decomposition}
	The matrix $B$ admits the following eigendecomposition
	\eq{
	B &= X D X^{-1}, \label{B-deco-0}
	}
	where 
	\eq{
	D=\ba{ccc}
	I_2 & \vline &0 \\
	\hline
	0 & \vline & D_1
	\ea，
	}
	and $D_1\in \RR^{(2N-2)\times (2N-2)}$ is a diagonal matrix with complex entries. The magnitudes of the diagonal entries satisfy
	\eq{\label{uwehn}
		&\hspace{-3mm} |D_1(2k\hspace{-0.8mm}-\hspace{-0.8mm}3,2k\hspace{-0.8mm}-\hspace{-0.8mm}3)|=|D_1(2k\hspace{-0.8mm}-\hspace{-0.8mm}2,2k\hspace{-0.8mm}-\hspace{-0.8mm}2)|=\sqrt{\lambda_{k}(\tA)}<1, \nnb
		& \hspace{4.7cm}\forall\ k=2,3,\cdots N.
	}
Moreover,
\eq{\label{X and X_inv}
X = 
\ba{ccc}
R & \vline &X_R
\ea, \quad
X^{-1}=
\ba{c}
L\\
\hline
X_L
\ea,
}
where $X_R\in \RR^{2N\times (2N-2)}$ and $X_L\in \RR^{(2N-2) \times 2N}$, and $R$ and $L$ are given by
\eq{\label{R and L}
R\hspace{-1mm}=\hspace{-1mm}\ba{cc}
\hspace{-2mm}\mathds{1}_N & 0\hspace{-2mm}\\
\hspace{-2mm}0 & \mathds{1}_N\hspace{-2mm}
\ea\hspace{-1mm}\in \hspace{-0.5mm}\RR^{2N\times 2},
L\hspace{-1mm}=\hspace{-1mm}\ba{cc}
\hspace{-1mm}p\tran & 0\hspace{-1mm} \\
\hspace{-1mm}0 & \frac{1}{N}\mathds{1}_N\tran
\ea\in \RR^{2\times 2N}.
}

\end{lemma}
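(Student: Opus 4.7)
The plan is to verify the decomposition in two stages: first the eigenvalue $1$ together with its two prescribed eigenvectors encoded by $R$ and $L$, and then to identify the remaining $2(N-1)$ eigenvalues and eigenvectors by reducing the $2N\times 2N$ spectral problem for $B$ to an $N\times N$ one for $\tA$.

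The first stage is a direct computation. Using $\tA\tran\mathds{1}_{N}=\mathds{1}_{N}$ (because $A$ is left-stochastic), $V\mathds{1}_{N}=0=\mathds{1}_{N}\tran V$ (from Lemma~\ref{lm:null-V} and the symmetry of $V$), $\tA p=p$ (Perron), and the elementary identity $p\tran P^{-1}=\mathds{1}_{N}\tran$, I would check in four short block-matrix products that $BR=R$ and $LB=L$; biorthogonality $LR=I_{2}$ then follows from $p\tran\mathds{1}_{N}=1$ and $\tfrac{1}{N}\mathds{1}_{N}\tran\mathds{1}_{N}=1$. This identifies the eigenvalue $1$ with algebraic and geometric multiplicity at least two and supplies the blocks $R$ and $L$.

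The second stage proceeds by the ansatz that whenever $\tA\tran x=\beta x$ for some eigenvalue $\beta\neq 1$ of $\tA$, a right eigenvector of $B$ at eigenvalue $\lambda$ takes the form $\xi=\bigl(x\tran,\,\tfrac{\lambda}{\lambda-1}(Vx)\tran\bigr)\tran$. The algebraic ingredient that makes the ansatz close is the identity $V^{2}=P(I-\tA\tran)$, which follows immediately from the balanced condition $PA\tran=AP$; substituting $\xi$ into $B\xi=\lambda\xi$ and using $V^{2}x=(1-\beta)Px$ collapses both block rows into the single scalar quadratic
\eq{
\lambda^{2}-2\beta\lambda+\beta=0.
}
Its two roots have product $\beta$ and discriminant $4\beta(\beta-1)$, so provided $\beta\in(0,1)$ they are complex-conjugate with common modulus $\sqrt{\beta}$. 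To certify this range of $\beta$ for $k\ge 2$, I would invoke the balanced condition once more: $P^{1/2}AP^{-1/2}$ is symmetric, hence $A$ has real spectrum; primitivity together with $A$ being left-stochastic then forces $\lambda_{k}(A)\in(-1,1)$ for $k\ge 2$, so $\lambda_{k}(\tA)=(1+\lambda_{k}(A))/2\in(0,1)$. Left eigenvectors of $B$ are constructed symmetrically from left eigenvectors of $\tA$, yielding the rows of $X_{L}$.

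Finally, I would assemble $X=[\,R,\ X_{R}\,]$ and stack $L$ on top of $X_{L}$ to form $X^{-1}$, and argue that the $2N$ collected vectors are linearly independent: eigenvectors associated with distinct magnitudes $\sqrt{\lambda_{k}(\tA)}$ are automatically independent; within a pair $\lambda_{k}^{+}\neq\lambda_{k}^{-}$ because $\beta\neq 0,1$; and for a repeated eigenvalue of $\tA$ one uses a basis of its eigenspace (geometric equals algebraic multiplicity because $\tA$ is similar to a symmetric matrix) to produce the correct number of independent $B$-eigenvectors. The main obstacle I anticipate is precisely this last diagonalizability and bookkeeping step, which must simultaneously exclude Jordan blocks and yield the biorthogonality $X^{-1}X=I_{2N}$; the balanced condition and primitivity of $A$ are exactly what rule out the degenerate cases $\beta\in\{0,1\}$ that would otherwise obstruct the clean diagonal form of $D$.
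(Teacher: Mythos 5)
Your proposal is correct in substance but follows a genuinely different route from the paper. You work eigenvector-by-eigenvector: verify $BR=R$, $LB=L$, $LR=I_2$ directly, then posit the ansatz $\xi=(x\tran,\ \tfrac{\lambda}{\lambda-1}(Vx)\tran)\tran$ for each eigenvector $x$ of $\tA\tran$ with eigenvalue $\beta\neq 1$, using $V^2=P(I_N-\tA\tran)$ (which indeed follows from $V^2=(P-AP)/2$ and $PA\tran=AP$) to collapse both block rows to the quadratic $\lambda^2-2\beta\lambda+\beta=0$. The paper instead conjugates $B$ globally by $\mathrm{diag}(I_N,V')$ with $V'=V+\mathds{1}_Np\tran$ (shown invertible via Lemma~\ref{lm:null-V}), which turns $B$ into a $2\times 2$ block matrix whose blocks are simultaneously diagonalized by the eigenvector matrix of $A$; a permutation then yields $N$ decoupled $2\times 2$ blocks $E_k$ whose characteristic polynomial is exactly your quadratic. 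The two derivations therefore meet at the same polynomial and the same modulus $\sqrt{\lambda_k(\tA)}$, and both lean on the balanced condition to get a real, diagonalizable spectrum for $A$ (note the symmetric conjugate is $P^{-1/2}AP^{1/2}$, not $P^{1/2}AP^{-1/2}$ as you wrote — harmless, since similarity to a symmetric matrix is all you use). What the paper's construction buys is precisely the step you flag as your main obstacle: because $\overline{X}$ is assembled as a product of explicitly invertible factors (the $V'$-conjugation, the eigenvector matrix $Y\otimes I_2$ after permutation, and the $2\times 2$ diagonalizers $Z_k$), invertibility of $X$ and the biorthogonality $X^{-1}X=I_{2N}$ come for free, with no case analysis over repeated eigenvalues of $\tA$. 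Your route buys a more transparent picture of where the quadratic comes from and avoids introducing $V'$, but to be complete you would still need to carry out the independence bookkeeping you sketch (injectivity of $x\mapsto(x;cVx)$ on each eigenspace, $c^+\neq c^-$ from the nonzero discriminant, $Vx\neq 0$ for $\beta\neq 1$, and orthogonality of the $L$-rows against $X_R$); none of these steps fails, so the gap is one of execution rather than of ideas.
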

\begin{proof}
	See Appendix \ref{appdx-Lemma-fd}.
\end{proof}
{\noindent{\bf Remark 1. (Other possible decompositions)}
The eigendecomposition \eqref{B-deco-0} for $B$ is not unique because we can always scale $X$ and $X^{-1}$ to achieve different decompositions. In this paper, we will study the following family of decompositions:
\eq{
B = X^\prime D (X^\prime)^{-1},
}
where 
\eq{\label{X and X_inv-prime}
	X^\prime = 
	\ba{ccc}
	R & \vline & \frac{1}{c}X_R
	\ea, \quad
	(X^\prime)^{-1}=
	\ba{c}
	L\\
	\hline
	c X_L
	\ea,
}
and $c$ can be set to any nonzero constant value. We will exploit later the choice of $c$ in identifying the stability range for exact diffusion.\hspace{7.15cm} \qed}

{
For convenience, we introduce the vectors:
\eq{\label{tsdhb}
&r_1=
\ba{c}
\hspace{-1.8mm}\mathds{1}_N \hspace{-1.8mm}\\
\hspace{-1.8mm}0\hspace{-1.8mm}
\ea,
r_2=
\ba{c}
\hspace{-1.8mm}0\hspace{-1.8mm}\\
\hspace{-1.8mm}\mathds{1}_N\hspace{-1.8mm}
\ea,
\ell_1=
\ba{c}
\hspace{-1.8mm}p\hspace{-1.8mm}\\
\hspace{-1.8mm}0\hspace{-1.8mm}
\ea,
\ell_2=
\ba{c}
\hspace{-1.8mm}0\hspace{-1.8mm}\\
\hspace{-1.8mm}\frac{1}{N}\mathds{1}_N\hspace{-1.8mm}
\ea,
}
so that
\eq{\label{R and L-2}
R = [r_1\ r_2], \quad L = \ba{c}\ell_1\tran\\ \ell_2\tran \ea. 
}
Using \eqref{B-deco-0}--\eqref{R and L-2}, we write
\eq{\label{cB-decompoision}
	\cB&=(X^\prime \otimes I_M) (D \otimes I_M) ( (X^\prime)^{-1} \otimes I_M)
	\define \cX' \cD (\cX')^{-1} \nnb
	&= 
	\ba{ccc}
	\hspace{-2mm}\cR_1 & \cR_2 & \frac{1}{c}\cX_{R}\hspace{-2mm}
	\ea
	\ba{ccc}
	I_{M} & 0 & 0\\
	0 & I_M & 0\\
	0 & 0 & \cD_1 
	\ea
	\ba{c}
	\cL_1\tran \\
	\cL_2\tran \\
	c \cX_L
	\ea,
}
where $\cD_1 = D_1\otimes I_M$, 
\eq{\label{R and L-kron}
	&\cR_1=\ba{c}\hspace{-1mm}\cI\hspace{-1mm}\\ \hspace{-1mm}0\hspace{-1mm}\ea \in \RR^{2NM\times M},\;\; \cR_2 = \ba{c}0 \\ \cI\ea\in \RR^{2NM\times M},}
\eq{
\cL_1 = \ba{c}\cp\\0 \ea\in \RR^{2NM\times M},\;\; \cL_2 = \ba{c}\hspace{-1.8mm}0\hspace{-1.8mm}\\\hspace{-1.8mm}\frac{1}{N}\cI \hspace{-1.8mm}\ea \in \RR^{2NM\times M},
}
while $\cX_R=X_R\otimes I_M\in \RR^{2NM\times 2(N-1)M}$ and $\cX_L=x_L\otimes I_M \in \RR^{2(N-1)M\times 2NM}$. Moreover, we are also introducing
\eq{
\cI\hspace{-1mm}=\hspace{-1mm}\mathds{1}_N \otimes I_M \in \RR^{NM \times M},\; \overline{\cP} \hspace{-1mm}=\hspace{-1mm} p\otimes I_M \in \RR^{NM \times M},\label{xcn287}
}
where the variable  $\overline{\cP}$ defined above is different from the earlier variable ${\cP}=P\otimes I_M\in\real^{NM\times NM}$.
Multiplying both sides of \eqref{error-recursion} by ${(\cX')}^{-1}$:
%
\eq{
\hspace{-1mm}(\cX')^{-1}
	\ba{c}
	\hspace{-1mm}\twd_i\hspace{-1mm}\\
	\hspace{-1mm}\tyd_i\hspace{-1mm}
	\ea
	\hspace{-1mm}=\hspace{-1mm}&\; 
	[(\cX')^{-1}(\cB-\cT_{i-1}) \cX'] (\cX')^{-1}
	\ba{c}
	\hspace{-1mm}\twd_{i-1}\hspace{-1mm}\\
	\hspace{-1mm}\tyd_{i-1}\hspace{-1mm}
	\ea
}
leads to
\eq{\label{recursion-transform-2}
	\ba{c}
	\hspace{-1mm}\bar{\sx}_i\hspace{-1mm}\\
	\hspace{-1mm}\widehat{\sx}_i\hspace{-1mm}\\
	\hspace{-1mm}\check{\sx}_i \hspace{-1mm}
	\ea
	\hspace{-1mm}=\hspace{-1mm}
	&\; 
	\left( 
	\ba{ccc}
	I_M & 0 & 0\\ 0 & I_M &0 \\0 & 0 & \cD_1
	\ea
	- \cS_{i-1}
	\right)
	\ba{c}
	\hspace{-1mm}\bar{\sx}_{i-1}\hspace{-1mm}\\
	\hspace{-1mm}\widehat{\sx}_{i-1}\hspace{-1mm}\\
	\hspace{-1mm}\check{\sx}_{i-1}\hspace{-1mm}
	\ea,	
}
where we defined
\eq{\label{x-bar and x-check}
	\ba{c}
	\hspace{-1mm}\bar{\sx}_i\hspace{-1mm}\\
	\hspace{-1mm}\widehat{\sx}_i\hspace{-1mm}\\
	\hspace{-1mm}\check{\sx}_i \hspace{-1mm}
	\ea \define&\; (\cX')^{-1}\ba{c}
	\hspace{-1mm}\twd_i\hspace{-1mm}\\
	\hspace{-1mm}\tyd_i\hspace{-1mm}
	\ea = 
	\ba{c}
	\cL_1\tran \\
	\cL_2\tran \\
	c \cX_L
	\ea
	\ba{c}
	\hspace{-1mm}\twd_i\hspace{-1mm}\\
	\hspace{-1mm}\tyd_i\hspace{-1mm}
	\ea,
}
and
\eq{\label{S}
\cS_{i-1}\define&(\cX')^{-1}\cT_{i-1}\cX'\nnb
\hspace{-2mm}=&\ba{ccc}
	\hspace{-2mm}\cL_1\tran \cT_{i-1}\cR_1 &  \cL_1\tran \cT_{i-1}\cR_2 &  \frac{1}{c}\cL_1\tran\cT_{i-1}\cX_R \hspace{-2mm}\\
	 \hspace{-2mm}\cL_2\tran \cT_{i-1}\cR_1&  \cL_2\tran \cT_{i-1}\cR_2 &  \frac{1}{c}\cL_2\tran\cT_{i-1}\cX_R \hspace{-2mm}\\
	\hspace{-2mm}c\cX_L\cT_{i-1}\cR_1 &  c\cX_L\cT_{i-1}\cR_2 & \cX_L\cT_{i-1}\cX_R \hspace{-2mm}
	\ea.
}
To evaluate the block entries of ${\cS}_{i-1}$, we partition 
\eq{\label{usd9}
	\cX_R = 
	\ba{cc}
	\cX_{R,u} \\
	\cX_{R,d}
	\ea,
}
where $\cX_{R,u}\in \RR^{NM \times 2(N-1)M}$ and $\cX_{R,d}\in \RR^{NM \times 2(N-1)M}$. Then, it can be verified that
\eq{\label{S-1st}
\cL_1\tran \cT_{i-1}\cR_1 &=  \overline{\cP}\tran\cM\cH_{i-1}\cI,\\
\cL_1\tran \cT_{i-1}\cR_2 &=  0,\\
\frac{1}{c}\cL_1\tran \cT_{i-1}\cX_R &= \frac{1}{c} \overline{\cP}\tran\cM\cH_{i\hspace{-0.3mm}-\hspace{-0.3mm}1}\cX_{R,u}.\label{cxbwm8}
}
While
\eq{
\cL_2\tran \cT_{i-1} = \ba{cc}
\hspace{-1.5mm}0 \hspace{-1mm}&\hspace{-1mm} \frac{1}{N}\cI\tran\hspace{-1.5mm}
\ea\hspace{-1.5mm}
\ba{cc}
\hspace{-1.5mm}\tcA\tran\cM\cH_{i-1} &  0\hspace{-1.5mm} \\
\hspace{-1.5mm}\cV\tcA\tran\cM\cH_{i-1} & 0\hspace{-1.5mm}
\ea \overset{\eqref{xcn987-2}}{=} \ba{cc}
\hspace{-1.5mm}0 \hspace{-1mm}&\hspace{-1mm} 0\hspace{-1.5mm}
\ea,
}
Therefore, it follows that
\eq{\label{S-2nd}
\cL_2\tran \cT_{i-1}\cR_1 =0,\;\; \cL_2\tran \cT_{i-1}\cR_2 =0,\;\; \frac{1}{c}\cL_2\tran\cT_{i-1}\cX_R =0.
}
Substituting \eqref{S}, \eqref{S-1st}--\eqref{cxbwm8} and \eqref{S-2nd} into \eqref{recursion-transform-2}, we have
\eq{\label{znhg}\footnotesize 
	\ba{c}
	\hspace{-2mm}\bar{\sx}_i\hspace{-2mm} \\
	\hspace{-2mm}\widehat{\sx}_i\hspace{-2mm} \\
	\hspace{-2mm}\check{\sx}_i\hspace{-2mm}
	\ea
	\hspace{-1.2mm}&=\hspace{-1.2mm}
	\footnotesize 
	\ba{ccc}
	\hspace{-3mm}I_{\hspace{-0.3mm} M} \hspace{-1.2mm}-\hspace{-1.2mm} \overline{\cP}\tran\hspace{-1.2mm}\cM\cH_{i\hspace{-0.3mm}-\hspace{-0.3mm}1}\cI &  \hspace{-0.8mm}0 &\hspace{-1.3mm} -\frac{1}{c}\overline{\cP}\tran\hspace{-1.2mm}\cM\cH_{i\hspace{-0.3mm}-\hspace{-0.3mm}1}\cX_{R,u} \hspace{-2mm}\\
	\hspace{-3mm}0 &  \hspace{-1.3mm}I_{\hspace{-0.3mm} M} &\hspace{-3.3mm} 0 \hspace{-2mm} \\
	\hspace{2mm}-c\cX_L\cT_{i-1}\cR_1 &  \hspace{-1.3mm}-c\cX_L\cT_{i-1}\cR_2 \hspace{-1.3mm} &  \cD_1 \hspace{-1mm}-\hspace{-1mm} \cX_L\cT_{i-1}\cX_R \hspace{-2mm}
	\ea 
	\hspace{-2mm}\ba{c}
	\hspace{-2.5mm}\bar{\sx}_{i\hspace{-0.5mm}-\hspace{-0.5mm}1}\hspace{-2.5mm} \\
	\hspace{-2.5mm}\widehat{\sx}_{i\hspace{-0.5mm}-\hspace{-0.5mm}1}\hspace{-2.5mm} \\
	\hspace{-2.5mm}\check{\sx}_{i\hspace{-0.5mm}-\hspace{-0.5mm}1}\hspace{-2.5mm}
	\ea
}
From the second line of \eqref{znhg}, we get
\eq{\label{28cn00}
	\widehat{\sx}_i = \widehat{\sx}_{i-1}.
}
As a result, $\widehat{\sx}_i$ will stay at $0$ only if the initial value $\widehat{\sx}_{0} = 0$. From the definition of $\cL_2$ in \eqref{tsdhb} and \eqref{x-bar and x-check} we have
\eq{\label{hx_0=0}
	\widehat{\sx}_0 &= \cL_2\tran \ba{c}
	\hspace{-1mm}\twd_0\hspace{-1mm}\\
	\hspace{-1mm}\tyd_0\hspace{-1mm}
	\ea = \frac{1}{N}\cI\tran \tyd_0 \nnb
	&\overset{\eqref{error}}{=}\frac{1}{N}\cI\tran (\sy^\star_o - \sy_0) \overset{\eqref{zn-0}}{=} \frac{1}{N}\cI\tran (\sy^\star_o - \cV \sw_0).
}
Recall from Lemma \ref{sy in V range sapce} that $\sy_o^\star$ lies in the $\mathrm{range}(\cV)$, so that $\sy^\star_o - \cV \sw_0$ also lies in $\mathrm{range}(\cV)$. From Lemma \ref{lm:null-V} we conclude that $\widehat{\sx}_0=0$. Therefore, from \eqref{28cn00} we have 
\eq{\label{xzcn}
	\widehat{\sx}_i=0, \quad \forall i\ge 0
}
With \eqref{xzcn}, recursion \eqref{znhg} is equivalent to
\eq{\label{final-recursion}
	\hspace{-3mm}
	\boxed{
	\ba{c}
	\hspace{-2mm}\bar{\sx}_i\hspace{-2mm}\\
	\hspace{-2mm}\check{\sx}_i\hspace{-2mm}
	\ea \hspace{-1.5mm}=\hspace{-1.5mm}
	\ba{cc}
	\hspace{-2mm}I_M \hspace{-1mm}-\hspace{-1mm}{\overline{\cP}\tran\cM\cH_{i\hspace{-0.4mm}-\hspace{-0.4mm}1}\cI} & -\frac{1}{c}\overline{\cP}\tran\cM\cH_{i\hspace{-0.4mm}-\hspace{-0.4mm}1}\cX_{R,u}\hspace{-2mm}\\
	\hspace{-2mm}- c\cX_L \cT_{i-1} \cR_1 & \cD_1 - \cX_L \cT_{i-1} \cX_R \hspace{-2mm}
	\ea \hspace{-1.5mm}
	\ba{c}
	\hspace{-2mm}\bar{\sx}_{i\hspace{-0.4mm}-\hspace{-0.4mm}1}\hspace{-2mm}\\
	\hspace{-2mm}\check{\sx}_{i\hspace{-0.4mm}-\hspace{-0.4mm}1}\hspace{-2mm}
	\ea}
}
The convergence of the above recursion is stated as follows.
}

\begin{theorem}[\sc Linear Convergence]\label{theom-convergence}
	Suppose each cost function $J_k(w)$ satisfies Assumption \ref{ass-lip}, the left-stochastic matrix $A$ satisfies the local balance condition \eqref{local-balance}, and also condition \eqref{q-A} holds. The exact diffusion recursion \eqref{zn-1} converges exponentially fast to $(\sw^\star, \sy^\star_o)$ for step-sizes satisfying 
	\eq{\label{diffusion-range-step-size}
		\mu_{\max}\le \frac{p_{k_o}\tau_{k_o}\nu (1-\lambda)}{2\sqrt{p_{\max}}\alpha_d \delta^2},
	}
	where $\lambda \hspace{-1mm}=\hspace{-1mm} \sqrt{\lambda_2(\tA)}\hspace{-1mm}<\hspace{-1mm}1$,  $\tau_{k_o}\hspace{-1mm}=\hspace{-1mm}\mu_{k_o}/\mu_{\max}$, $p_{\max}\hspace{-1mm}=\hspace{-1mm}\max_k\{p_k\}$ and
	\eq{\label{T-d}
	\alpha_d \hspace{-0.5mm} \define \hspace{-0.5mm} \|\cX_L\| \|\cT_d\| \|\cX_R\|, \mbox{ where } \cT_d \define 
	\ba{cc}
	\tcA\tran & 0\\
	\cV\tcA\tran & 0
	\ea.
	} 
%
	The convergence rate for the error variables is given by
	\eq{
	\left\|
	\ba{cc}
	\twd_i\\
	\tyd_i
	\ea
	\right\|^2 \le C \rho^i,
	}
	where $C$ is some constant and $\rho=1-O(\mu_{\max})$, namely,
	\eq{
		\hspace{-1mm}\rho =& \max\Big\{ 1-p_{k_o}\tau_{k_o}\nu \mu_{\max} + \frac{2\sqrt{p_{\max}}\alpha_d \delta^2 \mu^2_{\max}}{1-\lambda},\nnb
		&\hspace{1cm} \lambda\hspace{-0.5mm}+\hspace{-0.5mm}\frac{\sqrt{p_{\max}}\alpha_d \delta^2\mu_{\max}}{p_{k_o}\tau_{k_o}\nu} \hspace{-0.5mm}+\hspace{-0.5mm} \frac{2\alpha_d^2 \delta^2\mu_{\max}^2}{1-\lambda} \Big\} < 1.
	}
\end{theorem}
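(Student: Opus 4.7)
My plan is to work directly with the block recursion \eqref{final-recursion}, because the coordinate change has already decoupled the slow (consensus/optimality) mode $\bar{\sx}_i\in\RR^M$ from the fast (disagreement) mode $\check{\sx}_i\in\RR^{2(N-1)M}$, up to off-diagonal coupling that is $O(\mu_{\max})$. The key is to read off a $2\times 2$ scalar recursion for the pair $(\|\bar{\sx}_i\|,\|\check{\sx}_i\|)$, show its transition matrix has spectral radius $\rho<1$, and then use the freedom in the constant $c$ of Remark~1 to minimize $\rho$.

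First, I would estimate each of the four blocks of the iteration matrix using Assumption~\ref{ass-lip}. For the $(1,1)$ block I use $\overline{\cP}\tran\cM\cH_{i-1}\cI=\sum_k p_k\mu_k H_{k,i-1}$, which under \eqref{H-properties} and $\mu_k=\tau_k\mu_{\max}$ satisfies $0\prec p_{k_o}\tau_{k_o}\nu\mu_{\max}\,I_M \preceq \sum_k p_k\mu_k H_{k,i-1} \preceq \mu_{\max}\delta\,I_M$, so for $\mu_{\max}\delta\le 2$,
\[
\|I_M-\overline{\cP}\tran\cM\cH_{i-1}\cI\|\le 1-p_{k_o}\tau_{k_o}\nu\mu_{\max}.
\]
For the three remaining blocks I use the factorization $\cT_{i-1}=\cT_d\,\mathrm{diag}(\cM\cH_{i-1},0)$, so $\|\cT_{i-1}\|\le \|\cT_d\|\delta\mu_{\max}$. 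Combined with $\|\overline{\cP}\|\le\sqrt{p_{\max}}$ and the definition of $\alpha_d$, this yields bounds of the schematic form $\|G_{12}\|\le \sqrt{p_{\max}}\,\alpha_d\delta\mu_{\max}/c$, $\|G_{21}\|\le c\,\sqrt{p_{\max}}\,\alpha_d\delta\mu_{\max}$, and $\|G_{22}\|\le \lambda+\alpha_d\delta\mu_{\max}$, where $\lambda=\|\cD_1\|=\sqrt{\lambda_2(\tA)}$ by Lemma~\ref{lm-B-decomposition}.

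Second, I would square the induced norm inequalities and apply Young's inequality $\|u+v\|^2\le(1+t)\|u\|^2+(1+t^{-1})\|v\|^2$ separately on each row, with two different weights $t_1,t_2>0$ chosen to absorb the diagonal decay into the desired form. Concretely, for the slow row I pick $t_1$ so that $(1+t_1)(1-p_{k_o}\tau_{k_o}\nu\mu_{\max})^2\le 1-p_{k_o}\tau_{k_o}\nu\mu_{\max}$, while for the fast row I pick $t_2$ so that $(1+t_2)\lambda^2\le\lambda$, which is exactly the mechanism that produces the $(1-\lambda)^{-1}$ amplification factor in $\rho$. Plugging in the block-norm bounds gives a $2\times 2$ componentwise inequality
\[
\begin{pmatrix}\|\bar{\sx}_i\|^2\\ \|\check{\sx}_i\|^2\end{pmatrix}
\;\preceq\;
\Gamma(c,\mu_{\max})
\begin{pmatrix}\|\bar{\sx}_{i-1}\|^2\\ \|\check{\sx}_{i-1}\|^2\end{pmatrix},
\]
and hence the maximum row sum of $\Gamma$ bounds the rate.

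Third, I would exploit the free parameter $c$ from Remark~1. Choosing $c$ so as to equalize (or suitably balance) the off-diagonal entries of $\Gamma$ leaves one term of order $\mu_{\max}^2$ in the slow-row off-diagonal (coming from $\|G_{12}\|^2/(a\mu_{\max})$ with $a=p_{k_o}\tau_{k_o}\nu$) and one term of order $\mu_{\max}$ in the fast-row off-diagonal (coming from $\|G_{21}\|^2/(1-\lambda)$). This is precisely the asymmetry visible in the two components of $\rho$: the first component accumulates $\mu_{\max}^2/(1-\lambda)$, the second accumulates $\mu_{\max}/(p_{k_o}\tau_{k_o}\nu)$ together with the geometric-series factor $(1-\lambda)^{-1}$ from $t_2$. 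Finally I would verify that the step-size condition \eqref{diffusion-range-step-size} is exactly what is needed to push both row sums below $1$, and then iterate to conclude $\|\bar{\sx}_i\|^2+\|\check{\sx}_i\|^2\le C\rho^i$; invertibility of the basis change $\cX'$ transfers this back to $\|\twd_i\|^2+\|\tyd_i\|^2\le C'\rho^i$.

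\textbf{Main obstacle.} The routine bounds on the four blocks are straightforward given Lemma~\ref{lm-B-decomposition} and the factorization of $\cT_{i-1}$. The delicate part is the joint choice of the two Young weights $(t_1,t_2)$ and the decomposition parameter $c$. One must pick them so that (i) the slow-mode decay $1-O(\mu_{\max})$ survives after the $(1+t_1)$ inflation, (ii) the fast-mode contraction stays strictly below $1$ after the $(1+t_2)$ inflation, and (iii) the two off-diagonal couplings do not dominate either diagonal — all simultaneously, and in a form that produces exactly the published $\rho$ with $\alpha_d$, $p_{\max}$, and $(1-\lambda)^{-1}$. Getting the bookkeeping to match \eqref{diffusion-range-step-size} as a clean sufficient condition is where most of the work lies.
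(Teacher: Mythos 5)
Your plan follows the paper's own proof essentially step for step: same reduced recursion \eqref{final-recursion}, same two-sided bound $p_{k_o}\tau_{k_o}\nu\mu_{\max} I_M \preceq \overline{\cP}\tran\cM\cH_{i-1}\cI \preceq \delta\mu_{\max} I_M$, same factorization $\cT_{i-1}=\mu_{\max}\cT_d\,\mathrm{diag}\{\Gamma\cH_{i-1},\Gamma\cH_{i-1}\}$, the same Jensen/Young weights (effectively $t_1\sim p_{k_o}\tau_{k_o}\nu\mu_{\max}$ on the slow row and $t_2$ chosen so that the fast diagonal contributes $\lambda$, which is where the $(1-\lambda)^{-1}$ factors come from), and the same optimization over the scaling $c$ of Remark~1, which the paper resolves at $c^2=\sqrt{p_{\max}}\|\cX_R\|/(\|\cX_L\|\|\cT_d\|)$.

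There is, however, one concrete step that fails as written: you propose to certify $\rho(\Gamma)<1$ via the \emph{maximum row sum} of the $2\times 2$ comparison matrix and to "push both row sums below $1$." With the block bounds you (and the paper) derive, the comparison matrix has first row $\bigl(1-\sigma_{11}\mu_{\max},\ \tfrac{\sigma_{12}^2}{\sigma_{11}}\mu_{\max}\bigr)$ with $\sigma_{11}=p_{k_o}\tau_{k_o}\nu$ and $\sigma_{12}^2=p_{\max}\delta^2\|\cX_R\|^2/c^2$; both entries are linear in $\mu_{\max}$, so the first row sum is below $1$ only if $\sigma_{12}^2<\sigma_{11}^2$. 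At the balanced choice of $c$ this reads $\sqrt{p_{\max}}\alpha_d\delta^2<(p_{k_o}\tau_{k_o}\nu)^2$, which is false in general (the paper shows $\alpha_d\ge 1$, while $p_{k_o}\tau_{k_o}\nu\le\nu<\delta$), so no step-size, however small, makes that row sum less than one. The paper instead uses the induced $1$-norm, i.e.\ the maximum \emph{column} sum: this places the problematic $O(\mu_{\max})$ coupling $\tfrac{\sigma_{12}^2}{\sigma_{11}}\mu_{\max}$ in the same column as the fast diagonal $\lambda$, where the slack $1-\lambda$ absorbs it, and leaves only the harmless $O(\mu_{\max}^2)$ term next to the slow diagonal $1-\sigma_{11}\mu_{\max}$. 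This is precisely why the published $\rho$ has the grouping it does. The fix is a one-line change (work with column sums, i.e.\ iterate $\|z_i\|_1\le\|G\|_1\|z_{i-1}\|_1$ for $z_i=\col\{\|\bar{\sx}_i\|^2,\|\check{\sx}_i\|^2\}$), but without it your verification step cannot be completed and the balancing of $c$ would have to be redone. The rest of the bookkeeping — including the comparison of the resulting bound against $1/\delta$ and the transfer back to $(\twd_i,\tyd_i)$ through $\|\cX'\|$ — then goes through exactly as you describe.
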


\begin{proof} 
	See Appendix \ref{app-theom-conv}.
\end{proof} 

{\color{black}
With similar arguments shown above, we can also establish the convergence property of 
the exact diffusion algorithm 1' from Part I \cite{yuan2017exact1}. Compared to the above convergence analysis, the error dynamics for algorithm 1' will now be perturbed by a mismatch term caused by the power iteration. Nevertheless, once the analysis is carried out we arrive at a similar conclusion.  


	\begin{theorem}[\sc Linear convergence of Algorithm $1^\prime$] \label{them-algorithm-prime}Under the conditions of Theorem \ref{theom-convergence}, there exists a positive constant $\bar{\mu} > 0$ such that for step-sizes satisfying $\mu < \bar{\mu}$, the exact diffusion Algorithm 1' will converge exponentially fast to $(\sw^\star, \sy_o^\star)$.
	\end{theorem}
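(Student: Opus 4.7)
The plan is to mimic the analysis of Theorem~\ref{theom-convergence}, but track the additional mismatch term introduced by the power iteration that estimates the Perron vector $p$ in Algorithm~$1'$. Throughout the argument, let $\hat{p}_{k,i}$ denote the power-iteration estimate of $p_k$ at agent $k$ and iteration $i$, and let $\mu_{k,i}=q_k\mu_o/\hat{p}_{k,i}$. Since the combination matrix $A$ is primitive, standard Perron--Frobenius arguments guarantee a geometric bound of the form $|\hat{p}_{k,i}-p_k|\le C_p\,\gamma^{\,i}$ for some $\gamma\in(0,1)$ and constant $C_p>0$, independent of $\mu_o$. Consequently, if we set $\cM_{i-1}=\diag\{\mu_{1,i-1}I_M,\ldots,\mu_{N,i-1}I_M\}$ and $\cM=\diag\{\mu_1 I_M,\ldots,\mu_N I_M\}$ corresponding to the exact Perron weights used in Theorem~\ref{theom-convergence}, then $\Delta\cM_{i-1}\define \cM_{i-1}-\cM$ satisfies $\|\Delta\cM_{i-1}\|\le C_m\mu_o\,\gamma^{\,i}$ for some constant $C_m$.

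Next I would repeat the derivation leading to \eqref{error-recursion}, keeping the time-varying $\cM_{i-1}$ in place. Subtracting the same optimality conditions \eqref{KKT-1-1}--\eqref{KKT-2-2} now leaves behind an extra inhomogeneous term, because at the optimum the identity $\tcA\tran\cM\grad\cJ^o(\sw^\star)+\cP^{-1}\cV\sy_o^\star=0$ uses the nominal $\cM$, not $\cM_{i-1}$. After applying the mean-value theorem as in \eqref{H_k_i-1}--\eqref{xcnh} and rearranging as in \eqref{nxcm987}, the perturbed error recursion takes the form
\eq{
\ba{c}\twd_i\\\tyd_i\ea = (\cB-\cT_{i-1})\ba{c}\twd_{i-1}\\\tyd_{i-1}\ea + \epsilon_{i-1},
}
where the perturbation $\epsilon_{i-1}$ is proportional to $\Delta\cM_{i-1}\grad\cJ^o(\sw^\star)$, and hence obeys $\|\epsilon_{i-1}\|\le C_\epsilon\mu_o\gamma^{\,i}$ for a constant $C_\epsilon$ depending only on problem data.

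I would then apply the same fundamental decomposition of Lemma~\ref{lm-B-decomposition} and the change of basis \eqref{x-bar and x-check}, which produces a recursion of the form \eqref{final-recursion} but with an added driving term $(\cX')^{-1}\epsilon_{i-1}$. The key observation is that all of the contraction estimates developed in the proof of Theorem~\ref{theom-convergence} still apply to the homogeneous part of the dynamics, so that for $\mu_o$ below the same threshold \eqref{diffusion-range-step-size} (recomputed with a slightly smaller margin to absorb the $O(\mu_o)$ corrections from $\Delta\cM_{i-1}$ inside $\cT_{i-1}$), the transition operator is a contraction with rate $\rho<1$. Taking norms and unrolling the recursion gives
\eq{
\Big\|\ba{c}\twd_i\\\tyd_i\ea\Big\|\le \rho^{i}\Big\|\ba{c}\twd_0\\\tyd_0\ea\Big\| + \sum_{j=0}^{i-1}\rho^{i-1-j}\,C_\epsilon\mu_o\gamma^{\,j+1},
}
and since the convolution of two geometric sequences is itself geometric with rate $\max\{\rho,\gamma\}<1$, exponential convergence follows. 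The existence of $\bar\mu>0$ is then ensured by choosing $\mu_o$ small enough to satisfy both the stability bound \eqref{diffusion-range-step-size} and any additional $O(\mu_o)$ slack needed to dominate the time-varying perturbation from $\Delta\cM_{i-1}$ inside $\cT_{i-1}$.

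The main obstacle is the last point: the matrix $\cT_{i-1}$ now depends on $\cM_{i-1}$, so the contraction factor is itself perturbed in a time-varying fashion, not just the inhomogeneous term $\epsilon_{i-1}$. I would handle this by splitting $\cT_{i-1}=\cT_{i-1}^{\mathrm{nom}}+\Delta\cT_{i-1}$ with $\cT_{i-1}^{\mathrm{nom}}$ using the nominal $\cM$, and absorbing $\|\Delta\cT_{i-1}\|=O(\mu_o\gamma^{i})$ into a slightly enlarged contraction factor $\rho+O(\mu_o\gamma^{i})$; after a finite transient this is bounded by any fixed $\rho'\in(\rho,1)$, and the tail argument above then gives the claimed exponential convergence.
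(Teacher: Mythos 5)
Your proposal is correct and follows essentially the same route as the paper's Appendix C: both treat Algorithm $1'$ as the Theorem~\ref{theom-convergence} error dynamics perturbed by a geometrically vanishing mismatch $(\cM_i'-\cM)\grad\cJ^o(\cdot)$, split that mismatch into a part proportional to the error (absorbed into the contraction factor after a finite transient) plus a pure forcing term proportional to $\grad\cJ^o(\sw^\star)$, and conclude via the convolution of two geometric sequences. The only detail you gloss over, which the paper handles by citing Part I, is that bounding $\|\cM_i'-\cM\|$ requires the power-iteration estimates $z_{k,i}(k)$ to stay bounded away from zero, not just to converge to $p_k$.
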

	\begin{proof}
		See Appendix \ref{app-algorithm-prime}.
	\end{proof}
	
}

%

%

\section{Stability Comparison with EXTRA} \label{sec-comparision}
\subsection{Stability Range of EXTRA}
In the case where the combination matrix $A$ is symmetric and {\em doubly-stochastic}, and all agents choose the {\em same} step-size $\mu$, the exact diffusion recursion \eqref{zn-1} reduces to 
\begin{equation}
\left\{
\begin{aligned}
\sw_i &= \tcA \Big(\sw_{i\hspace{-0.3mm}-\hspace{-0.3mm}1}\hspace{-0.8mm}-\hspace{-0.8mm}\mu \grad \cJ^o(\sw_{i\hspace{-0.3mm}-\hspace{-0.3mm}1})\Big)\hspace{-0.8mm}-\hspace{-0.8mm}\cP^{-1}\cV \sy_{i-1}, \label{zn-2}\\
\sy_i &= \sy_{i-1} + \cV \sw_i. 
\end{aligned}
\right.
\end{equation}
where $\cP=I_{MN}/N$.
In comparison, the EXTRA consensus algorithm \cite{shi2015extra} has the following form for the same ${\cP}$ (recall though that exact diffusion \eqref{zn-1} was derived and is applicable to a larger class of balanced left-stochastic matrices and is not limited to symmetric doubly stochastic matrices; it also allows for heterogeneous step-sizes):
	\begin{equation}
	\left\{
	\begin{aligned}
	\sw_i^e &= \tcA \sw_{i\hspace{-0.3mm}-\hspace{-0.3mm}1}^e\hspace{-0.8mm}-\hspace{-0.8mm}\mu \grad \cJ^o(\sw_{i\hspace{-0.3mm}-\hspace{-0.3mm}1}^e)\hspace{-0.8mm}-\hspace{-0.8mm}\cP^{-1}\cV \sy^e_{i-1}, \label{extra-pd-2}\\
	\sy^e_i &= \sy^e_{i-1} + \cV \sw^e_i, 
	\end{aligned}
	\right.
	\end{equation}
where we are using the notation $\sw_i^e$ and $\sy_i^e$ to refer to the primal and dual iterates in the EXTRA implementation. Similar to \eqref{zn-0}, the initial condition for \eqref{extra-pd-2} is 
\begin{equation}
\left\{
\begin{aligned}
\sw^e_0 &= \tcA \sw^e_{\hspace{-0.3mm}-\hspace{-0.3mm}1}\hspace{-0.8mm}-\hspace{-0.8mm}\mu \grad \cJ^o(\sw^e_{\hspace{-0.3mm}-\hspace{-0.3mm}1}), \label{zn-0-extra}\\
\sy^e_0 &= \cV \sw^e_0. 
\end{aligned}
\right.
\end{equation}
Comparing \eqref{zn-2} and \eqref{extra-pd-2} we observe one key difference; the diffusion update in \eqref{zn-2} involves a traditional gradient descent step in the form of ${\sw}_{i-1}-\mu\nabla {\cal J}^{o} ({\sw}_{i-1})$. This step starts from ${\sw}_{i-1}$ and evaluates the graduate vector at the same location. The result is then multiplied by the combination policy $\widetilde{\cal A}$. The same is {\em not} true for exact consensus in \eqref{extra-pd-2}; we observe an asymmetry in its update: the gradient vector is evaluated at ${\sw}_{i-1}^{e}$ while the starting point is at a different location given by $\widetilde{\cal A}{\sw}_{i-1}^e$. This type of asymmetry was shown in \cite{sayed2014adaptive,sayed2014adaptation} to result in  instabilities for the traditional consensus implementation in comparison to the traditional diffusion implementation. It turns out that a similar  problem continues to exist for the EXTRA consensus solution \eqref{extra-pd-2}. In particular, we will show that its stability range is smaller than exact diffusion (i.e., the latter is stable for a larger range of step-sizes, which in turn helps attain faster convergence rates). We will illustrate this behavior in the simulations in some detail. Here, though, we establish these observations analytically. The arguments used to examine the stability range of EXTRA consensus are similar to what we did in Section  \ref{sec-convergence} for exact diffusion; we shall therefore be brief and highlight only the differences.  

As already noted in \cite{shi2015extra}, the optimality conditions for the EXTRA consensus algorithm require the existence of block vectors $({\sw}^{\star},{\sy}^{\star})$ such that 
\eq{
	\mu \grad \cJ^o(\sw^\star) + \cP^{-1}\cV \sy^\star & = 0, \label{extra-KKT-1-1} \\
	\cV \sw^\star &= 0. \label{extra-KKT-2-2}
}
Moreover, as argued in Lemma \ref{sy in V range sapce}, there also exists a unique pair of variables $(\sw^\star, \sy^\star_o)$, in which $\sy^\star_o$ lies in the range space of $\cV$, that satisfies \eqref{extra-KKT-1-1}--\eqref{extra-KKT-2-2}. Now we introduce the block error vectors:
\eq{\label{extra-error}
	\widetilde{\sw}^e_i={\sw}^{\star}-{\sw}_i^e,\;\;\;\;
	\widetilde{\sy}^e_i=\sy_o^{\star}-{\sy}_i^e,
}
and examine the evolution of these error quantities. Using similar arguments in Section \ref{subsec-error-recursion}, and recalling the facts that $\tcA$ is symmetric doubly-stochastic, and $\cM=\mu I_{MN}$, we arrive at the error recursion for EXTRA consensus (see Appendix \ref{app-sta-EXTRA-error-recursion}): 
\eq{
	\ba{c}
	\hspace{-1mm}\twd_i^e\hspace{-1mm}\\
	\hspace{-1mm}\tyd_i^e\hspace{-1mm}
	\ea
	&\hspace{-1mm}=\hspace{-1mm}
	\ba{cc}
	\hspace{-2mm}\tcA - \mu \cH_{i-1} &  -\cP^{-1}\cV \hspace{-2mm}\\
	\hspace{-2mm}\cV (\tcA -\mu \cH_{i-1}) & I_{MN}-\cV \cP^{-1} \cV\hspace{-2mm}
	\ea
	\ba{c}
	\hspace{-1mm}\twd^e_{i-1}\hspace{-1mm}\\
	\hspace{-1mm}\tyd^e_{i-1}\hspace{-1mm}
	\ea \nnb
	&\hspace{-2mm}\define (\cB^e - \cT^e_{i-1})\ba{c}
	\hspace{-1mm}\twd^e_{i-1}\hspace{-1mm}\\
	\hspace{-1mm}\tyd^e_{i-1}\hspace{-1mm}
	\ea, \label{extra-error-recursion-2}
}
where
\eq{
	\cB^e\define 
	\ba{cc}
	\hspace{-1.5mm}\tcA &  -\cP^{-1}\cV \hspace{-1.5mm}\\
	\hspace{-1.5mm}\cV\tcA & I_{MN}\hspace{-1mm}-\hspace{-1mm}\cV \cP^{-1} \cV\hspace{-1.5mm}
	\ea, \cT^e_{i}\define 
	\ba{cc}
	\hspace{-1.5mm}\mu\cH_{i} &  0 \hspace{-1.5mm}\\
	\hspace{-1.5mm}\mu\cV\cH_{i} & 0 \hspace{-1.5mm}
	\ea.\label{extra-T-defi}
}
It is instructive to compare \eqref{extra-error-recursion-2}--\eqref{extra-T-defi} with \eqref{error-recursion}--\eqref{T-defi}. These recursions capture the error dynamics for the exact consensus and diffusion strategies. Observe that $\cB^e = \cB$ when $\tcA$ is symmetric and $\cM=\mu I_{MN}$. Therefore, $\cB^e$ has the same eigenvalue decomposition as in \eqref{cB-decompoision}--\eqref{xcn287}. 
With similar arguments to \eqref{B-deco-0}--\eqref{final-recursion}, we conclude that the reduced error recur-sion for EXTRA consensus takes the form (see Appendix \ref{app-sta-EXTRA-reduced}):
\eq{\label{final-recursion-extra}
	\ba{c}
	\hspace{-2mm}\bar{\sx}^e_i\hspace{-2mm}\\
	\hspace{-2mm}\check{\sx}^e_i\hspace{-2mm}
	\ea \hspace{-1.5mm}=\hspace{-1.5mm}
	\ba{cc}
	\hspace{-2mm}I_M \hspace{-1mm}-\hspace{-1mm}{\mu \overline{\cP}\tran\cH_{i\hspace{-0.4mm}-\hspace{-0.4mm}1}\cI} \hspace{-1mm}&\hspace{-1mm} -\frac{\mu}{c}\overline{\cP}\tran\cH_{i\hspace{-0.4mm}-\hspace{-0.4mm}1}\cX_{R,u}\hspace{-2mm}\\
	\hspace{-2mm}- c\cX_L \cT^e_{i-1} \cR_1 \hspace{-1mm}&\hspace{-1mm} \cD_1 - \cX_L \cT^e_{i-1} \cX_R \hspace{-2mm}
	\ea \hspace{-1.5mm}
	\ba{c}
	\hspace{-2mm}\bar{\sx}^e_{i\hspace{-0.4mm}-\hspace{-0.4mm}1}\hspace{-2mm}\\
	\hspace{-2mm}\check{\sx}^e_{i\hspace{-0.4mm}-\hspace{-0.4mm}1}\hspace{-2mm}
	\ea.
}

\noindent Following the same proof technique as for Theorem \ref{theom-convergence}, we can now establish the following result concerning stability conditions and convergence rate for EXTRA consensus. 
\begin{theorem}[\sc Linear Convergence of EXTRA]\label{lm-convergence-extra}
	Suppose each cost function $J_k(w)$ satisfies Assumption \ref{ass-lip}, and the combination matrix $A$ is primitive, symmetric and doubly-stochastic. The EXTRA recursion \eqref{extra-error-recursion-2} converges exponentially fast to $(\sw^\star, \sy^\star_o)$ for step-sizes $\mu$ satisfying 
	\eq{\label{extra-range-step-size}
		\mu\le \frac{\nu (1-\lambda)}{2\sqrt{N}\alpha_e\delta^2},
	}
	where $\lambda = \sqrt{\lambda_2(\tA)}<1$ and
	\eq{\label{T-e}
	\alpha_e = \|\cX_L\| \|\cT_e\| \|\cX_R\|,\mbox{ where }
	\cT_e = 
	\ba{cc}
	I_{MN} & 0\\
	\cV & 0
	\ea.
	}
	The convergence rate for the error variables is given by
	\eq{
		\left\|
		\ba{cc}
		\twd^e_i\\
		\tyd^e_i
		\ea
		\right\|^2 \le C \rho^i,
	}
	where $C$ is some constant and $\rho=1-O(\mu_{\max})$, namely,
	\eq{\label{rho-e}
		\rho_e =& \max\Big\{ 1-\frac{\nu}{N} \mu_{\max} + \frac{2\alpha_e \delta^2 \mu^2_{\max}}{\sqrt{N}(1-\lambda)},\nnb
		&\hspace{1cm} \lambda+\frac{\sqrt{N}\alpha_e \delta^2\mu_{\max}}{\nu} + \frac{2\alpha_e^2 \delta^2\mu_{\max}^2}{1-\lambda} \Big\} < 1.
	}
\end{theorem}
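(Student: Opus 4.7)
My plan is to mirror the proof of Theorem \ref{theom-convergence} step by step, exploiting the fact that the reduced error recursion \eqref{final-recursion-extra} has exactly the same block structure as the diffusion recursion \eqref{final-recursion}. The simplifications that specialize the diffusion analysis to EXTRA are: (i) the combination matrix is doubly stochastic, so $p_k=1/N$ and hence $\overline{\cP}=(1/N)\cI$; (ii) the step-size is uniform, so $\cM=\mu I_{MN}$; and (iii) the factor $\tcA\tran$ appearing inside $\cT_{i-1}$ in the diffusion case is absent from $\cT^e_{i-1}$ in \eqref{extra-T-defi}, which is precisely why $\cT_e$ in \eqref{T-e} is simpler than $\cT_d$ in \eqref{T-d}. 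Since $\cB^e=\cB$ under these substitutions, Lemma \ref{lm-B-decomposition} and Lemma \ref{sy in V range sapce} apply verbatim, so the change of basis used for exact diffusion already justifies \eqref{final-recursion-extra} together with the initialization identity $\widehat{\sx}^e_0=0$.

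The first concrete step is to bound the two diagonal blocks of the coefficient matrix in \eqref{final-recursion-extra}. For the upper-left block I use $\overline{\cP}\tran\cH_{i-1}\cI=\tfrac{1}{N}\sum_{k=1}^{N}H_{k,i-1}$; together with \eqref{H-properties}, this shows the symmetric matrix lies in $[(\nu/N)I_M,\delta I_M]$, so $\|I_M-\mu\,\overline{\cP}\tran\cH_{i-1}\cI\|\le 1-\mu\nu/N$ whenever $\mu$ is of order $\mathcal{O}(1/\delta)$, a condition that is automatically implied by \eqref{extra-range-step-size}. For the lower-right block I combine $\|\cD_1\|=\lambda<1$ from \eqref{uwehn} with $\|\cX_L\cT^e_{i-1}\cX_R\|\le \mu\delta\,\alpha_e$, the factor $\mu\delta$ being pulled out of $\cT^e_{i-1}$ via the definition of $\cT_e$.

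Next I would bound the off-diagonal blocks. Using $\|\cR_1\|=\sqrt{N}$, $\|\cX_{R,u}\|\le\|\cX_R\|$, and $\|\cH_{i-1}\|\le\delta$, these norms are at most $(\mu\delta/(c\sqrt{N}))\,\|\cX_R\|$ and $c\mu\delta\sqrt{N}\,\|\cX_L\|\,\|\cT_e\|$ respectively. Taking norms in \eqref{final-recursion-extra} and applying the triangle inequality then produces a two-dimensional scalar comparison
\begin{equation*}
\begin{pmatrix}\|\bar{\sx}^e_i\|\\ \|\check{\sx}^e_i\|\end{pmatrix}
\;\le\;
G(c,\mu)\,
\begin{pmatrix}\|\bar{\sx}^e_{i-1}\|\\ \|\check{\sx}^e_{i-1}\|\end{pmatrix},
\end{equation*}
in which $G(c,\mu)$ is an entry-wise nonnegative $2\times 2$ matrix with diagonal $(1-\mu\nu/N,\;\lambda+\mu\delta\alpha_e)$ and off-diagonal entries scaling as $\mu/c$ and $c\mu$ respectively. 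Linear convergence reduces to $\rho(G(c,\mu))<1$, which I would establish by a standard row-sum/Gershgorin argument after tuning the free scaling parameter $c$ introduced in Remark 1.

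The final step is the optimization over $c$. Choosing $c$ so that the two off-diagonal entries are comparable balances the two row sums; requiring both row sums to be strictly less than one yields the step-size bound \eqref{extra-range-step-size} and, after collecting the leading-order terms in $\mu_{\max}$, the convergence rate \eqref{rho-e}. The main obstacle is purely bookkeeping: one must verify that the $\sqrt{N}$ and $\alpha_e$ factors in the two off-diagonal contributions combine cleanly so that \emph{both} rows are simultaneously controlled under the single condition \eqref{extra-range-step-size}. Since the algebra duplicates the diffusion proof under the substitutions $p_{k_o}\tau_{k_o}\mapsto 1/N$, $\sqrt{p_{\max}}\mapsto 1/\sqrt{N}$, and $\alpha_d\mapsto\alpha_e$, the stated bounds drop out essentially by inspection once these substitutions are performed.
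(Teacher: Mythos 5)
Your overall strategy is exactly the paper's: the paper's Appendix for this theorem simply re-runs the proof of Theorem \ref{theom-convergence} on the reduced recursion \eqref{final-recursion-extra}, with the substitutions you identify ($p_k=1/N$, $\cM=\mu I_{MN}$, $\cT_e$ in place of $\cT_d$), obtains the constants $\sigma^e_{11}=\nu/N$, $\sigma^e_{12}=\delta\|\cX_R\|/(c\sqrt{N})$, $\sigma^e_{21}=c\|\cX_L\|\|\cT_e\|\delta$, $\sigma^e_{22}=\alpha_e\delta$, and then says ``from this point onwards, follow exactly the same argument.'' Your identification of $\cB^e=\cB$, the reuse of Lemmas \ref{lm-B-decomposition} and \ref{sy in V range sapce}, the role of $\widehat{\sx}^e_0=0$, and the final substitutions $p_{k_o}\tau_{k_o}\mapsto 1/N$, $\sqrt{p_{\max}}\mapsto 1/\sqrt{N}$, $\alpha_d\mapsto\alpha_e$ are all correct.

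The one substantive divergence is in how you quantify the contraction. You propose taking \emph{plain} norms and the triangle inequality, yielding a $2\times 2$ comparison matrix acting on $(\|\bar{\sx}^e_i\|,\|\check{\sx}^e_i\|)$ whose off-diagonal entries are \emph{both} $O(\mu)$. The paper instead squares both lines and applies Jensen's inequality with the asymmetric weights $t=\sigma^e_{11}\mu$ (first row) and $t=\lambda$ (second row), acting on $(\|\bar{\sx}^e_i\|^2,\|\check{\sx}^e_i\|^2)$; this makes the $(1,2)$ entry $\frac{(\sigma^e_{12})^2}{\sigma^e_{11}}\mu$ but the $(2,1)$ entry $O(\mu^2)$, and it is precisely this asymmetry that makes the first column sum of $G_e$ equal to $1-\sigma^e_{11}\mu+O(\mu^2)$ and produces the literal expressions \eqref{extra-range-step-size} and \eqref{rho-e}. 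Your route does prove linear convergence over a step-size range of the same order (the spectral-radius condition $\rho(G(c,\mu))<1$ is even independent of $c$), but if executed as described it lands on different constants and a different-looking rate, so ``the stated bounds drop out by inspection'' is too quick: to reproduce \eqref{rho-e} you should carry over the squared-norm/Jensen machinery verbatim, including the two specific choices of $t$ and the $1$-norm (column-sum) bound on $G_e$. A minor bookkeeping point: you take $\|\cR_1\|=\sqrt{N}$ (which is what $\|\mathds{1}_N\otimes I_M\|$ actually equals), whereas the paper's computation of $\sigma_{21}$ uses $\|\cR_1\|=1$; be aware of this discrepancy when trying to match the stated constants exactly.
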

\begin{proof}
	See Appendix \ref{app-conv-extra}.
\end{proof}

\subsection{Comparison of Stability Ranges }
When $\tcA$ is symmetric and $\cM=\mu I_{MN}$, from Theorem \ref{theom-convergence} we get the stability range of exact diffusion:
\eq{\label{diffusion-range-step-size-compare}
	\mu\le \frac{\nu (1-\lambda)}{2\sqrt{N}\|\cX_L\| \|\cT_d\| \|\cX_R\|\delta^2},
}
where
\eq{\label{T-d-compare}
	\cT_d = 
	\ba{cc}
	\tcA & 0\\
	\cV\tcA & 0
	\ea.
}
Comparing \eqref{diffusion-range-step-size-compare} with \eqref{extra-range-step-size}, we observe that the expressions differ by the terms $\|\cT_e\|$ and $\|\cT_d\|$. We therefore need to compare these two norms. 

Notice that
\eq{
	\|\cT_e\|^2 &= \lambda_{\max}(\cT_e\tran \cT_e)=\lambda_{\max}(I_{MN} + \cV^2),\\
	\|\cT_d\|^2 &= \lambda_{\max}(\cT_d\tran \cT_d)= \lambda_{\max}\big(\tcA(I_{MN} + \cV^2)\tcA \big). 
}
It is easy to recognize that $\lambda_{\max}(I_{MN} + \cV^2) = \lambda_{\max}(I_{N} + V^2)$. Now, since $A$ is assumed symmetric doubly-stochastic and $P={1\over N} I_N$,  we have
\eq{
	\hspace{-3mm}	I_{N}+V^2 &= I_{N}+ \frac{P - P A}{2} \nnb
	&= I_{N}+\frac{I_{N} - A}{2N} = \frac{(2N+1)I_{N} - A}{2N}, \label{xcngyhu}
}
Moreover, since $A$ is primitive, symmetric and doubly stochastic, we can decompose it as 
\eq{\label{A-deco}
	A = U \Lambda\, U\tran,
} 
where $U$ is orthogonal,  $\Lambda=\diag\{\lambda_1\hspace{-0.5mm}(\hspace{-0.5mm}A\hspace{-0.3mm}),\cdots\hspace{-0.5mm}, \lambda_N(A)\}$ and 
\eq{\label{23n8}
	1 = \lambda_1(A) > \lambda_2(A) \ge \cdots \ge \lambda_N(A) >-1.
}
With this decomposition, expression \eqref{xcngyhu} can be rewritten as
\eq{
	I_{N}+V^2 = U \frac{(2N+1)I_{N} - \Lambda}{2N} U\tran. \label{xcnwedhg8}
} 
from which we conclude that
\eq{\label{zxc6hs90}
	\boxed{
	\lambda_{\max}(I_{N} + V^2) = \frac{(2N+1) - \lambda_N(A)}{2N}
}
}

Similarly, $\lambda_{\max}(\tcA (I_{MN}+\cV^2) \tcA)=\lambda_{\max}(\tA (I_{N} + V^2) \tA)$. Using $\tA = \frac{I_N + A}{2}$, and equations \eqref{A-deco} and \eqref{xcnwedhg8}, we have
\eq{
	&\hspace{-5mm} \tA (I_{N}+ V^2) \tA \nnb
	=&\ \left(\frac{I_{N} + A}{2}\right) \left( \frac{(2N+1)I_{N} - A}{2N} \right)\left(\frac{I_{N} + A}{2}\right) }
\eq{= U \left(\hspace{-1mm}\frac{I_{N} + \Lambda}{2}\hspace{-1mm}\right) \left(\hspace{-1mm}\frac{(2N+1)I_{N} - \Lambda}{2N} \hspace{-1mm}\right) \left(\frac{I_{N} + \Lambda}{2}\right) U\tran.
}
Therefore, we have
\eq{
	&\ \lambda_{\max}\left(\tA (I_{N}+ V^2) \tA\right) \nnb
	=&\ \max_k\left\{\left(\frac{\lambda_k(A) + 1}{2}\right)^2 \left(\frac{2N+1 - \lambda_k(A)}{2N} \right) \right\} \label{xn3wh8-0} \nnb
	\overset{(a)}{\le}&\ \max_k\left\{\left(\frac{\lambda_k(A) + 1}{2}\right)^2\right\} \max_k\left\{ \frac{2N+1 - \lambda_k(A)}{2N} \right\}  \nnb
	\overset{\eqref{23n8}}{=}&\ \frac{2N+1 - \lambda_N(A)}{2N}. 
}
It is worth noting that the ``$=$" sign cannot hold in (a) because 
\eq{
\argmax_k\left\{\left(\frac{\lambda_k(A) + 1}{2}\right)^2\right\} &= 1,\\
\argmax_k\left\{ \frac{2N+1 - \lambda_k(A)}{2N} \right\} &= N.
}
In other words, $\left(\frac{\lambda_k(A) + 1}{2}\right)^2$ and $\frac{2N+1 - \lambda_k(A)}{2N}$ cannot reach their maximum values at the same $k$. As a result,
\eq{\label{compare-Td-Te}
	\|\cT_d\|^2 < \|\cT_e\|^2 \Longrightarrow \alpha_d < \alpha_e.
}
This means that the upper bound on $\mu$ in \eqref{extra-range-step-size} is smaller than the upper bound on $\mu$ in \eqref{diffusion-range-step-size-compare}.

We can also compare the convergence rates of EXTRA consensus and exact diffusion when both algorithms converge. When $\tcA$ is symmetric and $\cM=\mu I_{MN}$, from Theorem \ref{theom-convergence} we get the convergence rate of exact diffusion:
\eq{\label{rho-d}
\rho_d =& \max\Big\{ 1-\frac{\nu}{N} \mu_{\max} + \frac{2\alpha_d \delta^2 \mu^2_{\max}}{\sqrt{N}(1-\lambda)},\nnb
&\hspace{1cm} \lambda+\frac{\sqrt{N}\alpha_d \delta^2\mu_{\max}}{\nu} + \frac{2\alpha_d^2 \delta^2\mu_{\max}^2}{1-\lambda} \Big\}.
}
It is clear from \eqref{rho-d} and \eqref{rho-e} that EXTRA consensus and exact diffusion have the same convergence
rate to first-order in $\mu_{\max}$, namely,
\eq{
\widehat{\rho}_d = 1 - \frac{\nu}{N}\mu_{\max} = \widehat{\rho}_e
} 
More generally, when higher-order terms in $\mu_{\max}$ cannot be ignored, it holds that $\rho_d<\rho_e$ because $\alpha_d<\alpha_e$ (see  \eqref{compare-Td-Te}). In this situation, exact diffusion converges faster than EXTRA.


{\vspace{-1.8mm}
\subsection{An Analytical Example}
In this subsection we illustrate the stability of exact diffusion by considering the example of mean-square-error (MSE) networks \cite{sayed2014adaptation}. Suppose $N$ agents are observing streaming data $\{\d_k(i), \u_{k,i}\}$ that satisfy the regression model
\eq{\label{regresson-model}
\d_k(i)=\u_{k,i}\tran w^o +\v_k(i),
}
where $w^o$ is unknown and $\v_k(i)$ is the noise process that is independent of the regression data $\u_{k,j}$ for any $k,j$. Furthermore, we assume $\u_{k,i}$ is zero-mean with covariance matrix $R_{u,k}=\bE\u_{k,i}\u_{k,i}\tran > 0$, and $\v_k(i)$ is also zero-mean with power $\sigma_{v,k}^2=\bE \v_k^2(i)$. We denote the cross covariance vector between $\d_k(i)$
and $\u_{k,i}$ by $r_{du,k} = \bE\d_k(i)\u_{k,i}$.
To discover the unknown $w^o$, the agents cooperate to solve the following mean-square-error problem:
\eq{\label{MSE-network}
\min_{w\in \RR^M}\ \textstyle{\frac{1}{2}\sum_{k=1}^{N}} \bE \big(\d_k(i)-\u_{k,i}\tran w\big)^2.
}
It was shown in Example 6.1 of \cite{sayed2014adaptation} that the global minimizer of problem \eqref{MSE-network} coincides with the unknown $w^o$ in \eqref{regresson-model}.

When $R_{u,k}$ and $r_{du,k}$ are unknown and only realizations of $\u_{k,i}$ and $\d_k(i)$ are observed by agent $k$, one can employ the diffusion algorithm with stochastic gradient descent to solve \eqref{MSE-network}. However, when $R_{u,k}$ and $r_{du,k}$ are known in advance, problem \eqref{MSE-network} reduces to deterministic optimization problem:
\eq{\label{MSE-network-determ}
	\min_{w\in \RR^M}\ \frac{1}{2}\sum_{k=1}^{N} \big( w\tran R_{u,k}\hspace{0.3mm} w - 2 r_{du,k}\tran w \big).
}
We can then employ the exact diffusion or the EXTRA consensus algorithm to solve \eqref{MSE-network-determ}.

To illustrate the stability issue, it is sufficient to consider a network with $2$ agents (see Fig. \ref{fig:2-agent-MSE}) and with diagonal Hessian matrices, i.e., 
%
\eq{\label{xcn99}
R_{u,1}=R_{u,2}=\sigma^2 I_M.
}
We assume the agents use the combination weights {$\{a, 1-a\}$} with $a\in (0,1)$, so that
\eq{\label{2-MSE-A}
{A=
\ba{cc}
a & 1-a\\
1-a & a
\ea \in \RR^{2\times 2},
}}
which is symmetric and doubly stochastic. The two agents employ the same step-size $\mu$ (or $\mu^e$ in the EXTRA recursion). It is worth noting that the following analysis can be extended to $N$ agents with some more algebra.

Under \eqref{xcn99}, we have $H_1=H_2=\sigma^2 I_M$ and $\cH=\diag\{H_1,H_2\}=\sigma^2 I_{2M}$. For the matrix $A$ in \eqref{2-MSE-A}, we have
\eq{\label{zn288}
\lambda_1(A)=1,\quad \lambda_2(A)=2a-1\in (-1,1),
}
and $p=[0.5;0.5]$, $P=0.5 I_2$. 

\begin{figure}
	\centering
	\includegraphics[scale=0.35]{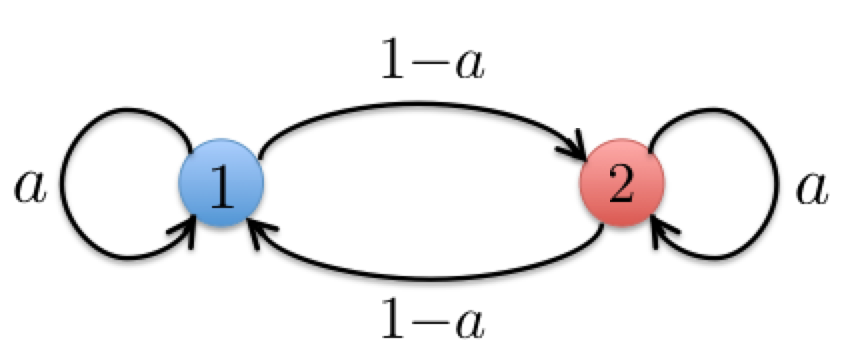}
	\caption{A two-agent network using combination weights $\{a, 1-a\}$}
	\label{fig:2-agent-MSE}
	\vspace{-5mm}
\end{figure} 

Let $\tzd_i = [\twd_i;\tyd_i]\in \RR^{2M}$, and $\tzd_i^e = [\twd_i^e;\tyd_i^e]\in \RR^{2M}$. The exact diffusion error recursion \eqref{error-recursion} and the EXTRA error recursion \eqref{extra-error-recursion-2} reduce to 
\eq{
\tzd_i = \cQ_d \tzd_{i-1},\label{special-ed}
}
\eq{
\tzd_i^e = \cQ_e \tzd_{i-1}^e,\label{special-extra}
}
where
\eq{
\cQ_d \hspace{-1mm}&=\hspace{-1mm}
\underbrace{\ba{cc}
	\hspace{-2mm}(1-\mu \sigma^2 )\tA &  -2 V \hspace{-2mm}\\
	\hspace{-2mm} (1-\mu \sigma^2 )V \tA & \tA \hspace{-2mm}
	\ea}_{Q_d} \otimes I_M, \\
\cQ_e \hspace{-1mm}&=\hspace{-1mm}
\underbrace{\ba{cc}
	\hspace{-2mm}\tA-\mu^e \sigma^2 I_{2} &  -2 V \hspace{-2mm}\\
	\hspace{-2mm} V(\tA-\mu^e \sigma^2 I_{2}) & \tA \hspace{-2mm}
	\ea}_{Q_e} \otimes I_M.
}
To guarantee the convergence of $\tzd_i$ and $\tzd_i^e$,  we need to examine the eigenstructure of the $4\times 4$ matrices $Q_d$ and $Q_e$. {The proof of the next lemma is quite similar to Lemma \ref{lm-B-decomposition}; if desired, see Appendix F of the arXiv version\cite{yuan2017exact2}}.

\begin{lemma}[\sc Eigenstructure of $Q_d$]\label{lm-Q_d-decom} The matrix $Q_d$ admits the following eigendecomposition
	\eq{\label{xn88}
		Q_d = X \overline{Q}_d X^{-1},
	}
	where 
	\eq{
	\overline{Q}_d = 
	\ba{cc}
	1 & 0 \\
	0 & E_d
	\ea
	}
	and
	\eq{\label{cn9999}
		E_d = 
		\ba{ccc}
		\hspace{-2mm}1-\mu\sigma^2 & 0 & 0\hspace{-2mm}\\
		\hspace{-2mm}0 & (1-\mu\sigma^2)a & -\sqrt{2-2a}\hspace{-2mm}\\
		\hspace{-2mm}0 & (1-\mu\sigma^2)a\sqrt{\frac{1-a}{2}} & a\hspace{-2mm}
		\ea.
	}
	Moreover, the matrices $X$ and $X^{-1}$ are given by
	\eq{\label{m87dh}
	X=\ba{cc}
	r& X_R
	\ea,\quad
	X^{-1}=
	\ba{c}
	\ell\tran\\
	X_L
	\ea,
	}
	where $X_R\in \RR^{4\times 3}$, $X_L\in \RR^{3\times 4}$, and 
	\eq{\label{xcn3}
	r=\frac{1}{2}
	\ba{c}
	0\\
	\mathds{1}_2
	\ea\in \RR^{4}, \quad
	\ell=
	\ba{cc}
	0\\
	\mathds{1}_2
	\ea\in \RR^{4}.
	}
	\rightline \qed
\end{lemma}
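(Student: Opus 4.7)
The plan is to exhibit a simultaneous eigenbasis for $\tA$ and $V$, then lift it to a block-decomposition of $Q_d$ that isolates the unit eigenvalue and reveals the $3\times 3$ block $E_d$. First I would check the unit eigenvalue directly: with $r=\tfrac{1}{2}[0;\mathds{1}_2]$, $\ell=[0;\mathds{1}_2]$, the identity $V\mathds{1}_2=0$ from Lemma~\ref{lm:null-V} and the fact that $\tA\mathds{1}_2=\mathds{1}_2$ yield $Q_d r = r$ and $\ell\tran Q_d = \ell\tran$, together with the normalization $\ell\tran r = 1$. This secures the first column of $X$ and the first row of $X^{-1}$ as stated in \eqref{m87dh}--\eqref{xcn3}, and accounts for the isolated $1$ on the diagonal of $\overline{Q}_d$.

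Next I would exploit the fact that $A$ is symmetric with eigenvalues $1$ and $2a-1$ (so $\tA$ has eigenvalues $1$ and $a$), with orthonormal eigenvectors $u_1 = \mathds{1}_2/\sqrt{2}$ and $u_2=[1;-1]/\sqrt{2}$. Since $V^2=(P-AP)/2=(I_2-A)/4$ is a polynomial in $A$, the same $u_1,u_2$ diagonalize $V$, with $Vu_1=0$ and $Vu_2=\sqrt{(1-a)/2}\,u_2$. Consequently the two subspaces $\mathrm{span}\{[u_j;0],[0;u_j]\}$ for $j=1,2$ are $Q_d$-invariant. On the $u_1$-subspace the restriction reads $\bigl[\,\!\begin{smallmatrix}1-\mu\sigma^2 & 0\\ 0 & 1\end{smallmatrix}\!\,\bigr]$, whose eigenvalues are $1$ (attained along $r$) and $1-\mu\sigma^2$; on the $u_2$-subspace the restriction reads $\bigl[\,\!\begin{smallmatrix}(1-\mu\sigma^2)a & -2\sqrt{(1-a)/2}\\ (1-\mu\sigma^2)a\sqrt{(1-a)/2} & a\end{smallmatrix}\!\,\bigr]$, which is exactly the bottom-right $2\times 2$ block of $E_d$ after writing $2\sqrt{(1-a)/2}=\sqrt{2-2a}$.

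Finally I would assemble the pieces: putting the $1-\mu\sigma^2$ entry coming from the $u_1$-subspace next to the $2\times 2$ block from the $u_2$-subspace produces the block-triangular matrix $E_d$ displayed in \eqref{cn9999}, and writing $X=[\,r\mid X_R\,]$, $X^{-1}=[\,\ell\tran;\,X_L\,]$ with $X_R$ collecting the remaining (right) invariant vectors and $X_L$ the corresponding left ones gives \eqref{xn88}. The main obstacle is bookkeeping: one must verify that the correct ordering of the three surviving modes inside $E_d$ matches the claimed form, and that the left/right bases from the two $2\times 2$ restrictions are rescaled so that $X^{-1}X=I_4$. Apart from that normalization, everything reduces to the two-dimensional eigencomputations above, which follow immediately from the simultaneous diagonalization of $\tA$ and $V$.
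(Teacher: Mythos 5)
Your argument is correct, and it lands exactly on the claimed block structure: an isolated unit eigenvalue with right/left eigenvectors $r$ and $\ell$ (correctly normalized so that $\ell\tran r=1$), a second eigenvalue $1-\mu\sigma^2$, and the $2\times 2$ block whose off-diagonal entries you verify via $-2\sqrt{(1-a)/2}=-\sqrt{2-2a}$. The route, however, is genuinely lighter than the paper's. The paper proves this lemma by the same machinery as Lemma \ref{lm-B-decomposition}: it first conjugates by $\mathrm{diag}\{I,V^\prime\}$ with $V^\prime=V+\mathds{1}_N\,p\tran$ (after establishing that $V^\prime$ is invertible) so as to rewrite the lower blocks in terms of $\tA\tran$ and $\tA\tran-\mathds{1}_N\,p\tran$, and only then invokes the eigendecomposition of $A$ together with a permutation to extract $2\times 2$ blocks indexed by the eigenvalues of $A$. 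You bypass the $V^\prime$ construction entirely by noting that in this two-agent symmetric case with $P=\tfrac12 I_2$ the matrices $\tA$ and $V$ are simultaneously diagonalized by the orthonormal eigenvectors $u_1=\mathds{1}_2/\sqrt2$ and $u_2=(1,-1)\tran/\sqrt2$ of $A$ (since $V^2=(I_2-A)/4$ is a polynomial in $A$ and $V$ is its PSD square root), so the two subspaces $\mathrm{span}\{[u_j;0],[0;u_j]\}$ are $Q_d$-invariant and the restrictions can be read off directly; orthogonality of $u_1$ and $u_2$ then gives $\ell\tran X_R=0$ and $X_Lr=0$ for free, settling $X^{-1}X=I_4$. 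What your approach buys is a short, self-contained computation that exploits commutativity; what the paper's buys is uniformity with the general balanced left-stochastic setting of Lemma \ref{lm-B-decomposition}, where $V$ and $\tA\tran$ need not commute and the $V^\prime$ conjugation is actually necessary. Both are valid; yours is the cleaner specialization here.
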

It is observed that $Q_d$ always has an eigenvalue at $1$, which implies that $Q_d$ is not stable no matter what the step-size $\mu$ is. However, this eigenvalue does not influence the convergence of recursions \eqref{special-ed}. To see that, from Lemma \ref{lm-Q_d-decom} we have
\eq{\label{ngy7}
\cQ_d &= \cX \overline{\cQ}_d \cX^{-1} =
\ba{cc}
\hspace{-2mm}R & \cX_R\hspace{-2mm}
\ea
\ba{cc}
I_M & 0\\
0 & \cE_d
\ea
\ba{c}
L\tran\\
\cX_L
\ea
}
where $\cX_R=X_R\otimes I_M$, $\cX_L=X_L\otimes I_M$, $\cE_d = E_d\otimes I_M$, and
\eq{
R = \frac{1}{2}
\ba{c}
0 \\
\mathds{1}_2\otimes I_M
\ea,
\quad 
L=
\ba{c}
0 \\
\mathds{1}_2\otimes I_M
\ea.
}
Let
\eq{
\ba{c}
\widehat{\sz}_i\\
\check{\sz}_i
\ea=\cX^{-1}
\tzd_i=
\ba{c}
L\tran \tzd_i\\
\cX_L \tzd_i
\ea.\label{h82}
}
The exact diffusion recursion \eqref{special-ed} can be transformed into 
\eq{
\ba{c}
\widehat{\sz}_i\\
\check{\sz}_i
\ea = 
\ba{cc}
I_M & 0\\
0 & \cE_d
\ea
\ba{c}
\widehat{\sz}_{i-1}\\
\check{\sz}_{i-1}
\ea,
}
which can be further divided into two separate recursions:
\eq{
\widehat{\sz}_i = \widehat{\sz}_{i-1}, \quad 
\check{\sz}_i = \cE_d \check{\sz}_{i-1}.
}
Therefore, $\widehat{\sz}_i=0$ if $\widehat{\sz}_0=0$. Since $\sy_0=\cV \sw_0$ and $\sy_o^\star \in \mathrm{range}(\cV)$, we have $\tyd_0 = \sy_o^\star - \sy_0 \in \mathrm{range}(\cV)$. Therefore,
\eq{
\widehat{\sz}_0 &\overset{\eqref{h82}}{=} L\tran \tzd_0 = 
\ba{cc}
0 & (\mathds{1}_2\otimes I_M)\tran 
\ea
\ba{c}
\twd_0\\
\tyd_0
\ea \overset{\eqref{xcn987-2}}{=} 0,
}
As a result, we only need to focus on the other recursion:
\eq{\label{cngw7}
\check{\sz}_i = \cE_d \check{\sz}_{i-1},\quad 
\mbox{where}
\quad
\cE_d = E_d \otimes I_M.
}
If we select the step-size $\mu$ such that all eigenvalues of $E_d$ stay inside the unit-circle, then we guarantee the convergence of $\check{\sz_i}$ and, hence, $\tzd_i$.

\begin{lemma}[\sc Stability of exact diffusion]\label{lm-sta-ed}
	When $\mu$ is chosen such that 
	\eq{\label{mu-range}
		0 < \mu \sigma^2 < 2,
		}
	all eigenvalues of $E_d$ will lie inside the unit-circle, which implies that $\tzd_i$ in \eqref{special-ed} converges to $0$, i.e., $\tzd_i \to 0$.
\end{lemma}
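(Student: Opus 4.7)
The plan is to exploit the block-triangular structure of $E_d$ to reduce the spectral analysis to (i) a scalar eigenvalue and (ii) a $2\times 2$ principal block, and then apply the standard Jury (Schur--Cohn) stability criterion to the $2\times 2$ piece. Writing $\alpha\defeq 1-\mu\sigma^2$, we have from \eqref{cn9999} that $E_d$ is block diagonal with a $1\times 1$ block equal to $\alpha$ and a $2\times 2$ block
\begin{equation*}
F\;=\;\begin{pmatrix} \alpha a & -\sqrt{2-2a}\\[2pt] \alpha a\sqrt{(1-a)/2} & a\end{pmatrix}.
\end{equation*}
The scalar block contributes the eigenvalue $\alpha=1-\mu\sigma^2$, whose magnitude is strictly less than $1$ precisely when \eqref{mu-range} holds. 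So the substance of the argument is about $F$.

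Next I would compute $\mathrm{tr}(F)$ and $\det(F)$ explicitly. The trace is $a(1+\alpha)$. For the determinant, the cross-product term uses the identity $\sqrt{2-2a}\cdot\sqrt{(1-a)/2}=1-a$ (valid for $a\in(0,1)$), which collapses the off-diagonal contribution so that $\det(F)=\alpha a^2+\alpha a(1-a)=\alpha a$. Hence the characteristic polynomial of $F$ is
\begin{equation*}
p(\lambda)\;=\;\lambda^2-a(1+\alpha)\lambda+\alpha a.
\end{equation*}

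With this in hand I would invoke the Jury criterion: the roots of $\lambda^{2}+c_{1}\lambda+c_{0}$ lie strictly inside the unit disk iff $|c_0|<1$, $1+c_1+c_0>0$, and $1-c_1+c_0>0$. Verification is straightforward: $|c_0|=|\alpha|a<1$ follows from $|\alpha|<1$ and $a\in(0,1)$; the second condition telescopes nicely, $1-a(1+\alpha)+\alpha a=1-a>0$; and the third gives $1+a(1+2\alpha)$. Combining all three blocks shows $\sigma(E_d)$ lies strictly inside the unit disk, whence $\cE_d=E_d\otimes I_M$ has the same property and $\check{\sz}_i\to 0$ in \eqref{cngw7}. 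Together with $\widehat{\sz}_i\equiv 0$ and the change of basis \eqref{h82}, this yields $\tzd_i\to 0$.

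The only mildly delicate point is the third Jury inequality $1+a(1+2\alpha)>0$ when $\alpha$ is close to $-1$: here $1+2\alpha<0$, so one must check that $a\,(-1-2\alpha)<1$. Using $a\in(0,1)$ and $\alpha>-1$, one has $-1-2\alpha<1$, so $a\,(-1-2\alpha)<a<1$, which closes the argument. I expect this sign-tracking in the regime $\alpha\in(-1,-\tfrac12)$ to be the main (very minor) obstacle; the rest is direct algebra built on the determinant simplification via $\sqrt{2-2a}\cdot\sqrt{(1-a)/2}=1-a$.
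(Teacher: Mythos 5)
Your proof is correct, and it takes a genuinely different (and tidier) route than the paper's. The paper derives the same characteristic polynomial $\theta^2-(2-\mu\sigma^2)a\,\theta+(1-\mu\sigma^2)a=0$ for the lower-right $2\times 2$ block, but then solves it explicitly and splits into three cases according to the sign of the discriminant $\Delta=(2-\mu\sigma^2)^2a^2-4(1-\mu\sigma^2)a$: complex roots ($\Delta<0$, where the common modulus is $(1-\mu\sigma^2)a<1$), a double root ($\Delta=0$), and distinct real roots ($\Delta>0$), the last of which requires a monotonicity argument showing the larger root $\theta_1=f(a)$ is increasing in $a$ with $f(1)=1$. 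Your Jury (Schur--Cohn) test replaces all of that with three one-line inequalities --- $|\alpha|a<1$, $1-a>0$, and $1+a(1+2\alpha)>0$ --- and treats real and complex roots uniformly; your determinant simplification $\sqrt{2-2a}\cdot\sqrt{(1-a)/2}=1-a$ and trace computation agree with the paper's polynomial, and your handling of the only delicate sub-case $\alpha\in(-1,-\tfrac12)$ is sound. The concluding reduction ($\rho(\cE_d)=\rho(E_d)<1$, $\check{\sz}_i\to 0$, $\widehat{\sz}_i\equiv 0$, hence $\tzd_i\to 0$) matches the paper. What the paper's explicit-root computation buys is a concrete description of where the eigenvalues sit (useful for the companion instability result for EXTRA); what your approach buys is brevity and robustness against case-analysis slips.
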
 
\begin{proof}
	See Appendix \ref{app-sta-ed}.
\end{proof}
Next we turn to the EXTRA error recursion \eqref{special-extra}. 
\begin{lemma}[\sc Instability of EXTRA]\label{lm-sta-ex}
	When $\mu^e$ is chosen such that 
	\eq{\label{mu-range-extra}
		 \mu^e \sigma^2 \ge  a+1,
	}
	it holds that $\tzd_i^e$ generated through EXTRA \eqref{special-extra} will diverge.
\end{lemma}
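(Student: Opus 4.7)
The plan is to parallel the exact-diffusion stability proof (Lemma~\ref{lm-sta-ed}): perform an eigendecomposition of $Q_e$ analogous to \eqref{xn88}, peel off the trivial eigenvalue at $1$ via the same initialization argument, and then show that the remaining $3\times 3$ matrix $E_e$ has spectral radius at least one once $\mu^e\sigma^2 \ge a+1$. Divergence of $\tzd_i^e$ will follow from a standard linear-systems argument applied to the reduced recursion.

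First I would establish a decomposition $Q_e = X\,\bar{Q}_e\, X^{-1}$ with $\bar{Q}_e = \mathrm{diag}(1,\, E_e)$, using the same right/left eigenvector pair $r,\ell$ from \eqref{xcn3}; these depend only on $\tA\mathds{1}_2 = \mathds{1}_2$ and $V\mathds{1}_2 = 0$, which are identical for EXTRA and for exact diffusion. Since the EXTRA initialization \eqref{zn-0-extra} gives $\sy_0^e = \cV\sw_0^e$ and $\sy_o^\star\in\mathrm{range}(\cV)$, we still have $\tyd_0^e\in\mathrm{range}(\cV)$, and the same computation as in Lemma~\ref{lm-sta-ed} yields $\widehat{\sz}_0^e = 0$. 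The dynamics therefore reduce to $\check{\sz}_i^e = \cE_e\,\check{\sz}_{i-1}^e$ with $\cE_e = E_e\otimes I_M$, exactly parallel to \eqref{cngw7}.

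Next I would derive $E_e$ by repeating the elimination leading to \eqref{cn9999}, substituting the EXTRA $(1,1)$-block $\tA - \mu^e\sigma^2 I_2$ for the ED $(1,1)$-block $(1-\mu\sigma^2)\tA$. The expected outcome is
\begin{equation*}
E_e = \begin{pmatrix} 1-\mu^e\sigma^2 & 0 & 0\\ 0 & a-\mu^e\sigma^2 & -\sqrt{2-2a}\\ 0 & (a-\mu^e\sigma^2)\sqrt{(1-a)/2} & a\end{pmatrix},
\end{equation*}
so the eigenvalue $1-\mu^e\sigma^2$ is decoupled, and the other two eigenvalues come from the bottom-right $2\times 2$ block. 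Using $\sqrt{2-2a}\cdot\sqrt{(1-a)/2}=1-a$, its characteristic polynomial simplifies to
\begin{equation*}
p(\lambda) = \lambda^2 - (2a-\mu^e\sigma^2)\lambda + (a-\mu^e\sigma^2).
\end{equation*}
Evaluating at $\lambda=-1$ gives $p(-1) = 1+3a-2\mu^e\sigma^2$, which under the hypothesis $\mu^e\sigma^2 \ge a+1$ is at most $a-1 < 0$. Since $p(\lambda)\to+\infty$ as $\lambda\to-\infty$, there must be a real root $\lambda^\star \le -1$, so the spectral radius of $\cE_e$ is at least one.

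To close, I would invoke the standard linear-systems argument: for any $\sw_{-1}^e$ whose induced $\check{\sz}_0^e$ has a nonzero component along the eigenvector of $\cE_e$ associated with $\lambda^\star$, iteration gives $\|\check{\sz}_i^e\|\ge |\lambda^\star|^i\,|\alpha|\to\infty$, and since $\check{\sz}_i^e$ and $\tzd_i^e$ are linked by a fixed invertible transformation, $\tzd_i^e$ diverges. The main obstacle is the bookkeeping in deriving $E_e$ cleanly — the structural parallel with Lemma~\ref{lm-Q_d-decom} is unmistakable but the replacement $(1-\mu\sigma^2)\tA\rightsquigarrow \tA-\mu^e\sigma^2 I_2$ must be tracked through each entry; the genericity of the initial condition is minor, as the degenerate set is of measure zero and a non-generic initialization is not preserved by arbitrarily small perturbations of the data.
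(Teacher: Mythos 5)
Your proposal follows the same route as the paper's proof: the same reduction to $\check{\sz}_i^e = \cE_e\,\check{\sz}_{i-1}^e$ via the eigendecomposition of $Q_e$ and the range-space initialization argument giving $\widehat{\sz}_0^e=0$, the same matrix $E_e$, and the same characteristic polynomial $\lambda^2-(2a-\mu^e\sigma^2)\lambda+(a-\mu^e\sigma^2)$ for its lower-right $2\times 2$ block. The only difference is in the last step: you detect the unstable eigenvalue by evaluating $p(-1)=1+3a-2\mu^e\sigma^2\le a-1<0$ and invoking the intermediate value theorem, whereas the paper computes the two roots explicitly and bounds $|\theta_2^e|>1$; both are correct, and since your inequality is strict the root you obtain is strictly less than $-1$, which is exactly what the final divergence argument requires.
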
 
\begin{proof}
	See Appendix \ref{app-sta-extra}.
\end{proof}
Comparing the statements of Lemmas \ref{lm-sta-ed} and \ref{lm-sta-ex}, and since $1+a<2$, exact diffusion has a larger range of stability than EXTRA (i.e., exact diffusion is stable for a wider range of step-size values). In particular, {if agents place small weights on their own data}, i.e., when $a\approx 0$, the stability range for exact diffusion will be almost twice as large as that of EXTRA.} 

{\vspace{-2mm}
\section{Numerical Experiments}\label{sec-simulation}
In this section we compare the performance of the proposed exact diffusion algorithm with existing linearly convergent algorithms such as EXTRA\cite{shi2015extra}, DIGing\cite{nedich2016achieving}, and Aug-DGM\cite{xu2015augmented,nedic2016geometrically}. In all figures, the $y$-axis indicates the relative error, i.e.,  $\|\sw_i-\sw^o\|^2/\|\sw_0-\sw^o\|^2$, where $\sw_i=\col\{w_{1,i},\cdots,w_{N,i}\}\in \RR^{NM}$ and $\sw^o=\col\{w^o,\cdots,w^o\}\in \RR^{NM}$. All simulations employ the connected network topology
with $N = 20$ nodes shown in Fig.4 of Part I\cite{yuan2017exact1}.

\vspace{-4mm}
\subsection{Distributed Least-squares}\label{subsec:expe-ls}
In this experiment, we focus on the least-squares problem:
\eq{\label{prob-ls}
w^o = \argmin_{w\in \RR^M}\quad \frac{1}{2}\sum_{k=1}^{N} \|U_k w - d_k\|^2.
}
The simulation setting is the same as Sec. VI.A of Part I\cite{yuan2017exact1}.

In the simulation we compare exact diffusion with EXTRA, DIGing, and Aug-DGM. 
These algorithms work with symmetric doubly-stochastic or right-stochastic matrices $A$. Therefore, we now employ doubly-stochastic matrices for a proper comparison. 
Moreover, there are two information combinations per iteration in DIGing and Aug-DGM algorithms, and each information combination corresponds to one round of communication. In comparison, there is only one information combination (or round of communication) in EXTRA and exact diffusion. For fairness we will compare the algorithms based on the amount of communications, rather than the iterations. {\color{black}In the figures, we use one unit amount of communication to represent $2ME$ communicated variables, where $M$ is the dimension of the variable while $E$ is the number of edges in the network.} The problem setting is the same as in the simulations in Part I, except that $A$ is generated through the Metropolis rule \cite{sayed2014adaptation}. {\color{black}In the top plot in Fig. \ref{fig:4-algs}, all algorithms are carefully adjusted to reach their fastest convergence. It is observed that exact diffusion is slightly better than EXTRA, and both of them are more communication efficient than DIGing and Aug-DGM.} When a larger step-size $\mu=0.02$ is chosen for all algorithms, it is observed that EXTRA and DIGing diverge while exact diffusion and Aug-DGM converge, and exact diffusion is much faster than Aug-DGM algorithm.

{\color{black} We also compare exact diffusion with Push-EXTRA \cite{xi2015linear,zeng2015extrapush} and Push-DIGing \cite{nedich2016achieving} for non-symmetric combination policies. We consider the unbalanced network topology shown in Fig. 6 in Part I \cite{yuan2017exact1}. The combination matrix is generated through the averaging rule. Note that the Perron eigenvector $p$ is known beforehand for such combination matrix $A$, and we can therefore substitute $p$ directly into the recursions of Push-EXTRA and Push-DIGing. In the simulation, all algorithms are adjusted to reach their fastest convergence. In Fig. \ref{fig:5-algs}, it is observed that exact diffusion is the most communication efficient among all three algorithms. This figure illustrates that exact diffusion has superior performance for locally-balanced combination policies.
}

\begin{figure}
	\centering
	\includegraphics[scale=0.38]{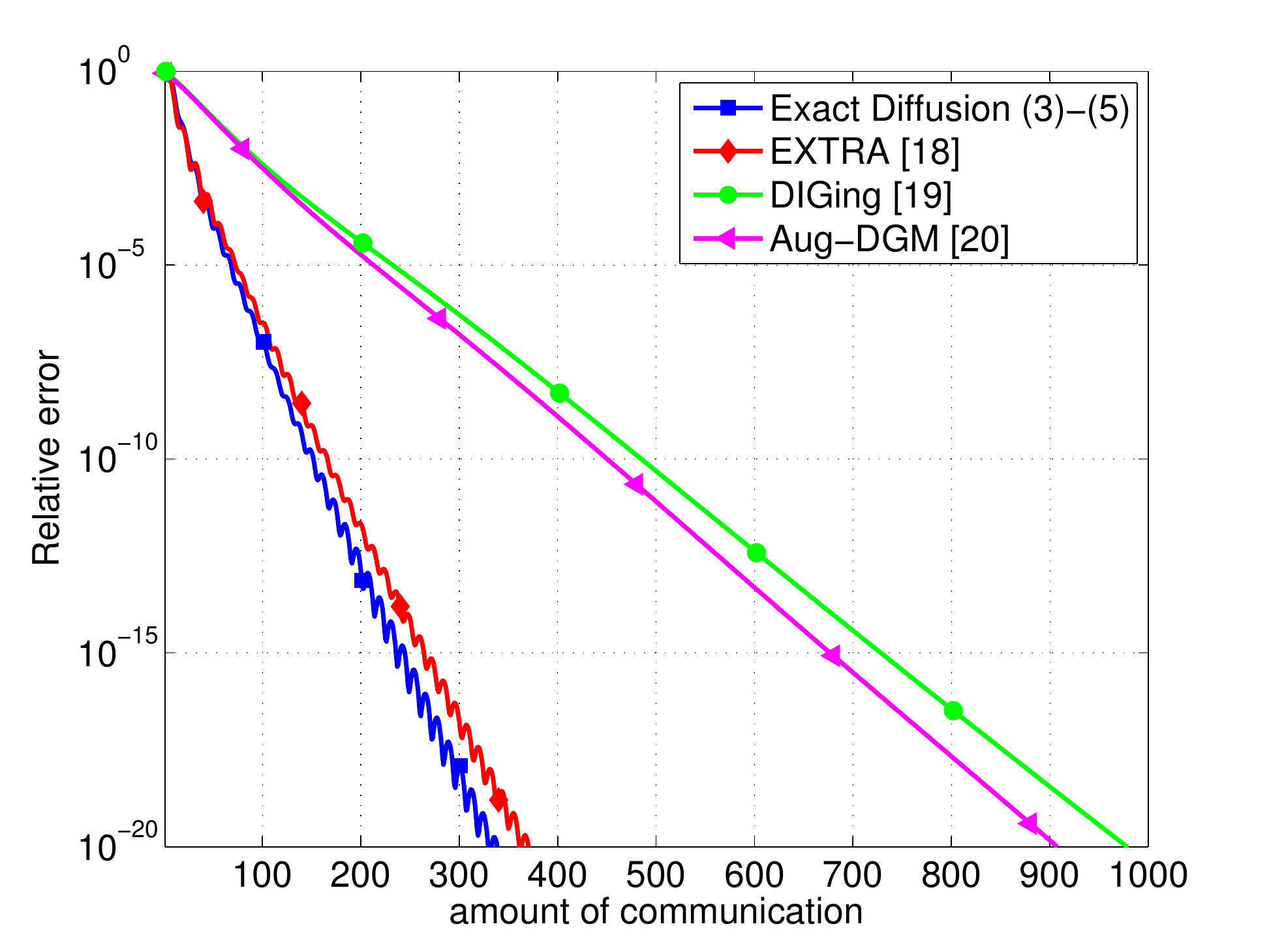}
	\includegraphics[scale=0.38]{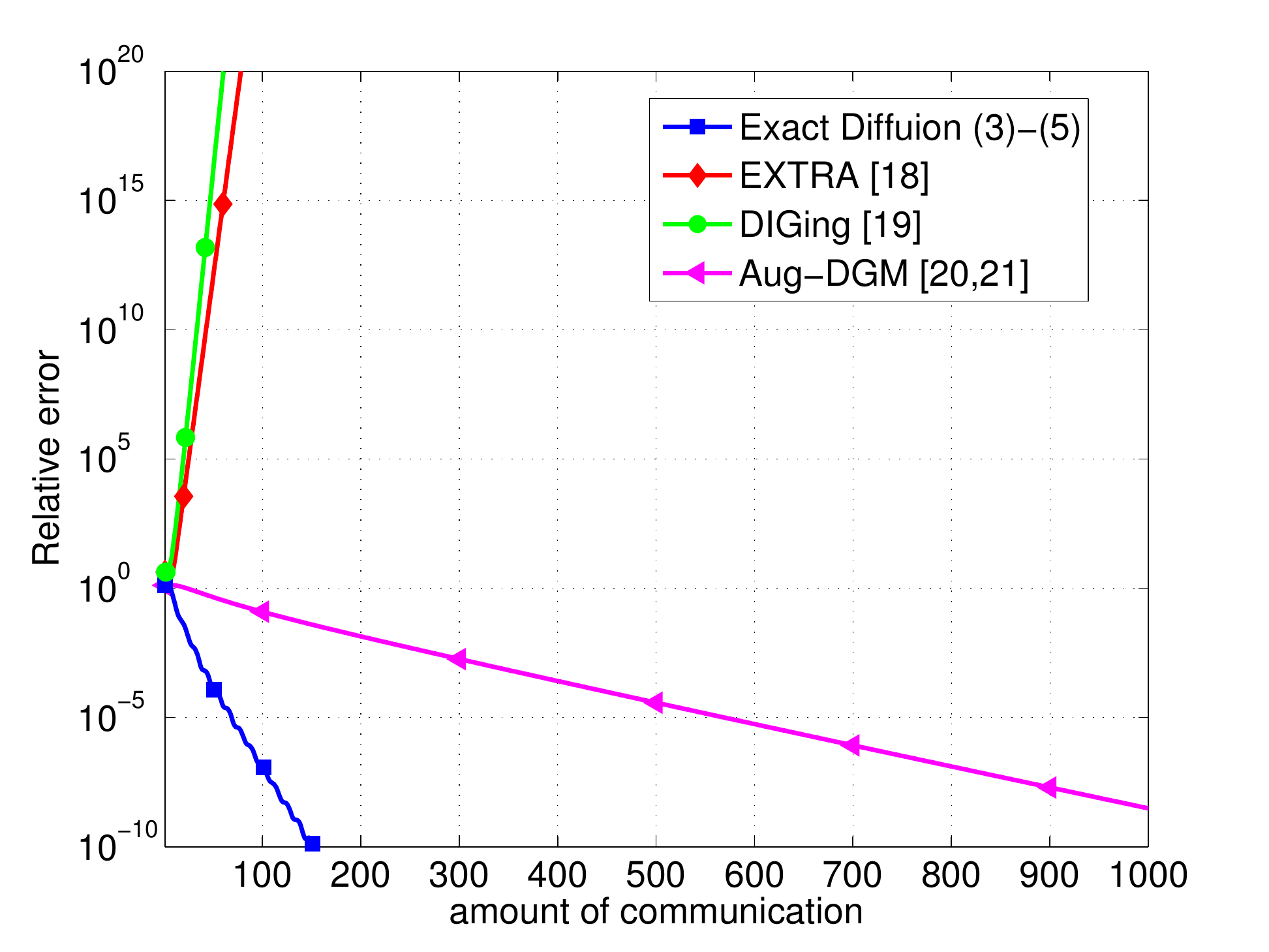}
	\vspace{-3mm}
	\caption{\footnotesize Convergence comparison between exact diffusion, EXTRA, DIGing, and Aug-DGM for distributed least-squares problem \eqref{prob-ls}. {\color{black}In the top plot, the step-sizes for Exact diffusion, EXTRA, DIGing and Aug-DGM are 0.013, 0.007, 0.0028 and 0.003. In the bottom plot, all step-sizes are set as 0.04.}}
	\label{fig:4-algs}
	\vspace{-5mm}
\end{figure} 

\begin{figure}
	\centering
	\includegraphics[scale=0.38]{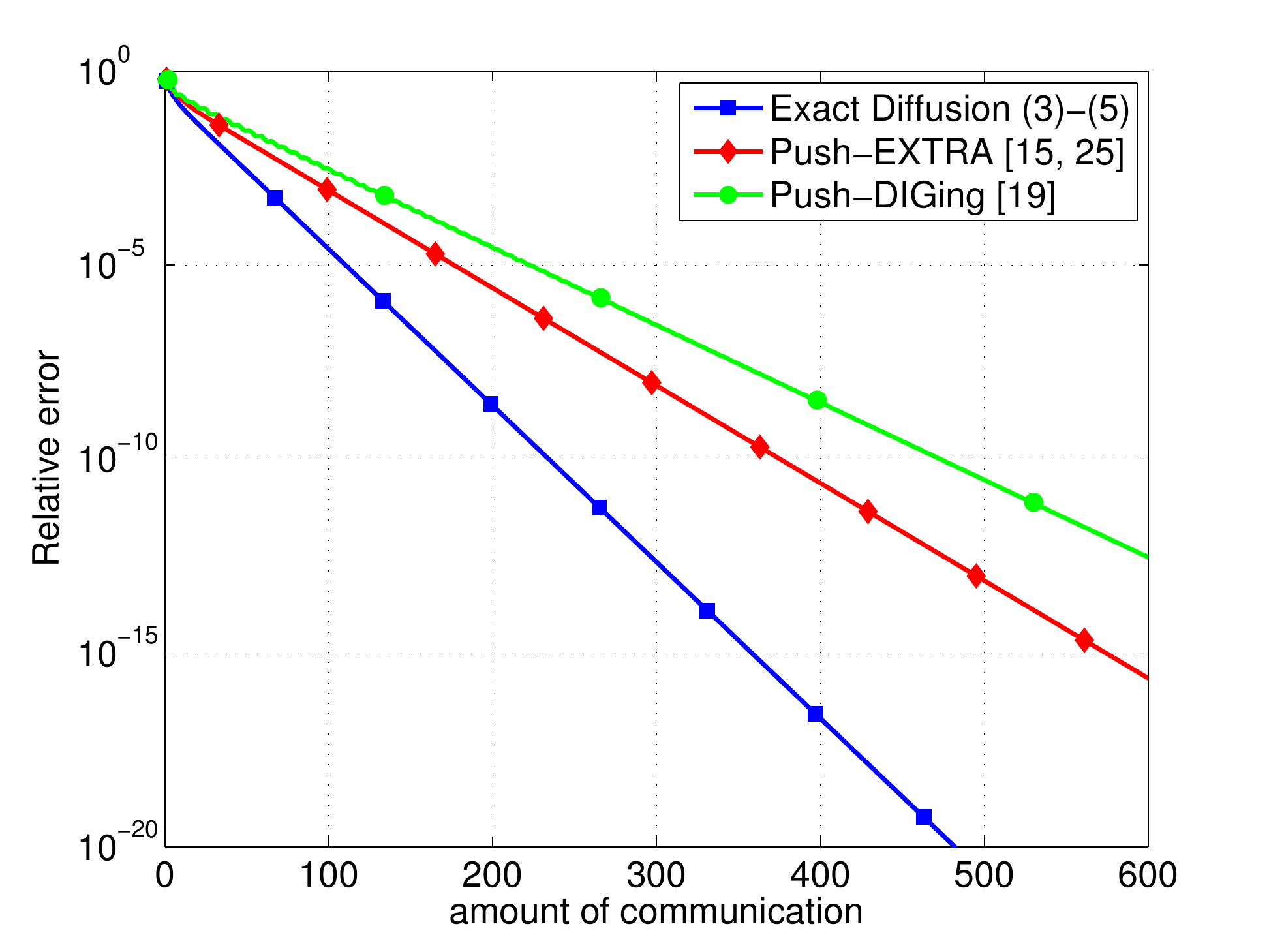}
	\vspace{-3mm}
	\caption{\footnotesize Convergence comparison between exact diffusion, EXTRA, DIGing, and Aug-DGM for distributed least-squares problem \eqref{prob-ls} with non-symmetric combination policy.}
	\label{fig:5-algs}
	\vspace{-5mm}
\end{figure}

}

\vspace{-4mm}
\subsection{Distributed Logistic Regression}
We next consider a pattern classification scenario. Each agent $k$ holds local data samples $\{h_{k,j}, \gamma_{k,j}\}_{j=1}^L$, where $h_{k,j}\in \RR^M$ is a feature vector and $\gamma_{k,j}\in \{-1,+1\}$ is the corresponding label. Moreover, the value $L$ is the number of local samples at each agent. All agents will cooperatively solve the regularized logistic regression problem:
\eq{\label{prob-lr}
w^o=\argmin_{w\in \RR^M} \sum_{k=1}^{N} \Big[\frac{1}{L}\sum_{\ell=1}^{L}\ln\big(1\hspace{-1mm}+\hspace{-1mm}\exp(-\gamma_{k,\ell} h_{k,\ell}\tran w)\big) \hspace{-1mm}+\hspace{-1mm} \frac{\rho}{2}\|w\|^2 \Big].
}
The simulation setting is the same as Sec. VI.B of Part I\cite{yuan2017exact1}.

In this simulation, we also compare exact diffusion with EXTRA, DIGing, and Aug-DGM. A symmetric doubly-stochastic $A$ is generated through the Metropolis rule. {\color{black}In the top plot in Fig. \ref{fig:lr-4-algs}, all algorithms are carefully adjusted to reach their fastest convergence. It is observed that exact diffusion is the most communication efficient among all algorithms.} When a larger step-size $\mu=0.04$ is chosen for all algorithms in the bottom plot in Fig. \ref{fig:lr-4-algs}, it is observed that both exact diffusion and Aug-DGM are still able to converge linearly to $w^o$, while EXTRA and DIGing fail to do so. Moreover, exact diffusion is observed much more communication efficient than Aug-DGM.
\begin{figure}
	\centering
	\includegraphics[scale=0.39]{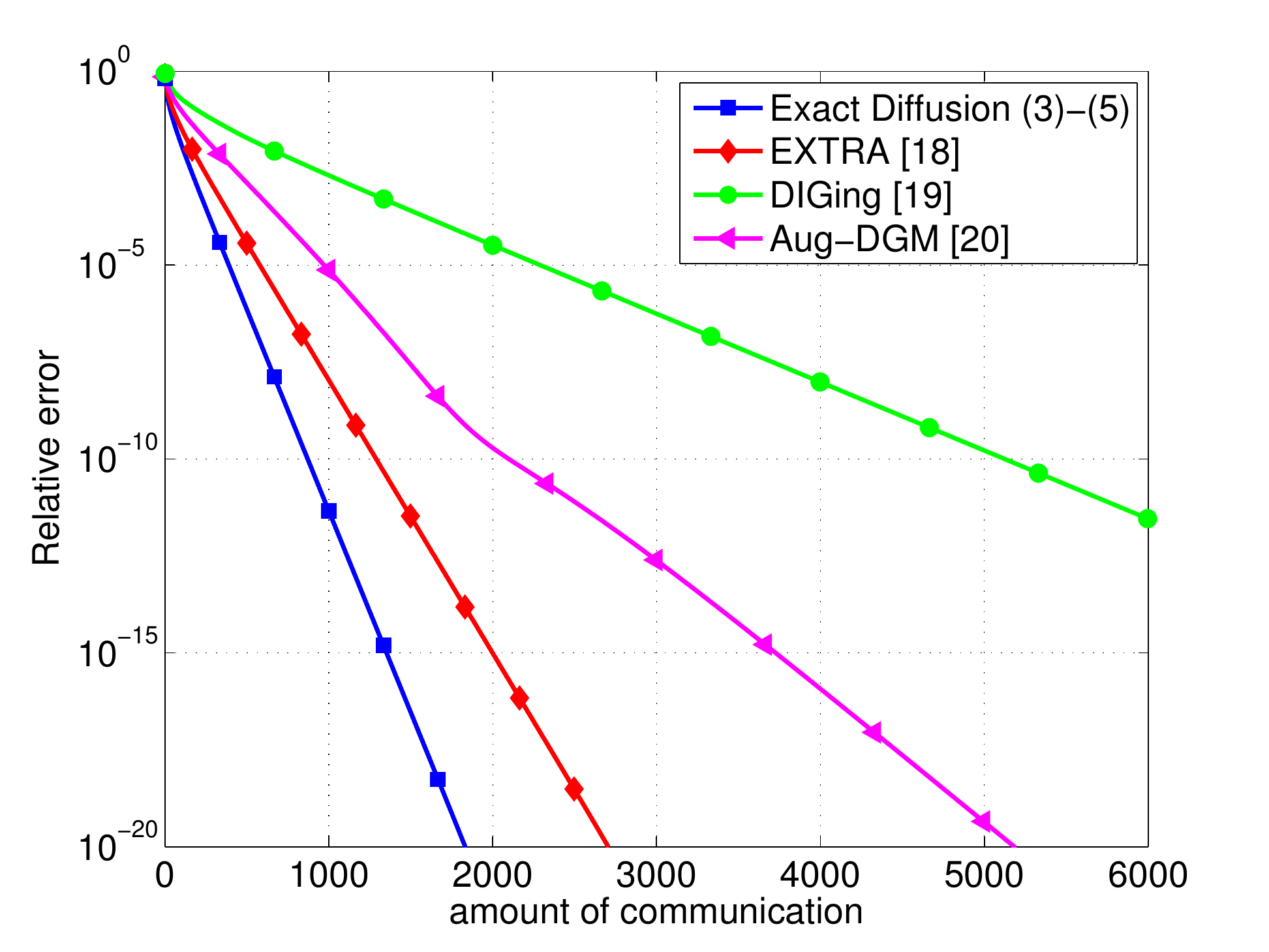}
	\includegraphics[scale=0.39]{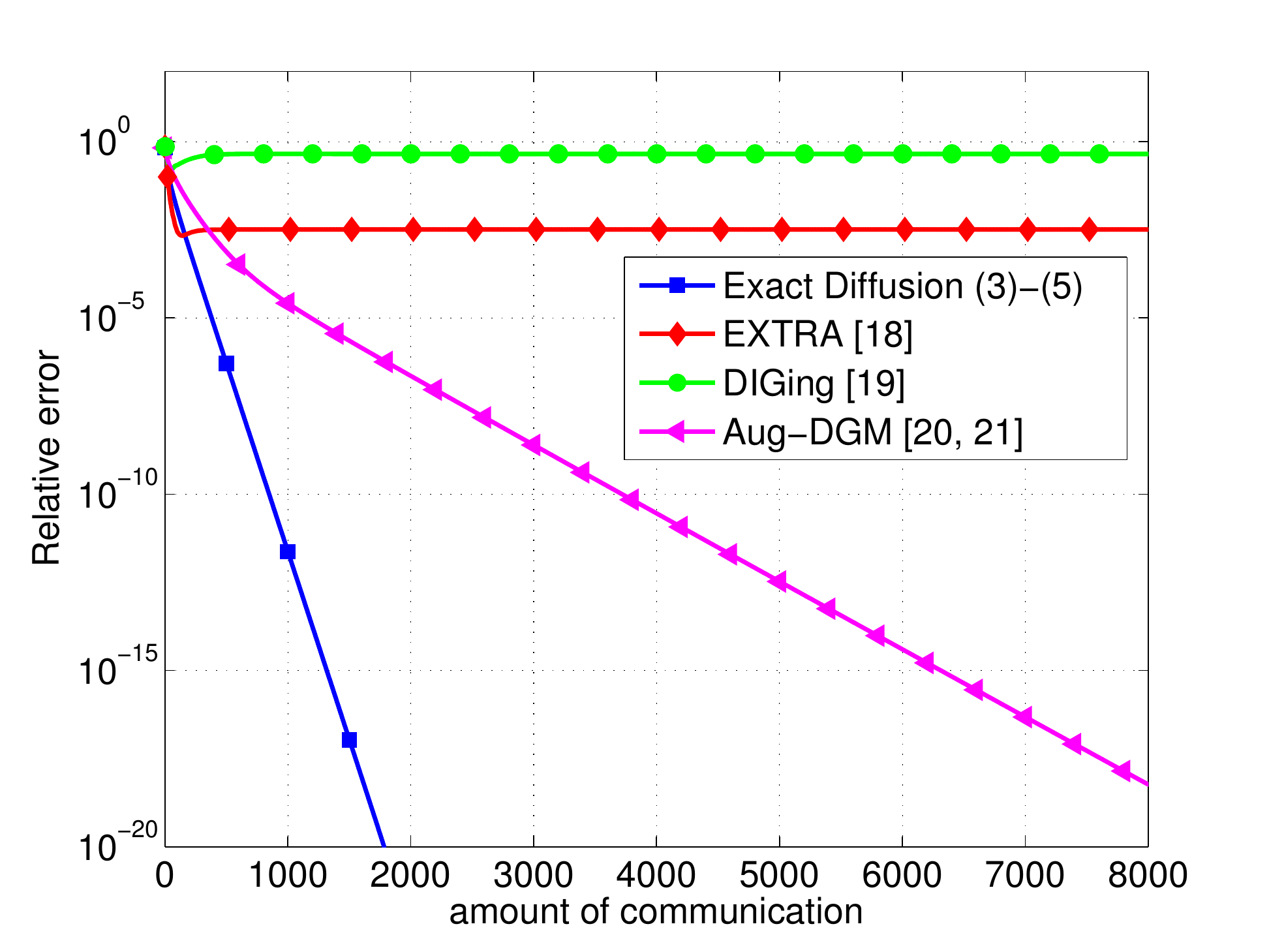}
	\vspace{-3mm}
	\caption{\footnotesize Convergence comparison between exact diffusion, EXTRA, DIGing, and Aug-DGM for problem \eqref{prob-lr}. {\color{black}In the top plot, the step-sizes for Exact Diffusion, EXTRA, DIGing and AUG-DGM are 0.041, 0.028, 0.014 and 0.033.} In the bottom plot, all step-sizes are set as 0.04.}
	\label{fig:lr-4-algs}
	\vspace{-5mm}
\end{figure}


\appendices
\section{Proof of Lemma \ref{lm-B-decomposition}}\label{appdx-Lemma-fd}

	Define $V^\prime \define V + \mathds{1}_N\, p\tran \in \RR^{N\times N}$,
	we claim that $V^\prime$ is a full rank matrix. Suppose to the contrary that there exists some $x\neq 0$ such that $V'x=0$, i.e.,$(V + \mathds{1}_N\, p\tran) x = Vx + (p\tran x)\mathds{1}_N=0,$
	which requires
	\eq{\label{xch388}
		Vx = - (p\tran x)\mathds{1}_N.
	}
	When $p\tran x \neq 0$, relation \eqref{xch388} implies that $\mathds{1}_N \in \mathrm{range}(V)$. {However, from Lemma \ref{lm:null-V} we know that
	\eq{
		\mathrm{null}(V)=\mathrm{span}\{\mathds{1}_N\} 
		\Longleftrightarrow&\ \mathrm{range}(V\tran)^\perp = \mathrm{span}\{\mathds{1}_N\} \nnb
		\Longleftrightarrow&\ \mathrm{range}(V)^\perp = \mathrm{span}\{\mathds{1}_N\}, \label{i2378d}
	}
	where the last ``$\Leftrightarrow$" holds because $V$ is symmetric.} Relation \eqref{i2378d} is contradictory to $\mathds{1}_N \in \mathrm{range}(V)$. Therefore, $V^\prime x \neq 0$. When $p\tran x =0$, relation \eqref{xch388} implies that $Vx=0$, which together with Lemma \ref{lm:null-V} implies that $x=c \mathds{1}_N$ for some constant $c\neq 0$. However, since $p\tran \mathds{1}_N=1$, we have $p\tran x = c \neq 0$, which also contradicts with $p\tran x =0$. As a result, $V'$ has full rank and hence $\left(V'\right)^{-1}$ exists.


	
	With $V^\prime=V + \mathds{1}_N$ and the fact $V\mathds{1}_N = 0$ {(see Lemma \ref{lm:null-V})}, we also have
	\eq{
		& VV^\prime =V (V + \mathds{1}_N\, p\tran) = V^2 + V \mathds{1}_N\, p\tran = V^2, \label{VVprime-1}\\
		& V^\prime (I_N - \mathds{1}_N\, p\tran) = (V + \mathds{1}_N\, p\tran)(I_N - \mathds{1}_N\, p\tran)=V.\label{VVprime-2}
	}
	With relations \eqref{VVprime-1} and \eqref{VVprime-2}, we can verify that 
	\eq{\label{B-deco-1}
		B \hspace{-1mm}&= \hspace{-1mm}
		\ba{cc}
		\hspace{-2mm}I_N\hspace{-1mm} & \hspace{-1mm}0\hspace{-2mm}\\
		\hspace{-2mm}0\hspace{-1mm} & \hspace{-1mm}{V}^\prime\hspace{-2mm}
		\ea\hspace{-1.5mm}
		\ba{cc}
		\hspace{-2mm}\tA\tran \hspace{-1mm} & \hspace{-0.5mm} - P^{-1}V^2\hspace{-2mm} \\
		\hspace{-2mm}(V^\prime)^{-\hspace{-0.4mm}1}V\tA\tran \hspace{-0.5mm} & \hspace{-1mm} I_N \hspace{-0.8mm}-\hspace{-0.8mm} (V^\prime)^{-\hspace{-0.4mm}1} V P^{-\hspace{-0.4mm}1} V^2\hspace{-2mm}
		\ea\hspace{-1.5mm}
		\ba{cc}
		\hspace{-2mm}I_N\hspace{-1mm} & \hspace{-1mm}0\hspace{-2mm}\\
		\hspace{-2mm}0\hspace{-1mm} & \hspace{-1mm}({V}^\prime)^{-1}\hspace{-2mm}
		\ea \nnb
		&\overset{(a)}{=} \hspace{-1mm}
		\ba{cc}
		\hspace{-2mm}I_N\hspace{-1mm} & \hspace{-1mm}0\hspace{-2mm}\\
		\hspace{-2mm}0\hspace{-1mm} & \hspace{-1mm}{V}^\prime\hspace{-2mm}
		\ea\hspace{0mm}
		\ba{cc}
		\hspace{-2mm}\tA\tran \hspace{-1mm} & \hspace{-0.5mm} \tA\tran - I_N \hspace{-2mm} \\
		\hspace{-2mm}\tA\tran - \mathds{1}_N\, p\tran \hspace{-0.5mm} & \hspace{-1mm} \tA\tran \hspace{-2mm}
		\ea\hspace{0mm}
		\ba{cc}
		\hspace{-2mm}I_N\hspace{-1mm} & \hspace{-1mm}0\hspace{-2mm}\\
		\hspace{-2mm}0\hspace{-1mm} & \hspace{-1mm}({V}^\prime)^{-1}\hspace{-2mm}
		\ea
	}
	where in (a) we used $V^2\hspace{-1mm}=\hspace{-1mm}(P \hspace{-1mm}-\hspace{-1mm} PA)/2$ and $\tA\tran \hspace{-1mm}=\hspace{-1mm} (I_N \hspace{-1mm}+\hspace{-1mm} A\tran)/2$. Using $A=Y\Lambda Y^{-1}$ from Lemma 3 of Part I\cite{yuan2017exact1}, we have
	%
	\eq{\label{m88}
		\tA\tran = (Y^{-1})\tran \overline{\Lambda} Y\tran,\quad 
		\tA\tran \hspace{-1mm}-\hspace{-1mm} I_N \hspace{-0.5mm}=\hspace{-0.5mm} (Y^{-1})\tran (\overline{\Lambda} \hspace{-1mm}-\hspace{-1mm} I_N) Y\tran
	}
	where $\overline{\Lambda} \define  \left(I_N + \Lambda\right)/2$. Obviously, $\overline{\Lambda}>0$ is also a real diagonal matrix. If we let $\overline{\Lambda}=\mathrm{diag}\{\lambda_1(\tA),\cdots, \lambda_N(\tA)\}$, it holds that
	\eq{\label{tA&tA-I}
		\lambda_k(\tA) = (\lambda_k(A)+1)/{2} >0,\quad \forall\, k =1,\cdots, N,
	}
	and $\lambda_1(\tA)=1$. Moreover, we can also verify that 
	\eq{\label{tA-1p'}
		\tA\tran - \mathds{1}_N\, p\tran = (Y^{-1})\tran \overline{\Lambda}_1
		Y\tran,
	}
	where $
		\overline{\Lambda}_1={\rm diag}\{0, \lambda_2(\tA),\cdots, \lambda_N(\tA)\}.$
	This is because the vectors $\mathds{1}_N\tran$ and $p$ are the left- and right-eigenvectors of $\tA$.
	Combining relations \eqref{tA&tA-I} and \eqref{tA-1p'}, we have
	\eq{\label{B-deco-2}
		& \ba{cc}
		\hspace{-2mm}\tA\tran \hspace{-1mm} & \hspace{-0.5mm} \tA\tran - I_N \hspace{-2mm} \\
		\hspace{-2mm}\tA\tran - \mathds{1}_N\, p\tran \hspace{-0.5mm} & \hspace{-1mm} \tA\tran \hspace{-2mm}
		\ea \nnb
		=&
		\ba{cc}
		\hspace{-2mm}(Y^{-1})\tran\hspace{-1mm} & \hspace{-1mm}0\hspace{-2mm}\\
		\hspace{-2mm}0\hspace{-1mm} & \hspace{-1mm}(Y^{-1})\tran\hspace{-2mm}
		\ea\hspace{0mm}
		\ba{cc}
		\hspace{-2mm}\overline{\Lambda} \hspace{-1mm} & \hspace{-0.5mm} \overline{\Lambda} - I_N \hspace{-2mm} \\
		\hspace{-2mm}\ \overline{\Lambda}_1 \hspace{-0.5mm} & \hspace{-1mm} \overline{\Lambda} \hspace{-2mm}
		\ea\hspace{0mm}
		\ba{cc}
		\hspace{-2mm}Y\tran\hspace{-1mm} & \hspace{-1mm}0\hspace{-2mm}\\
		\hspace{-2mm}0\hspace{-1mm} & \hspace{-1mm} Y\tran \hspace{-2mm}
		\ea.
	}
	With permutation operations, it holds that
	\eq{\label{B-deco-3}
		\ba{cc}
		\hspace{-2mm}\overline{\Lambda} \hspace{-1mm} & \hspace{-0.5mm} \overline{\Lambda} - I_N \hspace{-2mm} \\
		\hspace{-2mm}\ \overline{\Lambda}_1 \hspace{-0.5mm} & \hspace{-1mm} \overline{\Lambda} \hspace{-2mm}
		\ea
		=&
		\Pi
		\ba{cccc}
		E_1 & 0 & \cdots & 0 \\
		0 & E_2 & \cdots & 0 \\
		\vdots & \vdots & \ddots & \vdots \\
		0 & 0 & \cdots & E_N
		\ea
		\Pi\tran,
	}
	where $\Pi \in \RR^{N\times N}$ is a permutation matrix, and
	\vspace{1mm} 
	\eq{
		E_1 \hspace{-1mm}=\hspace{-1mm} 
		\ba{cc}
		\hspace{-2mm}1 & 0 \hspace{-2mm}\\
		\hspace{-2mm}0 & 1\hspace{-2mm}
		\ea\hspace{-1mm}, \ 
		E_k\hspace{-1mm}=\hspace{-1mm}
		\ba{cc}
		\hspace{-2mm}\lambda_k(\tA) & \lambda_k(\tA)-1\hspace{-2mm} \\
		\hspace{-2mm}\lambda_k(\tA) & \lambda_k(\tA)\hspace{-2mm}
		\ea\hspace{-1mm},\ \forall k = 2,\cdots, N. 
	}
	Now we seek the eigenvalues of $E_k$. Let $d$ denote an eigenvalue of $E_k$. The characteristic polynomial of $E_k$ is 
	\eq{
		d^2-2\lambda_k(\tA) d + \lambda_k(\tA) = 0. 
	}
	Therefore, we have
	\eq{
		d=\frac{2\lambda_k(\tA) \pm \sqrt{4\lambda_k^2(\tA) - 4 \lambda_k(\tA)}}{2}. 
	}
	Since $\lambda_k(\tA)\in(0,1)$ when $k=2,3,\cdots,N$, it holds that  $4\lambda_k^2(\tA) < 4 \lambda_k(\tA)$. Therefore, $d$ is a complex number, and its magnitude is $\sqrt{\lambda_k(\tA)}$. Therefore, $E_k$ can be diagonalized as 
%
	\eq{\label{B-deco-5}
		E_k = Z_k 
		\ba{cc}
		d_{k,1} & 0\\
		0 & d_{k,2}
		\ea
		Z_k^{-1}
	}
	where $d_{k,1}$ and $d_{k,2}$ are complex numbers and 
	\eq{\label{B-deco-6}
		|d_{k,1}| = |d_{k,2}| = \sqrt{\lambda_k(\tA)} < 1.
	}
	Define $Z$ and $\overline{X}$ as
	\eq{
	Z\define& \diag\{I_2, Z_2,Z_3, \cdots, Z_N\}
	}
	\eq{
	\overline{X} \define& \ba{cc}I_N&0\\0&V'\ea \ba{cc}(Y^{-1})^{\sf T}&0\\0& (Y^{-1})^{\sf T}\ea \Pi\, Z
	}
	Since each factor in $\overline{X}$ is invertible, $\overline{X}^{\hspace{0.2mm}-1}$ must exist. Combining \eqref{B-deco-1} and \eqref{B-deco-2}--\eqref{B-deco-6}, we finally  arrive at 
	\eq{\label{xcnh26}
	B = \overline{X} D \overline{X}^{-1}, \mbox{ where } D = \ba{cc} I_2 & 0\\0 & D_1 \ea,
	}
	and $D_1$ has the structure claimed in \eqref{uwehn}.\vspace{-1mm}

	Therefore, we have established so far the form of the eigenvalue decomposition of $B$. In this decomposition, each $k$-th column of $\overline{X}$ is a right-eigenvector associated with the eigenvalue $D(k,k)$, and each $k$-th row of $\overline{X}^{-1}$ is the left-eigenvector associated with $D(k,k)$. Recall, however, that eigenvectors are not unique. We now verify that we can find eigenvector matrices $\overline{X}$ and $\overline{X}^{-1}$ that have the structure shown in \eqref{X and X_inv} and \eqref{R and L}. To do so, it is sufficient to examine whether the two columns of $R$ are independent right-eigenvectors associated with eigenvalue $1$, and the two rows of $L$ are independent left-eigenvectors associated with $1$. Let 
	\eq{\label{tsdhb-app-0}
		R=
		\ba{cc}
		\hspace{-1mm}r_1 & r_2\hspace{-1mm}
		\ea,\mbox{ where }\ 
		r_1 \hspace{-1mm} \define \hspace{-1mm}
		\ba{c}
		\hspace{-1.8mm}\mathds{1}_N \hspace{-1.8mm}\\
		\hspace{-1.8mm}0\hspace{-1.8mm}
		\ea, \quad 
		r_2 \hspace{-1mm} \define \hspace{-1mm}
		\ba{c}
		\hspace{-1.8mm}0\hspace{-1.8mm}\\
		\hspace{-1.8mm}\mathds{1}_N\hspace{-1.8mm}
		\ea.
	}
	Obviously, $r_1$ and $r_2$ are independent. Since
	\eq{
		Br_1 = r_1, \quad Br_2 = r_2,
	}
	we know $r_1$ and $r_2$ are right-eigenvectors associated with eigenvalue $1$. As a result, an eigenvector matrix $X$ can be chosen in the form $
	X=
	\ba{ccc}
	R& \vline & X_R
	\ea,
	$
	where each $k$-th column of $X_R$ corresponds to the right-eigenvector associated with eigenvalue $D_1(k,k)$. Similarly, we let 
	\eq{\label{tsdhb-app-0-L}
		L=
		\ba{c}
		\hspace{-1mm}\ell_1\tran\hspace{-1mm}\\
		\hspace{-1mm}\ell_2\tran\hspace{-1mm}
		\ea, \mbox{ where }\ 
		\ell_1\define
		\ba{c}
		\hspace{-1mm}p\hspace{-1mm}\\
		\hspace{-1mm}0\hspace{-1mm}
		\ea,
		\ell_2\define
		\ba{c}
		\hspace{-1mm}0\hspace{-1mm}\\
		\hspace{-1mm}\frac{1}{N}\mathds{1}_N\hspace{-1mm}
		\ea.
	}
	It is easy to verify that  $\ell_1$ and $\ell_2$ are independent left-eigenvectors associated with eigenvalue $1$. Moreover, since $LR=I_2$, $X^{-1}$ has the structure
	\eq{\label{nh289}
	X^{-1}=\ba{c}
	L\\
	X_L
	\ea,
	}
	where each $k$-th row of $X_L$ corresponds to a left-eigenvector associated with eigenvalue $D_1(k,k)$.

\section{Proof of Theorem \ref{theom-convergence}}
	\label{app-theom-conv}
From the first line of recursion \eqref{final-recursion}, we have
\eq{\label{x_bar-recursion}
	\hspace{-1mm}
	\bar{\sx}_i \hspace{-1mm}= \hspace{-1mm} \left(I_M \hspace{-1mm} - \hspace{-1mm} {\overline{\cP}\tran \hspace{-0.8mm}\cM\cH_{i\hspace{-0.3mm}-\hspace{-0.3mm}1}\cI}\right)\bar{\sx}_{i\hspace{-0.3mm}-\hspace{-0.3mm}1} \hspace{-1mm} - \hspace{-1mm}\frac{1}{c}
	\overline{\cP}\tran \hspace{-0.8mm}\cM\cH_{i-1}\cX_{R,u} \check{\sx}_{i-1}.
} 
Squaring both sides and using Jensen's inequality \cite{boyd2004convex} gives
\eq{\label{x_bar_square}
	\hspace{-1mm} \|\bar{\sx}_i\|^2 \hspace{-1mm}=\hspace{-1mm}&\ \left\|\hspace{-1mm}\left(I_M \hspace{-1mm} - \hspace{-1mm} {\overline{\cP}\tran \hspace{-0.8mm}\cM\cH_{i\hspace{-0.3mm}-\hspace{-0.3mm}1}\cI}\right) \bar{\sx}_{i-1} \hspace{-1mm}-\hspace{-1mm} \frac{1}{c}
	\overline{\cP}\tran\cM\cH_{i-1}\cX_{R,u} \check{\sx}_{i-1}\hspace{-0.5mm}\right\|^2 \nnb
	\le &\ \frac{1}{1-t}\left\|I_M \hspace{-1mm} - \hspace{-1mm} {\overline{\cP}\tran \hspace{-0.8mm}\cM\cH_{i\hspace{-0.3mm}-\hspace{-0.3mm}1}\cI}\right\|^2\|\bar{\sx}_{i-1}\|^2 \nnb
	&\quad + \frac{1}{t}\frac{1}{c^2}\|\overline{\cP}\tran\cM\cH_{i-1}\cX_{R,u}\|^2 \|\check{\sx}_{i-1}\|^2
}
for any $t\in (0,1)$. Using $\tau_k=\mu_k/\mu_{\max}$, we obtain
\eq{
	\hspace{-2mm} {\overline{\cP}\tran\cM\cH_{i-1}\cI} =&\ \mu_{\max}\sum_{k=1}^{N}p_k \tau_k H_{k,i-1}\vspace{-5mm} \nnb
	\hspace{2mm} \overset{\eqref{H-properties}}{\ge} &\ {\mu_{\max}p_{k_o}\tau_{k_o}\nu I_M} \define {\sigma_{11} \mu_{\max}I_M},
}
where $\sigma_{11} = p_{k_o} \tau_{k_o}\nu$. Similarly, we can also obtain
\eq{
	{\overline{\cP}\tran\cM\cH_{i-1}\cI} & = \mu_{\max}\sum_{k=1}^{N}p_k \tau_k H_{k,i-1}\nnb
	&\overset{\eqref{H-properties}}{\le} \left(\sum_{k=1}^{N}p_k \tau_k \right)\delta\mu_{\max}  I_M \hspace{-1.5mm} \overset{(a)}{\le} \delta\mu_{\max}  I_M,
}
where inequality $(a)$ holds because $\tau_k<1$ and $\sum_{k=1}^{N}p_k=1$. It is obvious that $\delta >\sigma_{11}$. As a result, we have
\eq{\label{xzcn8237}
	\hspace{-2mm}(1\hspace{-0.8mm}-\hspace{-0.8mm}\delta \mu_{\max})I_M \hspace{-0.8mm}\le\hspace{-0.8mm} I_{M} \hspace{-0.8mm}-\hspace{-0.8mm} {\overline{\cP}\tran\cM\cH_{i-1}\cI} \le (1\hspace{-0.8mm}-\hspace{-0.8mm}\sigma_{11}\mu_{\max})I_M
}
which implies that when the step-size  satisfy  
\eq{\label{xchayu}
	\mu_{\max}<1/\delta,
} 
it will hold that
\eq{\label{uiox98}
	&\ \|I_{M} \hspace{-0.8mm}-\hspace{-0.8mm} {\overline{\cP}\tran\cM\cH_{i-1}\cI} \|^2 \le  (1-\sigma_{11}\mu_{\max})^2.
}
On the other hand, we have
\eq{\label{bguj87}
	\frac{1}{c^2}\|\overline{\cP}\tran\cM\cH_{i-1}\cX_{R,u}\|^2&\ \le \frac{1}{c^2}\|\overline{\cP}\tran\cM\|^2 \|\cH_{i-1}\|^2 \|\cX_{R,u}\|^2 \nn
}
\eq{
	&\ \overset{(a)}{\le} \frac{1}{c^2}\left(\sum_{k=1}^{N}(\tau_k p_k)^2\right)\delta^2 \|\cX_{R,u}\|^2 \mu^2_{\max} \nnb
	&\ \overset{(b)}{\le} \frac{p_{\max}}{c^2} \delta^2 \|\cX_{R,u}\|^2 \mu^2_{\max}
}
where 
 inequality (b) holds because $\tau_k<1$, $p_k^2 < p_k p_{\max}$ (where $p_{\max}= \max_k\{p_k\}$) and $\sum_{k=1}^{N}p_k=1$. {Inequality (a) follows by noting that $\overline{\cP}\tran \cM = \mu_{\max}[p_1\tau_1,\cdots,p_N \tau_N]\otimes I_M$.
Introducing $s=[p_1\tau_1,p_2\tau_2,\cdots,p_N \tau_N]\tran\in \RR^{N}$, we have
\eq{
\|\overline{\cP}\tran\hspace{-1mm} \cM\|^2 \hspace{-1mm}=& \mu^2_{\max}\|s\tran\hspace{-1mm} \otimes\hspace{-0.7mm} I_M\|^2\hspace{-1mm}=\hspace{-1mm}\mu^2_{\max}\lambda_{\max}\Big(\hspace{-0.5mm}(s\otimes I_M)(s\tran \otimes I_M)\hspace{-0.5mm}\Big)\nnb
=& \mu^2_{\max}\lambda_{\max}\Big(s s\tran \otimes I_M\Big) = \mu^2_{\max}\lambda_{\max}(s s\tran) \nnb
=& \mu_{\max}^2 \|s\|^2 =\mu_{\max}^2 {\sum_{k=1}^{N}}(p_k\tau_k)^2. 
}
Recall \eqref{usd9} and by introducing $E=\ba{cc}
I_{MN} & 0_{MN}
\ea$,
we have $\cX_{R,u}=E \cX_R$. Therefore, it holds that
\eq{\label{xcn398}
\|\cX_{R,u}\|^2 \le \|E\|^2 \|\cX_R\|^2=\|\cX_R\|^2.
}
Substituting \eqref{xcn398} into \eqref{bguj87}, we have
\eq{\hspace{-2mm}
\frac{1}{c^2}\|\overline{\cP}\tran\hspace{-2mm}\cM\cH_{i-1}\cX_{R,u}\hspace{-0.5mm}\|^2 \hspace{-1mm}\le\hspace{-1mm} \frac{p_{\max}\delta^2}{c^2}\hspace{-0.5mm} \|\cX_{R}\|^2 \mu^2_{\max} \hspace{-1mm}\define\hspace{-1mm} \sigma_{12}^2 \mu^2_{\max}
}
where $\sigma_{12} \define \sqrt{p_{\max}}\delta \|\cX_R\|/c$.} Notice that $\sigma_{12}$ is independent of $\mu_{\max}$. Substituting \eqref{uiox98} and \eqref{bguj87} into \eqref{x_bar_square}, we get
\eq{\label{x_bar_square-2}
	\|\bar{\sx}_i\|^2 \hspace{-1mm}\le\hspace{-1mm} &\ \frac{1}{1-t}(1-\sigma_{11}\mu_{\max})^2\|\bar{\sx}_{i-1}\|^2 + \frac{1}{t} \sigma^2_{12}\mu_{\max}^2 \|\check{\sx}_{i-1}\|^2 \nnb
	\hspace{-1mm}=\hspace{-0.2mm} & (1\hspace{-1mm}-\hspace{-1mm}\sigma_{11}\mu_{\max})\|\hspace{-0.3mm}\bar{\sx}_{i-1}\hspace{-0.3mm}\|^2 \hspace{-1mm}+\hspace{-1mm} ({\sigma^2_{12}}/{\sigma_{11}})\mu_{\max}\|\hspace{-0.3mm}\check{\sx}_{i-1}\hspace{-0.3mm}\|^2
}
where we are selecting $t = \sigma_{11}\mu_{\max}$.

Next we check the second line of recursion \eqref{final-recursion}:
\eq{
	\check{\sx}_i =&\; -c\cX_L\cT_{i-1}\cR_1 \bar{\sx}_{i-1}+ (\cD_1 - \cX_L\cT_{i-1}\cX_R)\check{\sx}_{i-1}\nnb
	=&\; \cD_1\check{\sx}_{i-1} - \cX_L\cT_{i-1}(c\cR_1 \bar{\sx}_{i-1}
	+ \cX_R\check{\sx}_{i-1}). \label{yujs99}
}
Squaring both sides and using Jensen's inequality again, 
\eq{
	\|\check{\sx}_i\|^2 =& \footnotesize\|\cD_1 \check{\sx}_{i-1} - \cX_L\cT_{i-1}(c\cR_1 \bar{\sx}_{i-1}
	+ \cX_R\check{\sx}_{i-1})\|^2\nn\\
	\le & \frac{\|\cD_1\|^2}{t} \|\check{\sx}_{i-1}\|^2 \hspace{-0.8mm}+\hspace{-0.8mm} \frac{1}{1-t}\| \cX_L\cT_{i-1}\hspace{-0.5mm}(\hspace{-0.5mm}c\cR_1 \bar{\sx}_{i-1}
	\hspace{-1mm}+\hspace{-1mm} \cX_R\check{\sx}_{i-1}\hspace{-0.5mm})\hspace{-0.5mm}\|^2	\nn}
\eq{
\le & \frac{\|\cD_1\|^2}{t} \|\check{\sx}_{i-1}\|^{\hspace{-0.3mm}2} \hspace{-1mm}+\hspace{-1mm} \frac{2c^2}{1-t} \| \cX_L\cT_{i-1}\cR_1\|^2 \|\bar{\sx}_{i-1}\|^2\nn\\
	&\hspace{0.63cm}+ \frac{2}{1-t} \|\cX_L\cT_{i-1}\cX_R\|^2 \| \check{\sx}_{i-1}\|^2.
}
where $t\in(0,1)$. From Lemma \ref{lm-B-decomposition} we have that $\lambda \define \|D_1\| = \sqrt{\lambda_2(\tA)}<1$. By setting $t=\lambda$, we reach
\eq{\label{yuwe9}
	\|\check{\sx}_i\|^2 
	\leq&\ \lambda \|\check{\sx}_{i-1}\|^2 + 2c^2 \| \cX_L\cT_{i-1}\cR_1\|^2 \|\bar{\sx}_{i-1}\|^2/(1-\lambda)\nn\\
	&\;\;\;\;\;\hspace{1.03cm}+ {2} \|\cX_L\cT\cX_R\|^2 \| \check{\sx}_{i-1}\|^2/({1-\lambda}).
}
We introduce the matrix $\Gamma={\rm diag}\{\tau_1 I_M,\cdots, \tau_N I_M\}$, and note that we can write $\cM=\mu_{\max}\Gamma$. Substituting it into \eqref{T-defi}, 
\eq{\label{xha9867}
	\cT_{i-1} &= \mu_{\max}
	\ba{cc}
	\tcA\tran \Gamma \cH_{i-1} & 0 \\
	\cV \tcA\tran \Gamma \cH_{i-1} & 0 
	\ea \nnb
	&= \mu_{\max} 
	\underbrace{\ba{cc}
		\tcA\tran & 0 \\
		\cV \tcA\tran & 0
		\ea}_{\define \cT_d}
	\ba{cc}
	\Gamma \cH_{i-1} & 0 \\
	0 & \Gamma \cH_{i-1}
	\ea,
	%
}
which implies that
\eq{\label{nui89}
	\|\cT_{i-1}\|^2 \le \mu_{\max}^2 \|\cT_d\|^2 \left( \max_{1\le k\le N} \|H_{k,i-1}\|^2 \right) \le  \|\cT_d\|^2 \delta^2 \mu_{\max}^2.
}
We also emphasize that $\|\cT_d\|^2$ is independent of $\mu_{\max}$. With inequality \eqref{nui89}, we further have
\eq{
	\hspace{-2mm}c^2\| \cX_L\cT_{i-1} \cR_1\|^2 \hspace{-0.8mm} &\le \hspace{-0.8mm} c^2\mu_{\max}^2 \|\cX_L\|^2 \|\cT_d\|^2 \|\cR_1\|^2\delta^2  \hspace{-0.8mm}\define\hspace{-0.8mm} \sigma^2_{21} \mu_{\max}^2 \label{xcbnb978-1}\\
	\hspace{-5mm}\| \cX_L\cT_{i-1}\cX_R\|^2 \hspace{-0.8mm} &\le\hspace{-0.8mm} \mu_{\max}^2 \hspace{-0.5mm}\|\cX_L\|^2\hspace{-0.5mm} \|\cT_d\|^2\hspace{-0.5mm} \|\hspace{-0.3mm}\cX_R\hspace{-0.5mm}\|^2\hspace{-0.5mm} \delta^2 \hspace{-1mm}\define\hspace{-1mm} \sigma^2_{22} \mu_{\max}^2\label{xcbnb978-2}
}
since $\|\cR_1\|=1$, and where $\sigma_{21}$ and $\sigma_{22}$ are defined as
\eq{
	\sigma_{21}=c\|\cX_L\| \|\cT_d\| \delta,\ \ \sigma_{22}=\|\cX_L\| \|\cT_d\| \|\cX_R\|\delta.
}
With \eqref{xcbnb978-1} and \eqref{xcbnb978-2}, inequality \eqref{yuwe9} becomes 
\eq{\label{yuwe9-2}
	\| \check{\sx}_i\|^2 
	\hspace{-1mm}\leq\hspace{-1mm} \left(\hspace{-1mm}\lambda \hspace{-0.8mm}+\hspace{-0.8mm} \frac{2\sigma_{22}^2 \mu^2_{\max}}{1-\lambda}\hspace{-1mm}\right)\hspace{-1mm} \|\check{\sx}_{i-1}\|^2 \hspace{-1mm}+\hspace{-1mm} \frac{2\sigma_{21}^2 \mu^2_{\max}}{1-\lambda}  \|\bar{\sx}_{i-1}\|^2.
}
Combining \eqref{x_bar_square-2} and \eqref{yuwe9-2}, we arrive at the inequality recursion
\eq{\label{xngyi}
	\hspace{-3mm}
	\ba{c}
	\hspace{-2mm}\|\bar{\sx}_i\|^2\hspace{-2mm} \\
	\hspace{-2mm}\|\check{\sx}_i\|^2 \hspace{-2mm}
	\ea
	\preceq
	\underbrace{\ba{cc}
		\hspace{-2mm}1-\sigma_{11}\mu_{\max}\hspace{-1mm} & \hspace{-1mm} \frac{\sigma_{12}^2}{\sigma_{11}}\mu_{\max}\hspace{-2mm}\\
		\hspace{-2mm}\frac{2\sigma_{21}^2\mu^2_{\max}}{1-\lambda} \hspace{-1mm}&\hspace{-1mm} \lambda + \frac{2\sigma_{22}^2 \mu^2_{\max}}{1-\lambda}\hspace{-2mm}
		\ea}_{\define G}
	\ba{c}
	\hspace{-2mm}\|\bar{\sx}_{i-1}\|^2\hspace{-2mm} \\
	\hspace{-2mm}\|\check{\sx}_{i-1}\|^2 \hspace{-2mm}
	\ea.
}
Now we check the spectral radius of the matrix $G$. Recall the fact that the spectral radius of a matrix is upper bounded by any of its norms. Therefore,
\eq{
	\hspace{-2mm}\rho(G) &\le \|G\|_1 = \max\Big\{1-\sigma_{11}\mu_{\max} + \frac{2\sigma_{21}^2\mu^2_{\max}}{1-\lambda},\nnb
	&\hspace{2.8cm}  \lambda + \frac{\sigma_{12}^2}{\sigma_{11}}\mu_{\max} +  \frac{2\sigma_{22}^2 \mu^2_{\max}}{1-\lambda}\Big\},
} 
where we already know that $\lambda<1$. To guarantee $\rho(G)<1$, it is enough to select the step-size parameter small enough to satisfy
\eq{
	1-\sigma_{11}\mu_{\max} + \frac{2\sigma_{21}^2\mu^2_{\max}}{1-\lambda} &< 1,\label{upper-bound-1} \\
	\lambda + \frac{\sigma_{12}^2}{\sigma_{11}}\mu_{\max} +  \frac{2\sigma_{22}^2 \mu^2_{\max}}{1-\lambda} & <1. \label{upper-bound-2}
}
To get a simpler upper bound, we transform \eqref{upper-bound-2} such that
\eq{
	&\ \lambda + \frac{\sigma_{12}^2}{\sigma_{11}}\mu_{\max} +  \frac{2\sigma_{22}^2 \mu^2_{\max}}{1-\lambda} \nnb
	=&\ \lambda \hspace{-1mm}+\hspace{-1mm} \frac{2\sigma_{12}^2}{\sigma_{11}}\mu_{\max}  \hspace{-1mm}-\hspace{-1mm} \left( \frac{\sigma_{12}^2}{\sigma_{11}}\mu_{\max} \hspace{-1mm}-\hspace{-1mm} \frac{2\sigma_{22}^2 \mu^2_{\max}}{1-\lambda} \right) \hspace{-1mm}\le \lambda \hspace{-1mm}+\hspace{-1mm} \frac{2\sigma_{12}^2}{\sigma_{11}}\mu_{\max}, \label{xng8}
}
where the last inequality holds when 
\eq{\label{zxchsdj8}
	\mu_{\max} \le \frac{\sigma_{12}^2(1-\lambda)}{2\sigma_{11}\sigma_{22}^2}.
} 
If, in addition, we let \eqref{xng8} be less than $1$, which is equivalent to selecting
\eq{\label{zxgh8}
	\mu_{\max}\le \frac{\sigma_{11}(1-\lambda)}{2\sigma_{12}^2},
}
then we guarantee equality \eqref{upper-bound-2}. Combing \eqref{upper-bound-1}, \eqref{zxchsdj8} and \eqref{zxgh8}, we have
\eq{\label{upper-bd}
\hspace{-3mm}\mu_{\max}\le \min\left\{ \frac{\sigma_{11}(1-\lambda)}{2\sigma_{21}^2},\frac{\sigma_{12}^2(1-\lambda)}{2\sigma_{11}\sigma_{22}^2},\frac{\sigma_{11}(1-\lambda)}{2\sigma_{12}^2} \right\}
}
This together with \eqref{xchayu}, i.e.
\eq{\label{upper-bd-2}
\mu_{\max}< {1}/{\delta}
}
will guarantee $\|G\|_1$ to be less than $1$. In fact, the upper bound in \eqref{upper-bd} can be further simplified.
%
%
From the definitions of $\sigma_{11}$, $\sigma_{12}$, $\sigma_{21}$ and $\sigma_{22}$, we have
\eq{
	\frac{\sigma_{11}(1-\lambda)}{2\sigma_{21}^2} = &\
	\frac{p_{k_o}\tau_{k_o}\nu (1-\lambda)}{2 c^2 \|\cX_L\|^2\|\cT_d\|^2\delta^2},\label{term-2}\\
	\frac{\sigma_{12}^2(1-\lambda)}{2\sigma_{11}\sigma_{22}^2} = &\ \frac{p_{\max}(1-\lambda)}{2p_{k_o}\tau_{k_o}\nu \|\cX_L\|^2\|\cT_d\|^2 c^2}, \label{term-4}\\
	\frac{\sigma_{11}(1-\lambda)}{2\sigma_{12}^2} = &\ 
	\frac{p_{k_o}\tau_{k_o}\nu (1-\lambda)c^2}{2p_{\max}\|\cX_R\|^2\delta^2}.\label{term-3}	
}
First, notice that 
\eq{
\frac{\sigma_{11}(1-\lambda)}{2\sigma_{21}^2} \Big/\frac{\sigma_{12}^2(1-\lambda)}{2\sigma_{11}\sigma_{22}^2}= \frac{(p_{k_o}\tau_{k_o}\nu)^2}{p_{\max}\delta^2} < 1
}
because $p_{k_o}<p_{\max}$, $\tau_{k_o}<1$ and $\nu<\delta$. Therefore, the inequality in \eqref{upper-bd} is equivalent to
\eq{
	\mu_{\max} &\le  \min\left\{ \frac{p_{k_o}\tau_{k_o}\nu (1-\lambda)}{2 c^2 \|\cX_L\|^2\|\cT_d\|^2\delta^2}, \frac{p_{k_o}\tau_{k_o}\nu (1-\lambda)c^2}{2p_{\max}\|\cX_R\|^2\delta^2} \right\} \nnb
	&= \frac{p_{k_o}\tau_{k_o}\nu (1-\lambda)}{2\delta^2} \min\left\{ \frac{1}{\|\cX_L\|^2 \|\cT_d\|^2 c^2},\frac{c^2}{p_{\max}\|\cX_R\|^2}\right\}. \label{cxn2778}
}
It is observed that the constant value $c$ affects the upper bound in \eqref{cxn2778}. If $c$ is sufficiently large, then the first term in \eqref{cxn2778} dominates and $\mu_{\max}$ has a narrow feasible set. On the other hand, if $c$ is sufficiently small, then the second term dominates and $\mu_{\max}$ will also have a narrow feasible set. To make the feasible set of $\mu_{\max}$ as large as possible, we should optimize $c$ to maximize 
\eq{
\min\Big\{ \frac{1}{\|\cX_L\|^2 \|\cT_d\|^2 c^2},\ \ \frac{c^2}{p_{\max}\|\cX_R\|^2}\Big\}.
}
Notice that the first term $1/(\|\cX_L\|^2 \|\cT_d\|^2 c^2)$ is monotone decreasing with $c^2$, while the second term $c^2/\|\cX_R\|^2$ is monotone increasing with $c^2$. Therefore, when 
\eq{
	\frac{1}{\|\cX_L\|^2 \|\cT_d\|^2 c^2} = \frac{c^2}{p_{\max}\|\cX_R\|^2} \Longleftrightarrow c^2 = \frac{\sqrt{p_{\max}}\|\cX_R\|}{\|\cX_L\| \|\cT_d\|},\label{c2}
}
we get the maximum upper bound for $\mu_{\max}$, i.e.
\eq{
	\mu_{\max}\le \frac{p_{k_o}\tau_{k_o}\nu (1-\lambda)}{2\sqrt{p_{\max}}\|\cX_L\| \|\cT_d\| \|\cX_R\|\delta^2}.
}
Next we compare the above upper bound with $1/\delta$. Recall that for any matrix $A$, its spectral radius is smaller than its $2-$induced norm so that
\eq{\label{238ss}
	\|\cT_d\| \ge \rho(\cT_d)\overset{\eqref{xha9867}}{=} \rho(\tcA)=1.
}
Moreover, recall from Lemma \ref{lm-B-decomposition} that $X_L X_R=I_{2(N-1)}$, so that
$\cX_L \cX_R=X_L X_R \otimes I_{M} = I_{2M(N-1)}$, which implies that 
\eq{\label{2378n}
	\|\cX_L\|\|\cX_R\| \ge \|\cX_L \cX_R\|= 1.
}
Using relations \eqref{238ss} and \eqref{2378n}, and recalling that $p_{k_o}\le p_{\max} < \sqrt{p_{\max}}$, $\tau_{k_o}<1$, $1-\lambda<1$ and $\nu<\delta$, we have
\eq{
	\frac{p_{k_o}\tau_{k_o}\nu (1-\lambda)}{2\sqrt{p_{\max}}\|\cX_L\| \|\cT_d\| \|\cX_R\|\delta^2}\le \frac{\nu }{\delta^2} < \frac{\delta}{\delta^2}=\frac{1}{\delta}.
}
Therefore, the upper bounds in \eqref{upper-bd}, \eqref{upper-bd-2} are determined by
\eq{\label{final-upper-bd}
	\mu_{\max}\le \frac{p_{k_o}\tau_{k_o}\nu (1-\lambda)}{2\sqrt{p_{\max}}\|\cX_L\| \|\cT_d\| \|\cX_R\|\delta^2}.
}
In other words, when $\mu_{\max}$ satisfies \eqref{final-upper-bd}, $\|G\|_1$ will be guaranteed to be less than $1$, i.e.,
\eq{\label{xsdn8}
	\hspace{-2mm} \|G\|_1&= \max\Big\{1-\sigma_{11}\mu_{\max} + \frac{2\sigma_{21}^2\mu^2_{\max}}{1-\lambda},\nnb
	&\hspace{2.8cm}  \lambda + \frac{\sigma_{12}^2}{\sigma_{11}}\mu_{\max} +  \frac{2\sigma_{22}^2 \mu^2_{\max}}{1-\lambda}\Big\} \nnb
	&= \max\Big\{ 1-p_{k_o}\tau_{k_o}\nu \mu_{\max} + \frac{2c^2\|\cX_L\|^2\|\cT_d\|^2\delta^2 \mu^2_{\max}}{1-\lambda}\nnb &\hspace{3mm}\lambda\hspace{-1mm}+\hspace{-1mm}\frac{p_{\max}\|\cX_R\|^2\delta^2}{c^2p_{k_o}\tau_{k_o}\nu}\mu_{\max} \hspace{-1mm}+\hspace{-1mm} \frac{2\|\cX_L\|^2 \|\cT_d\|^2 \|\cX_R\|^2\delta^2\mu_{\max}^2}{1-\lambda} \Big\} \nnb
	&\overset{\eqref{c2}}{=} \max\Big\{ 1-p_{k_o}\tau_{k_o}\nu \mu_{\max} + \frac{2\sqrt{p_{\max}} \alpha_d \delta^2 \mu^2_{\max}}{1-\lambda},\nnb
	&\hspace{1cm} \lambda \hspace{-1mm}+\hspace{-1mm} \frac{\sqrt{p_{\max}} \alpha_d \delta^2\mu_{\max}}{p_{k_o}\tau_{k_o}\nu} \hspace{-1mm}+\hspace{-1mm} \frac{2\alpha_d^2\delta^2\mu_{\max}^2}{1-\lambda} \Big\} < 1,
} 
where $\alpha_d \define \|\cX_L\|\|\cT_d\|\|\cX_R\|$. Let
\eq{
z_i\define \ba{c}
\|\bar{\sx}_i\|^2\\
\|\check{\sx}_i\|^2
\ea \succeq 0,
}
and note from \eqref{xngyi} that
\eq{
z_i \preceq G z_{i-1}.
}
Computing the $1$-norm of both sides gives
\eq{
\|\bar{\sx}_i\|^2 \hspace{-1mm}+\hspace{-1mm} \|\check{\sx}_i\|^2 &\hspace{-1mm}=\hspace{-1mm} \|z_i\|_1 \le \|G\|_1 \|z_{i-1}\|_1 = \rho (\|\bar{\sx}_{i-1}\|^2 \hspace{-1mm}+\hspace{-1mm} \|\check{\sx}_{i-1}\|^2),\nnb
&\hspace{-1mm}\le\hspace{-1mm} \rho^i(\|\bar{\sx}_{0}\|^2 \hspace{-1mm}+\hspace{-1mm} \|\check{\sx}_{0}\|^2), \label{xw3n}
}
where we define $\rho \define \|G\|_1$. Inequality \eqref{xw3n} is equivalent to
\eq{\label{xngyi-2}
	\hspace{-3mm}
	\left\|
	\ba{c}
	\hspace{-2mm}\bar{\sx}_i\hspace{-2mm} \\
	\hspace{-2mm}\check{\sx}_i \hspace{-2mm}
	\ea
	\right\|^2
	\le \rho^{i}
	\left\|
	\ba{c}
	\hspace{-2mm}\bar{\sx}_{0}\hspace{-2mm} \\
	\hspace{-2mm}\check{\sx}_{0} \hspace{-2mm}
	\ea
	\right\|^2,
}
By re-incorporating $\widehat{\sx}_i=0$, relation \eqref{xngyi-2} also implies that
\eq{\label{xngyi-3}
	\hspace{-3mm}
	\left\|
	\ba{c}
	\hspace{-2mm}\bar{\sx}_i\hspace{-2mm} \\
	\hspace{-2mm}\widehat{\sx}_i\hspace{-2mm} \\
	\hspace{-2mm}\check{\sx}_i \hspace{-2mm}
	\ea
	\right\|^2
	\le \rho^{i}
	\left\|
	\ba{c}
	\hspace{-2mm}\bar{\sx}_{0}\hspace{-2mm} \\
	\hspace{-2mm}\widehat{\sx}_0\hspace{-2mm} \\
	\hspace{-2mm}\check{\sx}_{0} \hspace{-2mm}
	\ea
	\right\|^2 \define C_0 \rho^i.
}
From \eqref{x-bar and x-check} we conclude that 
\eq{\label{237sdnkk}
	\left\|
	\ba{c}
	\hspace{-2mm}\twd_i\hspace{-2mm}\\
	\hspace{-2mm}\tyd_i\hspace{-2mm}
	\ea 
	\right\|^2
	\le \left\|\cX \right\|^2
	\left\|
	\ba{c}
	\hspace{-2mm}\bar{\sx}_i\hspace{-2mm}\\
	\hspace{-2mm}\widehat{\sx}_i\hspace{-2mm} \\
	\hspace{-2mm}\check{\sx}_i\hspace{-2mm}
	\ea
	\right\|^2 \le C \rho^i,
}
where the constant $C = \|\cX\|^2C_0$.
%

{\color{black}
\section{Proof of Theorem \ref{them-algorithm-prime}}
\label{app-algorithm-prime}
We define 
\eq{
\cM_i^\prime \define \mu_o \diag\{  q_1 I_M/z_{1,i}(1), \cdots, q_N I_M/z_{N, i}(N) \}.
}
Substituting recursions (98) and (99) from Part I \cite{yuan2017exact1} into expre-ssion (100) we obtain (compare with (93) from Part I \cite{yuan2017exact1}): 
\eq{
\sw_i \hspace{-0.8mm}=\hspace{-0.8mm} \tcA\tran \hspace{-1mm} \left[ 2\sw_{i\hspace{-0.3mm}-\hspace{-0.3mm}1} \hspace{-0.5mm}-\hspace{-0.5mm} \sw_{i\hspace{-0.3mm}-\hspace{-0.3mm}2} \hspace{-0.5mm}-\hspace{-0.5mm}\left( \hspace{-0.5mm}\cM_i^\prime \grad \cJ^o(\sw_{i\hspace{-0.3mm}-\hspace{-0.3mm}1}) \hspace{-0.5mm}-\hspace{-0.5mm} \cM_{i\hspace{-0.3mm}-\hspace{-0.3mm}1}^\prime \grad \cJ^o(\sw_{i\hspace{-0.3mm}-\hspace{-0.3mm}2}) \right) \right],
}
which can be rewritten into a primal-dual form (compare with (89) from Part I \cite{yuan2017exact1}):
\begin{equation}
\left\{
\begin{aligned}
\sw_i &= \tcA\tran \Big(\sw_{i\hspace{-0.3mm}-\hspace{-0.3mm}1}\hspace{-0.8mm}-\hspace{-0.8mm}\cM_i^\prime \grad \cJ^o(\sw_{i\hspace{-0.3mm}-\hspace{-0.3mm}1})\Big)\hspace{-0.8mm}-\hspace{-0.8mm}\cP^{-1}\cV \sy_{i-1}, \label{zn-1'}\\
\sy_i &= \sy_{i-1} + \cV \sw_i.
\end{aligned}
\right.
\end{equation}
For the initialization, we set $y_{-1}=0$ and $\sw_{-1}$ to be any value, and hence for $i=0$ we have
\begin{equation}
\left\{
\begin{aligned}
\sw_0 &= \tcA\tran \Big(\sw_{\hspace{-0.3mm}-\hspace{-0.3mm}1}\hspace{-0.8mm}-\hspace{-0.8mm}\cM_0^\prime \grad \cJ^o(\sw_{\hspace{-0.3mm}-\hspace{-0.3mm}1})\Big), \label{zn-0'}\\
\sy_0 &= \cV \sw_0. 
\end{aligned}
\right.
\end{equation}
Recursions \eqref{zn-1'} and \eqref{zn-0'} are very close to the standard exact diffusion recursions \eqref{zn-1} and \eqref{zn-0}, except that the step-size matrix $\cM_i^\prime$ is now changing with iteration $i$. Following the arguments \eqref{error} -- \eqref{ed-1-subtracrt}, we have
\begin{equation}
\left\{
\begin{aligned}
\tcA\tran \twd_i &\hspace{-0.5mm}=\hspace{-0.5mm} \tcA\tran \Big(\twd_{i-1}\hspace{-0.8mm}+\hspace{-0.8mm}\cM_i^\prime \grad \cJ^o(\sw_{i\hspace{-0.3mm}-\hspace{-0.3mm}1})\Big) \hspace{-0.8mm}+\hspace{-0.8mm} \cP^{-1} \cV  \sy_{i}, \label{ed-1-subtracrt-prime} \\
\tyd_i &= \tyd  _{i-1} - \cV \sw_i. 
\end{aligned}
\right.
\end{equation}
Subtracting optimality conditions \eqref{KKT-1-1}--\eqref{KKT-2-2} from \eqref{ed-1-subtracrt-prime} leads to
\begin{equation}
\hspace{-0.8mm}\left\{
\begin{aligned}
\hspace{-1.5mm}\tcA\tran \twd_i &\hspace{-0.5mm}=\hspace{-0.5mm} \tcA\tran \Big(\hspace{-0.8mm}\twd_{i-1}\hspace{-0.8mm}+\hspace{-0.8mm}\cM \big[\grad \cJ^o(\sw_{i\hspace{-0.3mm}-\hspace{-0.3mm}1})\hspace{-0.8mm}-\hspace{-0.8mm}\grad \cJ^o(\sw^\star)\big]\hspace{-0.8mm}\Big) \hspace{-0.8mm}-\hspace{-0.8mm} \cP^{-1} \cV  \tyd_{i} \\
& \hspace{5mm} + \tcA\tran(\cM_i^\prime - \cM) \grad \cJ^o(\sw_{i-1}), \label{ed-1-subtracrt-1-prime}  \\
\hspace{-1.5mm}\tyd_i &= \tyd_{i-1} +\cV \twd_i. 
\end{aligned}
\right.
\end{equation}
Comparing recursions \eqref{ed-1-subtracrt-1-prime} and \eqref{ed-1-subtracrt-1}, it is observed that recursion \eqref{ed-1-subtracrt-1-prime} has an extra ``mismatch" term, 
$
\tcA\tran(\cM_i^\prime - \cM) \grad \cJ^o(\sw_{i-1}).
$
This mismatch arises because we do not know the perron vector $p$ in advance. We need to run the power iteration (see recursion (97) from Part I\cite{yuan2017exact1}) to learn it. Intuitively, since $\cM_i^\prime \rightarrow \cM$ as $i\to \infty$, we can expect the mismatch term to vanish gradually. Let 
\eq{\label{ei}
e_i \define (\cM_i^\prime - \cM) \grad \cJ^o(\sw_{i-1}).
}
By following arguments \eqref{237sdb}--\eqref{xcnh09}, recursion \eqref{ed-1-subtracrt-1-prime} is equivalent to
\eq{\label{xcnh09-prime}
&\ \ba{cc}
\tcA\tran & \cP^{-1}\cV \\
-\cV & I_{MN}
\ea
\ba{c}
\twd_i\\
\tyd_i
\ea \nnb
=&\ 
\ba{cc}
\hspace{-1mm}\tcA\tran (I_{MN}-\cM\cH_{i-1}) & 0 \hspace{-1mm}\\
\hspace{-1mm}0 & I_{MN} \hspace{-1mm}
\ea
\hspace{-1mm}
\ba{c}
\hspace{-1.5mm}\twd_{i-1} \hspace{-2mm}\\
\hspace{-1.5mm}\tyd_{i-1}\hspace{-2mm}
\ea + 
\ba{c}
\hspace{-1.5mm}\tcA\tran\hspace{-1.5mm}\\
\hspace{-1.5mm}0\hspace{-1.5mm}
\ea e_i .
}
By following \eqref{nxcm987}--\eqref{T-defi}, recursion \eqref{xcnh09-prime} can be rewritten as
\eq{
\boxed{	
\ba{c}
\hspace{-1mm}\twd_i\hspace{-1mm}\\
\hspace{-1mm}\tyd_i\hspace{-1mm}
\ea = (\cB - \cT_{i-1})\ba{c}
\hspace{-1mm}\twd_{i-1}\hspace{-1mm}\\
\hspace{-1mm}\tyd_{i-1}\hspace{-1mm}
\ea
+ \cB_{\ell} e_i
} \label{error-recursion-prime}
}
where $\cB$ and $\cT_{i}$ are defined in \eqref{T-defi}, and
\eq{
\cB_{\ell} = 
\ba{c}
\tcA\tran\\
\cV\tcA\tran
\ea
}
Relation \eqref{error-recursion-prime} is the error dynamics for the exact diffusion algorithm $1^\prime$. Comparing \eqref{error-recursion-prime} with \eqref{error-recursion}, we find that algorithm $1^\prime$ is essentially the standard exact diffusion with error perturbation. Using Lemma \eqref{lm-B-decomposition} and by following arguments from \eqref{tsdhb} to \eqref{final-recursion}, we can transform the error dynamics \eqref{error-recursion-prime} into 
\eq{\label{final-recursion-prime}
\hspace{-3mm}
\ba{c}
\hspace{-2mm}\bar{\sx}_i\hspace{-2mm}\\
\hspace{-2mm}\check{\sx}_i\hspace{-2mm}
\ea \hspace{-0.5mm}= &\hspace{-0.5mm}
\ba{cc}
\hspace{-2mm}I_M \hspace{-1mm}-\hspace{-1mm}{\overline{\cP}\tran\cM\cH_{i\hspace{-0.4mm}-\hspace{-0.4mm}1}\cI} & -\frac{1}{c}\overline{\cP}\tran\cM\cH_{i\hspace{-0.4mm}-\hspace{-0.4mm}1}\cX_{R,u}\hspace{-2mm}\\
\hspace{-2mm}- c\cX_L \cT_{i-1} \cR_1 & \cD_1 - \cX_L \cT_{i-1} \cX_R \hspace{-2mm}
\ea \hspace{-1.5mm}
\ba{c}
\hspace{-2mm}\bar{\sx}_{i\hspace{-0.4mm}-\hspace{-0.4mm}1}\hspace{-2mm}\\
\hspace{-2mm}\check{\sx}_{i\hspace{-0.4mm}-\hspace{-0.4mm}1}\hspace{-2mm}
\ea \nnb
& + 
\ba{c}
\hspace{-2mm}\overline{\cP}\tran\hspace{-2mm}\\
\hspace{-2mm}c\cX_L \cB_{\ell}\hspace{-2mm}
\ea
\hspace{-1mm}e_i.
}
Next we analyze the convergence of the above recursion. From the first line we have
\eq{
\hspace{-1mm} \|\bar{\sx}_i\|^2 \hspace{-1mm}=\hspace{-1mm}&\ \left\| \left(I_M \hspace{-1mm} - \hspace{-1mm} {\overline{\cP}\tran \hspace{-0.8mm}\cM\cH_{i\hspace{-0.3mm}-\hspace{-0.3mm}1}\cI}\right) \bar{\sx}_{i-1}  \right. \nnb
& \left. \hspace{1cm} - \frac{1}{c}
\overline{\cP}\tran\cM\cH_{i-1}\cX_{R,u} \check{\sx}_{i-1}\hspace{-0.5mm} + \overline{\cP}\tran e_i \right\|^2 }
\eq{\label{x_bar_square-prime}
\le &\ \frac{1}{1-t}\left\|I_M \hspace{-1mm} - \hspace{-1mm} {\overline{\cP}\tran \hspace{-0.8mm}\cM\cH_{i\hspace{-0.3mm}-\hspace{-0.3mm}1}\cI}\right\|^2\|\bar{\sx}_{i-1}\|^2 \nnb
&\quad + \frac{2}{t}\frac{1}{c^2}\|\overline{\cP}\tran\cM\cH_{i-1}\cX_{R,u}\|^2 \|\check{\sx}_{i-1}\|^2 + \frac{2}{t}\|\overline{\cP}\tran\|^2\|e_i\|^2 \nnb
\le & (1\hspace{-1mm}-\hspace{-1mm}\sigma_{11}\mu_{\max})\|\hspace{-0.3mm}\bar{\sx}_{i-1}\hspace{-0.3mm}\|^2 \hspace{-1mm}+\hspace{-1mm} \frac{\sigma_{12}^2\mu_{\max}}{\sigma_{11}}\|\hspace{-0.3mm}\check{\sx}_{i-1}\hspace{-0.3mm}\|^2 + \frac{2\|e_i\|^2}{\sigma_{11}\mu_{\max}},
}
where the last inequality follows the arguments in \eqref{x_bar-recursion}--\eqref{x_bar_square-2}. From the second line of recursion \eqref{final-recursion-prime}, we have
\eq{\hspace{-2mm} \label{yuwe9-2-prime}
&\|\check{\sx}_i\|^2 \nnb
=& \footnotesize\|\cD_1 \check{\sx}_{i-1} \hspace{-1mm}-\hspace{-1mm} \cX_L\cT_{i-1}(c\cR_1 \bar{\sx}_{i-1}
+ \cX_R\check{\sx}_{i-1}) + c \cX_L \cB_{\ell}e_i\|^2\nn\\
\le & \frac{\|\cD_1\|^2}{t} \|\check{\sx}_{i-1}\|^{\hspace{-0.3mm}2} \hspace{-1mm}+\hspace{-1mm} \frac{2c^2}{1-t} \| \cX_L\cT_{i-1}\cR_1\|^2 \|\bar{\sx}_{i-1}\|^2\nn\\
&+ \frac{2}{1-t} \|\cX_L\cT_{i-1}\cX_R\|^2 \| \check{\sx}_{i-1}\|^2 + \frac{2c^2}{1-t}\|\cX_L \cB_{\ell}\|^2 \|e_i\|^2 \nnb
\le & \left(\hspace{-1mm}\lambda \hspace{-0.8mm}+\hspace{-0.8mm} \frac{2\sigma_{22}^2 \mu^2_{\max}}{1-\lambda}\hspace{-1mm}\right)\hspace{-1mm} \|\check{\sx}_{i-1}\|^2 \hspace{-1mm}+\hspace{-1mm} \frac{2\sigma_{21}^2 \mu^2_{\max}}{1-\lambda}  \|\bar{\sx}_{i-1}\|^2 \hspace{-1mm}+\hspace{-1mm} \frac{2c^2 d \|e_i\|^2}{1-\lambda},
}
where $d\define \|\cX_L \cB_{\ell}\|^2$ is independent of iteration $i$. Moreover, the last inequality holds because of arguments in \eqref{yujs99}--\eqref{yuwe9-2}. Combining \eqref{x_bar_square-prime} and \eqref{yuwe9-2-prime}, we arrive at the inequality recursion (compare with \eqref{xngyi}):
\eq{\label{xngyi-prime}
\hspace{-3mm}
\ba{c}
\hspace{-2mm}\|\bar{\sx}_i\|^2\hspace{-2mm} \\
\hspace{-2mm}\|\check{\sx}_i\|^2 \hspace{-2mm}
\ea
&\preceq
\underbrace{\ba{cc}
\hspace{-2mm}1-\sigma_{11}\mu_{\max}\hspace{-1mm} & \hspace{-1mm} \frac{\sigma_{12}^2}{\sigma_{11}}\mu_{\max}\hspace{-2mm}\\
\hspace{-2mm}\frac{2\sigma_{21}^2\mu^2_{\max}}{1-\lambda} \hspace{-1mm}&\hspace{-1mm} \lambda + \frac{2\sigma_{22}^2 \mu^2_{\max}}{1-\lambda}\hspace{-2mm}
\ea}_{\define G}
\ba{c}
\hspace{-2mm}\|\bar{\sx}_{i-1}\|^2\hspace{-2mm} \\
\hspace{-2mm}\|\check{\sx}_{i-1}\|^2 \hspace{-2mm}
\ea \nnb
&+ 
\ba{c}
\frac{2}{\sigma_{11} \mu_{\max}} \\
\frac{2c^2d}{1-\lambda}
\ea
\|e_i\|^2.
}
Next let us bound the mismatch term $\|e_i\|^2$. From \eqref{ei} we have
\eq{
e_i &= (\cM_i^\prime - \cM) \left( \grad \cJ^o(\sw_{i-1}) - \grad \cJ^o(\sw^\star) \right) \nnb
& \hspace{1cm} + (\cM_i^\prime - \cM)\grad \cJ^o(\sw^\star) \nnb
&\overset{\eqref{xcnh}}{=} -(\cM_i^\prime - \cM) \cH_{i-1} \twd_{i-1} + (\cM_i^\prime - \cM)\grad \cJ^o(\sw^\star).
}
which implies that
\eq{\label{ei-upper-bound}
\|e_i\|^2 & \le 2\delta^2 \|\cM_i^\prime - \cM\|^2 \|\twd_{i-1}\|^2 + 2\|\cM_i^\prime - \cM\|^2 g,
}
where $g\define \|\cJ^o(\sw^\star)\|^2$ is a constant independent of iteration. Recall that $\cM = M \otimes I_M$ and $\cM^\prime_i = M^\prime_i \otimes I_M$ where
\eq{
M &= \diag\{\mu_1, \mu_2, \cdots, \mu_N\}, \nnb
M^\prime_i &= \diag\left\{\frac{q_1\mu_o}{z_{1,i}(1)}, \cdots, \frac{q_N\mu_o}{z_{N,i}(N)}\right\}.
}
Using the relation $\mu_k = q_k \mu_o/p_k$ (see equation (13) from Part I\cite{yuan2017exact1}), we have
\eq{\label{zxncnc}
& M - M^\prime_i \nnb
= &\diag\left\{\frac{q_1\mu_o}{p_1}\left(1 - \frac{p_1}{z_{1,i}(1)}\right),\cdots, \frac{q_N\mu_o}{p_N}\left(1 - \frac{p_N}{z_{N,i}(N)}\right)\right\}\nnb
= &\diag\left\{\mu_1\left(1 - \frac{p_1}{z_{1,i}(1)}\right),\cdots, \mu_N \left(1 - \frac{p_N}{z_{N,i}(N)}\right)\right\} \nnb
= &\mu_{\max}\diag\left\{\tau_1\left(1 - \frac{p_1}{z_{1,i}(1)}\right),\cdots, \tau_N \left(1 - \frac{p_N}{z_{N,i}(N)}\right)\right\},
}
where $\tau_k = \mu_k/\mu_{\max}\le1$. 

Now we examine the convergence of $1-p_k/z_{k,i}(k)$. From the discussion in Policy 5 form Part I\cite{yuan2017exact1}, it is known that $\sz_i$ generated from the power iteration (see equation (37) from Part I) will converge to $[(\mathds{1}_N \otimes I_N)(p\tran \otimes I_N)]\sz_{-1}$. Therefore, 
\eq{
&\hspace{-1cm} \sz_i - [(\mathds{1}_N \otimes I_N)(p\tran \otimes I_N)]\sz_{-1} \nnb
= & \left[\left(\cA\tran\right)^{i+1} - (\mathds{1}_N \otimes I_N)(p\tran \otimes I_N)\right] \sz_{-1} \nnb
= & \left\{\left[ \left(A\tran\right)^{i+1} - \mathds{1}_N p\tran \right] \otimes I_N \right\} z_{-1} \nnb
= & \left\{ \left[ A\tran - \mathds{1}_N p\tran \right]^{i+1} \otimes I_N \right\} z_{-1}.
}
Recall from the discussion in Policy 5 from Part I\cite{yuan2017exact1} that
\eq{
[(\mathds{1}_N \otimes I_N)(p\tran \otimes I_N)]\sz_{-1} = \col\{p,\cdots,p\} \in \RR^{N^2}.
}
As a result,
\eq{\label{sdj}
|z_{k,i}(k) - p_k|^2 &\le \|\sz_i - [(\mathds{1}_N \otimes I_N)(p\tran \otimes I_N)]\sz_{-1} \|^2 \nnb
& \le \|A\tran - \mathds{1}_N p\tran\|^{2(i+1)} \|\sz_{-1}\|^2 \nnb
& = h\cdot \rho_{A}^{2(i+1)}, \quad \forall k = 1, \cdots, N.
}
where $h\define \|\sz_{-1}\|^2$ is a constant, and $\rho_A$ is the second largest eigenvalue magnitude of matrix $A$, i.e., $\rho_A = \max\{|\lambda_2(A)|, |\lambda_N(A)|\}$. Since $A$ is locally balanced, we know $A$ is diagonalizable with real eigenvalue in $(-1, 1]$, and it has a  single eigenvalue at $1$ (see Table I from Part I \cite{yuan2017exact1}), we conclude that $\rho_A < 1$. Also, recall from the discussion at the end of Policy 5 in Part I \cite{yuan2017exact1} that $z_{k,i}(k)>0$ is guaranteed when $\bar{a}_{kk}>0$. Let 
\eq{\label{23nsdb}
\alpha_k \define \min_i\{z_{k,i}(k) \} > 0,\quad \forall\ k = 1, \cdots, N
}
Combining \eqref{sdj} and \eqref{23nsdb}, it holds that for $k = 1, \cdots, N$,
\eq{\label{2bdn}
\left( 1 - \frac{p_k}{z_{k,i}(k)} \right)^2 \le \frac{h}{\alpha_k}\rho_A^{2(i+1)}= h_k \rho_A^{2(i+1)},
}
where we define $h_k \define h/\alpha_k$. Substituting \eqref{2bdn} into \eqref{zxncnc}, it holds that
\eq{\label{xnsd8}
	\|\cM_i^\prime - \cM\|^2 = \|M_i^\prime - M\|^2 \le \mu_{\max}^2 h^\prime  \rho_{A}^{2(i+1)},
	}
	where $h^\prime \define \max_k\{\tau^2_k h_k\}$ is a constant independent of iterations. Substituting \eqref{xnsd8} into \eqref{ei-upper-bound}, we have
\eq{\label{ei-upper-bound-2}
	\|e_i\|^2 & \le 2 \delta^2 \|\cM_i^\prime - \cM\|^2 \|\twd_{i-1}\|^2 + 2\|\cM_i^\prime - \cM\|^2 g \nnb
	& \le 2 \delta^2 \mu_{\max}^2 h^\prime  \rho_{A}^{2(i+1)} \|\twd_{i-1}\|^2 + 2 \mu_{\max}^2 h^\prime g \rho_{A}^{2(i+1)} \nnb
	&\le 2 \delta^2 \mu_{\max}^2 h^\prime  \rho_{A}^{2(i+1)} \hspace{-1mm} \left(  \|\twd_{i-1}\|^2  +  \|\tyd_{i-1}\|^2 \right) \nnb
	&\quad \quad  + 2 \mu_{\max}^2 h^\prime g \rho_{A}^{2(i+1)}.
	}
	Recall from \eqref{x-bar and x-check} that 
	\eq{\label{x-bar and x-check-2}
\ba{c}
\hspace{-1mm}\twd_i\hspace{-1mm}\\
\hspace{-1mm}\tyd_i\hspace{-1mm}
\ea
=
\cX'
\ba{c}
\hspace{-1mm}\bar{\sx}_i\hspace{-1mm}\\
\hspace{-1mm}\widehat{\sx}_i\hspace{-1mm}\\
\hspace{-1mm}\check{\sx}_i \hspace{-1mm}
\ea.
}
We therefore have
\eq{\label{23bnd}
	 \|\twd_{i}\|^2 +  \|\tyd_{i}\|^2 \le &\ \|\cX'\|^2 \left( \|\bar{\sx}_i\|^2 + \|\widehat{\sx}_i\|^2 + \|\check{\sx}_i\|^2  \right) \nnb
	= &\  \|\cX'\|^2 \left( \|\bar{\sx}_i\|^2  + \|\check{\sx}_i\|^2  \right),
	}
	where the last equality holds because $\widehat{\sx}_i = 0$ for $i=0,1,\cdots$ (see \eqref{xzcn}). Substituting \eqref{23bnd} into \eqref{ei-upper-bound-2}, we have
	\eq{\label{2ndnd}
	\|e_i\|^2 &\le 2 \delta^2 \mu_{\max}^2 h^\prime \|\cX'\|^2 \rho_{A}^{2(i+1)}\hspace{-1mm} \left( \|\bar{\sx}_{i-1}\|^2  \hspace{-1mm}+\hspace{-1mm} \|\check{\sx}_{i-1}\|^2 \right) \nnb
	&\quad + 2 \mu_{\max}^2 h^\prime g \rho_{A}^{2(i+1)}
	}																							Substituting \eqref{2ndnd} into \eqref{xngyi-prime}, we have
\eq{\label{xngyi-prime-2}
	\hspace{-3mm}
	\ba{c}
	\hspace{-2mm}\|\bar{\sx}_i\|^2\hspace{-2mm} \\
	\hspace{-2mm}\|\check{\sx}_i\|^2 \hspace{-2mm}
	\ea
	& \hspace{-1mm}\preceq \hspace{-1mm}
	{\ba{cc}
		\hspace{-2mm}1 \hspace{-0.6mm}- \hspace{-0.6mm} \sigma_{11}\mu_{\max} \hspace{-0.6mm}+\hspace{-0.6mm} b^\prime \mu_{\max}\rho_{A}^{2(i+1)}\hspace{-1mm} & \hspace{-1mm} \frac{\sigma_{12}^2}{\sigma_{11}}\mu_{\max} \hspace{-0.6mm}+\hspace{-0.6mm} b^\prime \mu_{\max}\rho_{A}^{2(i+1)}\hspace{-2mm}\\
		\hspace{-2mm}\frac{2\sigma_{21}^2\mu^2_{\max}}{1-\lambda}+c^\prime \mu^2_{\max}\rho_{A}^{2(i+1)} \hspace{-1mm}&\hspace{-1mm} \lambda \hspace{-0.6mm}+\hspace{-0.6mm} \frac{2\sigma_{22}^2 \mu^2_{\max}}{1-\lambda} \hspace{-0.6mm}+\hspace{-0.6mm} c^\prime \mu^2_{\max}\rho_{A}^{2(i+1)}\hspace{-2mm}
		\ea} \nnb
		& \quad \cdot
		\ba{c}
		\hspace{-2mm}\|\bar{\sx}_{i-1}\|^2\hspace{-2mm} \\
		\hspace{-2mm}\|\check{\sx}_{i-1}\|^2 \hspace{-2mm}
		\ea + 
		\ba{c}
		d^\prime \mu_{\max}\rho_{A}^{2(i+1)} \\
		e^\prime \mu^2_{\max}\rho_{A}^{2(i+1)}
		\ea,
		}
	where $b',c',d',e'$ are constants defined as
	\eq{
	b' &\hspace{-1mm}\define\hspace{-1mm} 4 \delta^2 h^\prime \|\cX'\|^2/\sigma_{11}, \quad c' \hspace{-1mm}\define\hspace{-1mm} 4 \delta^2 h^\prime \|\cX'\|^2 c^2d/(1 \hspace{-1mm}-\hspace{-1mm} \lambda), \\
	d' &\hspace{-1mm}\define\hspace{-1mm} 4 h' g/\sigma_{11}, \hspace{1.4cm} e' \hspace{-1mm}\define\hspace{-1mm} 4 h' g c^2 d/(1-\lambda).
	}
	These constants are independent of iterations. It can be verified that when iteration $i$ is large enough such that
\eq{
\rho_A^{2(i+1)} \le \min\left\{ \frac{\sigma_{11}}{2b'}, \frac{\sigma_{12}^2}{\sigma_{11} b'}, \frac{\sigma_{21}^2}{(1-\lambda)c^\prime}, \frac{\sigma_{22}^2}{(1-\lambda)c^\prime} \right\},
}
the inequality \eqref{xngyi-prime-2} becomes
\eq{\label{xngyi-prime-3}
\hspace{-3mm}
\ba{c}
\hspace{-2mm}\|\bar{\sx}_i\|^2\hspace{-2mm} \\
\hspace{-2mm}\|\check{\sx}_i\|^2 \hspace{-2mm}
\ea
&\preceq
\underbrace{\ba{cc}
\hspace{-2mm}1 \hspace{-0.6mm}- \hspace{-0.6mm} \frac{\sigma_{11}\mu_{\max}}{2}\hspace{-1mm} & \hspace{-1mm} \frac{2\sigma_{12}^2}{\sigma_{11}}\mu_{\max} \hspace{-2mm}\\
\hspace{-2mm}\frac{3\sigma_{21}^2\mu^2_{\max}}{1-\lambda} \hspace{-1mm}&\hspace{-1mm} \lambda \hspace{-0.6mm}+\hspace{-0.6mm} \frac{3\sigma_{22}^2 \mu^2_{\max}}{1-\lambda} \hspace{-2mm}
\ea}_{G^\prime} 
\ba{c}
\hspace{-2mm}\|\bar{\sx}_{i-1}\|^2\hspace{-2mm} \\
\hspace{-2mm}\|\check{\sx}_{i-1}\|^2 \hspace{-2mm}
\ea \nnb
& \quad \quad + 
{\ba{c}
d^\prime \mu_{\max} \\
e^\prime \mu^2_{\max}
\ea} \rho_{A}^{2(i+1)},
}
where we can prove $\rho \define \|G'\|_1 = 1 - O(\mu_{\max}) < 1$ by following arguments \eqref{xsdn8}. Inequality \eqref{xngyi-prime-3} further implies that 
\eq{\label{23ndn}
\left( \|\bar{\sx}_i\|^2 \hspace{-1mm}+\hspace{-1mm} \|\check{\sx}_i\|^2 \right) \le \rho \left( \|\bar{\sx}_{i-1}\|^2 \hspace{-1mm} + \hspace{-1mm} \|\check{\sx}_{i-1}\|^2 \right) + f' \rho_{A}^{2(i+1)} }
where $f'\define d'\mu_{\max} + e'\mu_{\max}^2 > 0$. Let $\beta = \max\{\rho, \rho_A\} < 1$. Inequality \eqref{23ndn} becomes
\eq{\label{23ndn-2}
	\left( \|\bar{\sx}_i\|^2 \hspace{-1mm}+\hspace{-1mm} \|\check{\sx}_i\|^2 \right) \le \beta \left( \|\bar{\sx}_{i-1}\|^2 \hspace{-1mm} + \hspace{-1mm} \|\check{\sx}_{i-1}\|^2 \right) \hspace{-1mm}+\hspace{-1mm} f' \beta^{2(i+1)}.
}
{\color{black}By adding $\gamma f' \beta^{2i+4}$, where $\gamma$ can be any positive constant to be chosen, to both sides of the above inequality, we get 
\eq{\label{2bs8}
&\hspace{-8mm}\left( \|\bar{\sx}_i\|^2 + \|\check{\sx}_i\|^2 \right)  + \gamma f' \beta^{2i+4}\nnb
\le &\ \beta \left( \|\bar{\sx}_{i-1}\|^2 + \|\check{\sx}_{i-1}\|^2 \right) + f' \beta^{2i+2} + \gamma f' \beta^{2i+4}\nn\\
= &\ \beta \left( \|\bar{\sx}_{i-1}\|^2 + \|\check{\sx}_{i-1}\|^2  + \frac{1+ \gamma \beta^2}{\beta}f' \beta^{2i+2}\right)
}
By setting
\eq{\label{23dns}
\gamma = \frac{1}{\beta - \beta^2} > 0, 
}
it can be verified that
\eq{\label{xnaswbn}
	\gamma = \frac{1+ \gamma \beta^2}{\beta}.
}}
Substituting \eqref{xnaswbn} into \eqref{2bs8}, we have
\eq{\label{2bs8-2}
	&\hspace{-1cm}\left( \|\bar{\sx}_i\|^2 + \|\check{\sx}_i\|^2 \right)  + \gamma f' \beta^{2(i+2)}\nnb
	\le &\ \beta \left( \|\bar{\sx}_{i-1}\|^2 + \|\check{\sx}_{i-1}\|^2  + \gamma f' \beta^{2(i+1)}\right).
}
As a result, the quantity $\left( \|\bar{\sx}_i\|^2 + \|\check{\sx}_i\|^2 \right)  + \gamma f' \beta^{2(i+2)}$ converges to $0$ linearly. Since $f'>0, \gamma>0$ and $\beta>0$, we can conclude that $\|\bar{\sx}_i\|^2 + \|\check{\sx}_i\|^2$, and hence $\|\twd_i\|^2 + \|\tyd_i\|^2$, converges to $0$ linearly.
}


\section{Error Recursion for EXTRA Consensus}
\label{app-sta-EXTRA-error-recursion}
Multiplying the second recursion of \eqref{extra-pd-2} by $\cV$ gives:
%
%
\eq{
	\cV  \sy^e_i = \cV  \sy^e_{i-1} + \frac{\cP-\cP \cA}{2} \sw_i^e. \label{extra-V-tran y}
}
Substituting into the first recursion of \eqref{extra-pd-2} gives
\eq{
	\tcA \sw_i^e &\hspace{-0.5mm}=\hspace{-0.5mm} \tcA \sw_{i\hspace{-0.3mm}-\hspace{-0.3mm}1}^e\hspace{-0.8mm}-\hspace{-0.8mm}\mu \grad \cJ^o(\sw_{i\hspace{-0.3mm}-\hspace{-0.3mm}1}^e)\hspace{-0.8mm}-\hspace{-0.8mm}\cP^{-1}\cV  \sy_{i}^e, \label{extra-sdfu}
}
From \eqref{extra-sdfu} and the second recursion in \eqref{extra-pd-2} we conclude that
\begin{equation}
	\left\{
	\begin{aligned}
		\tcA \twd_i^e &\hspace{-0.5mm}=\hspace{-0.5mm} \tcA\twd_{i-1}^e\hspace{-0.8mm}+\hspace{-0.8mm}\mu \grad \cJ^o(\sw_{i\hspace{-0.3mm}-\hspace{-0.3mm}1}^e) \hspace{-0.8mm}+\hspace{-0.8mm} \cP^{-1} \cV  \sy^e_{i}, \label{extra-1-subtracrt} \\
		\tyd_i^e &= \tyd_{i-1}^e - \cV \sw_i^e. 
	\end{aligned}
	\right.
\end{equation}
Subtracting the optimality condition \eqref{extra-KKT-1-1}--\eqref{extra-KKT-2-2} from \eqref{extra-1-subtracrt} leads to
\begin{equation}
	\left\{
	\begin{aligned}
		\tcA \twd_i^e &\hspace{-0.5mm}=\hspace{-0.5mm} (\tcA \hspace{-0.8mm}-\hspace{-0.8mm}\mu \cH_{i-1})\twd_{i-1}^e\hspace{-0.8mm} \hspace{-0.0mm}-\hspace{-0.3mm} \cP^{-1} \cV  \tyd_{i}^e, \label{extra-1-subtracrt-1} \\
		\tyd_i^e &= \tyd_{i-1}^e +\cV \twd_i^e. 
	\end{aligned}
	\right.
\end{equation}
which is also equivalent to
\eq{\label{extra-xcnh09}
	&\ \ba{cc}
	\hspace{-2mm}\tcA & \cP^{-1}\cV \hspace{-2mm}\\
	\hspace{-2mm}-\cV & I_{MN} \hspace{-2mm}
	\ea\hspace{-1mm}
	\ba{c}
	\hspace{-2mm}\twd_i^e\hspace{-2mm}\\
	\hspace{-2mm}\tyd_i^e\hspace{-2mm}
	\ea \hspace{-1mm}=\hspace{-1mm}
	\ba{cc}
	\hspace{-2mm}\tcA- \mu \cH_{i-1} & 0\hspace{-2mm}\\
	\hspace{-2mm}0 & I_{MN}\hspace{-2mm}
	\ea\hspace{-1mm}
	\ba{c}
	\hspace{-2mm}\twd^e_{i-1}\hspace{-2mm}\\
	\hspace{-2mm}\tyd^e_{i-1}\hspace{-2mm}
	\ea.
}
Using relations $\tcA = \frac{I_{MN} + \cA}{2}$ and $\cV^2=\frac{\cP - \cP\cA}{2}$, it is easy to verify that
\eq{\label{extra-nxcm987}
	\ba{cc}
	\tcA & \cP^{-1}\cV \\
	-\cV & I_{MN}
	\ea^{-1} = 
	\ba{cc}
	I_{MN} &  -\cP^{-1}\cV \\
	\cV & I_{MN}-\cV \cP^{-1} \cV
	\ea.
}
Substituting \eqref{extra-nxcm987} into \eqref{extra-xcnh09} gives \eqref{extra-error-recursion-2}--\eqref{extra-T-defi}.

\section{Error Recursion in Transformed Domain}
\label{app-sta-EXTRA-reduced}
Multiplying both sides of \eqref{extra-error-recursion-2} by $(\cX')^{-1}$:
%
\eq{
	(\cX')^{-1}
	\ba{c}
	\hspace{-1mm}\twd^e_i\hspace{-1mm}\\
	\hspace{-1mm}\tyd^e_i\hspace{-1mm}
	\ea
	=&\; 
	[(\cX')^{-1}(\cB^e-\cT^e_{i-1}) \cX'] (\cX')^{-1}
	\ba{c}
	\hspace{-1mm}\twd^e_{i-1}\hspace{-1mm}\\
	\hspace{-1mm}\tyd^e_{i-1}\hspace{-1mm}
	\ea
}
leads to
\eq{\label{extra-recursion-transform-2}
	\ba{c}
	\hspace{-1mm}\bar{\sx}^e_i\hspace{-1mm}\\
	\hspace{-1mm}\widehat{\sx}^e_i\hspace{-1mm}\\
	\hspace{-1mm}\check{\sx}^e_i \hspace{-1mm}
	\ea
	\hspace{-1mm}=\hspace{-1mm}
	&\; 
	\left( 
	\ba{ccc}
	I_M & 0 & 0\\ 0 & I_M &0 \\0 & 0 & \cD_1
	\ea
	- \cS^e_{i-1}
	\right)
	\ba{c}
	\hspace{-1mm}\bar{\sx}^e_{i-1}\hspace{-1mm}\\
	\hspace{-1mm}\widehat{\sx}^e_{i-1}\hspace{-1mm}\\
	\hspace{-1mm}\check{\sx}^e_{i-1}\hspace{-1mm}
	\ea,	
}
where we defined
\eq{\label{extra-x-bar and x-check}
	\ba{c}
	\hspace{-1mm}\bar{\sx}^e_i\hspace{-1mm}\\
	\hspace{-1mm}\widehat{\sx}^e_i\hspace{-1mm}\\
	\hspace{-1mm}\check{\sx}^e_i \hspace{-1mm}
	\ea \define&\; (\cX')^{-1}\ba{c}
	\hspace{-1mm}\twd^e_i\hspace{-1mm}\\
	\hspace{-1mm}\tyd^e_i\hspace{-1mm}
	\ea = 
	\ba{c}
	\cL_1\tran \\
	\cL_2\tran \\
	\cX_L
	\ea
	\ba{c}
	\hspace{-1mm}\twd^e_i\hspace{-1mm}\\
	\hspace{-1mm}\tyd^e_i\hspace{-1mm}
	\ea,
}
and
\eq{\label{S-extra}
	\hspace{-2mm}\cS^e_{i-1}\define&(\cX')^{-1}\cT^e_{i-1}\cX'\nnb
	\hspace{-4mm}=&\ba{ccc}
	\hspace{-2mm}\cL_1\tran \cT^e_{i-1}\cR_1 &  \cL_1\tran \cT^e_{i-1}\cR_2 &  \frac{1}{c}\cL_1\tran\cT^e_{i-1}\cX_R \hspace{-2mm}\\
	\hspace{-2mm}\cL_2\tran \cT^e_{i-1}\cR_1&  \cL_2\tran \cT^e_{i-1}\cR_2 &  \frac{1}{c}\cL_2\tran\cT^e_{i-1}\cX_R \hspace{-2mm}\\
	\hspace{-2mm}c\cX_L\cT^e_{i-1}\cR_1 &  c\cX_L\cT^e_{i-1}\cR_2 & \cX_L\cT^e_{i-1}\cX_R \hspace{-2mm}
	\ea.
}
To compute each entry of $\cS^e_{i-1}$, 
%
we let
\eq{
	\cX_R = 
	\ba{cc}
	\cX_{R,u} \\
	\cX_{R,d}
	\ea,
}
where $\cX_{R,u}\in \RR^{NM \times 2(N-1)M}$ and $\cX_{R,d}\in \RR^{NM \times 2(N-1)M}$. For the first line of $\cS^e_{i-1}$, it can be verified that
\eq{\label{S-1st-extra}
	\cL_1\tran \cT^e_{i-1}\cR_1 &=\hspace{-1mm} \mu\overline{\cP}\tran\cH_{i-1}\cI,\\
	\cL_1\tran \cT^e_{i-1}\cR_2 &= \hspace{-1mm} 0,\\
	\frac{1}{c}\cL_1\tran \cT^e_{i-1}\cX_R &= \hspace{-1mm} \frac{\mu}{c} \overline{\cP}\tran \cH_{i\hspace{-0.3mm}-\hspace{-0.3mm}1}\cX_{R,u}.
}
Likewise, noting that 
\eq{
	\cL_2\tran \cT^e_{i-1} = \ba{cc}
	\hspace{-1.5mm}0 \hspace{-1mm}&\hspace{-1mm} \frac{1}{N}\cI\tran\hspace{-1.5mm}
	\ea\hspace{-1.5mm}
	\ba{cc}
	\hspace{-1.5mm}\mu \cH_{i-1} &  0\hspace{-1.5mm} \\
	\hspace{-1.5mm}\mu \cV\cH_{i-1} & 0\hspace{-1.5mm}
	\ea \overset{\eqref{xcn987-2}}{=} \ba{cc}
	\hspace{-1.5mm}0 \hspace{-1mm}&\hspace{-1mm} 0\hspace{-1.5mm}
	\ea,
}
we find for the second line of ${\cS}_{i-1}^e$ that
\eq{\label{S-2nd-extra}
	c\cL_2\tran \cT^e_{i-1}\cR_1 =0,\;\; c\cL_2\tran \cT^e_{i-1}\cR_2 =0,\;\; \cL_2\tran\cT^e_{i-1}\cX_R =0.
}
Substituting \eqref{S-extra}, \eqref{S-1st-extra} and \eqref{S-2nd-extra} into \eqref{extra-recursion-transform-2}, we rewrite \eqref{extra-recursion-transform-2} as
\eq{\label{znhg-extra}\footnotesize
	\hspace{0mm}\ba{c}
	\hspace{-2mm}\bar{\sx}^e_i\hspace{-2mm} \\
	\hspace{-2mm}\widehat{\sx}^e_i\hspace{-2mm} \\
	\hspace{-2mm}\check{\sx}^e_i\hspace{-2mm}
	\ea
	\hspace{-1.2mm}&=\hspace{-1.2mm}\footnotesize
	\ba{ccc}
	\hspace{-2.5mm}I_{\hspace{-0.3mm} M} \hspace{-1.2mm}-\hspace{-1.2mm} \mu \overline{\cP}\tran\hspace{-0.3mm}\cH_{i\hspace{-0.3mm}-\hspace{-0.3mm}1}\cI &  \hspace{-0.8mm}0 &\hspace{-1.3mm} -\frac{\mu}{c}\overline{\cP}\tran\hspace{-0.3mm}\cH_{i\hspace{-0.3mm}-\hspace{-0.3mm}1}\cX_{R,u} \hspace{-2mm}\\
	\hspace{-3mm}0 &  \hspace{-1.3mm}I_{\hspace{-0.3mm} M} &\hspace{-3.3mm} 0 \hspace{-2mm} \\
	\hspace{-2.5mm}-c\cX_L\cT^e_{i-1}\cR_1 &  \hspace{-1.3mm}-c\cX_L\cT^e_{i-1}\cR_2 \hspace{-1.3mm} &  \cD_1 - \cX_L\cT^e_{i-1}\cX_R \hspace{-2mm}
	\ea 
	\hspace{-2mm}\ba{c}
	\hspace{-2mm}\bar{\sx}^e_{i\hspace{-0.5mm}-\hspace{-0.5mm}1}\hspace{-2mm} \\
	\hspace{-2mm}\widehat{\sx}^e_{i\hspace{-0.5mm}-\hspace{-0.5mm}1}\hspace{-2mm} \\
	\hspace{-2mm}\check{\sx}^e_{i\hspace{-0.5mm}-\hspace{-0.5mm}1}\hspace{-2mm}
	\ea
}
From the second line of \eqref{znhg-extra}, we get
\eq{\label{28cn00-extra}
	\widehat{\sx}^e_i = \widehat{\sx}^e_{i-1}.
}
As a result, $\widehat{\sx}^e_i$ will converge to $0$ only if the initial value $\widehat{\sx}^e_{0} = 0$. To verify that, from the definition of $\cL_2$ in \eqref{tsdhb} and \eqref{extra-x-bar and x-check} we have
\eq{\label{hx_0=0-extra}
	\widehat{\sx}_0^e &= \cL_2\tran \ba{c}
	\hspace{-1mm}\twd_0^e\hspace{-1mm}\\
	\hspace{-1mm}\tyd_0^e\hspace{-1mm}
	\ea = \frac{1}{N}\cI\tran \tyd_0^e  \nn
}
\eq{
	&\overset{\eqref{extra-error}}{=}\frac{1}{N}\cI\tran (\sy^\star_o - \sy_0^e) \overset{\eqref{zn-0-extra}}{=} \frac{1}{N}\cI\tran (\sy^\star_o - \cV \sw^e_0).
}
Recall that $\sy_o^\star$ lies in the $\cR(\cV)$, so that $\sy^\star_o - \cV \sw_0$ also lies in $\cR(\cV)$. Recall further from Lemma \ref{lm:null-V} that $\cI\tran \cV =0$, and conclude that $\widehat{\sx}^e_0=0$. Therefore, from \eqref{28cn00-extra} we have 
\eq{\label{xzcn-extra}
	\widehat{\sx}_i^e=0, \quad \forall i\ge 0
}
With \eqref{xzcn-extra}, recursion \eqref{znhg-extra} is equivalent to \eqref{final-recursion-extra}.	

\section{Proof of Theorem \ref{lm-convergence-extra}}
\label{app-conv-extra}
From the first line of recursion \eqref{final-recursion-extra}, we have
\eq{\label{x_bar-recursion-extra}
	\bar{\sx}^e_i =& \left(I_M \hspace{-1mm} - \hspace{-1mm} {\mu \overline{\cP}\tran \cH_{i\hspace{-0.3mm}-\hspace{-0.3mm}1}\cI}\right)\bar{\sx}^e_{i\hspace{-0.3mm}-\hspace{-0.3mm}1} \hspace{-1mm} - \hspace{-1mm}
	\frac{\mu}{c} \overline{\cP}\tran \cH_{i-1}\cX_{R,u} \check{\sx}^e_{i-1}.
} 
Squaring both sides and using Jensen's inequality gives
\eq{\label{x_bar_square-extra}
	\|\bar{\sx}^e_i\|^2=&\ \left\|\left(I_M \hspace{-1mm} - \hspace{-1mm} {\mu \overline{\cP}\tran \cH_{i\hspace{-0.3mm}-\hspace{-0.3mm}1}\cI}\right) \bar{\sx}^e_{i-1} - 
	\frac{\mu}{c} \overline{\cP}\tran\cH_{i-1}\cX_{R,u} \check{\sx}^e_{i-1}\right\|^2 \nnb
	\le &\ \frac{1}{1-t}\left\|I_M \hspace{-1mm} - \hspace{-1mm} {\mu \overline{\cP}\tran \cH_{i\hspace{-0.3mm}-\hspace{-0.3mm}1}\cI}\right\|^2\|\bar{\sx}^e_{i-1}\|^2 \nnb
	&\quad + \frac{1}{tc^2}\|\mu\overline{\cP}\tran\cH_{i-1}\cX_{R,u}\|^2 \|\check{\sx}^e_{i-1}\|^2
}
for any $t\in (0,1)$. For the term ${\mu \overline{\cP}\tran \cH_{i\hspace{-0.3mm}-\hspace{-0.3mm}1}\cI}$, we have
\eq{
	\hspace{-3mm} {\mu \overline{\cP}\tran \cH_{i-1}\cI} = \mu \sum_{k=1}^{N}p_k  H_{k,i-1} \overset{\eqref{H-properties}}{\ge}  \frac{\mu}{N} \nu I_M \define {\sigma^e_{11} \mu I_M},
}
where $\sigma_{11} =\nu/N$. Similarly, we can obtain the upper bound
\eq{
	{\mu \overline{\cP}\tran \cH_{i-1}\cI} & = \mu \sum_{k=1}^{N}p_k H_{k,i-1}\overset{\eqref{H-properties}}{\le} \left(\sum_{k=1}^{N}p_k  \right)\delta\mu  I_M \hspace{-1.5mm} \overset{(a)}{=} \delta\mu  I_M,
}
where equality $(a)$ holds because $\sum_{k=1}^{N}p_k=1$. It is obvious that $\delta >\sigma^e_{11}$. As a result, we have
\eq{\label{xzcn8237-extra}
	(1\hspace{-0.8mm}-\hspace{-0.8mm}\delta \mu)I_M \hspace{-0.8mm}\le\hspace{-0.8mm} I_{M} \hspace{-0.8mm}-\hspace{-0.8mm} {\mu \overline{\cP}\tran\cH_{i-1}\cI} \le (1\hspace{-0.8mm}-\hspace{-0.8mm}\sigma^e_{11}\mu)I_M,
}
which implies that when the step-size is sufficiently small to satisfy  
\eq{\label{xchayu-extra}
	\mu<1/\delta,
} 
it will hold that
\eq{\label{uiox98-extra}
	&\ \left\|I_{M} \hspace{-0.8mm}-\hspace{-0.8mm} {\mu \overline{\cP}\tran\cH_{i-1}\cI} \right\|^2 \le  (1-\sigma^e_{11}\mu_{\max})^2.
}
On the other hand, we have
\eq{\label{bguj87-extra}
	&\frac{1}{c^2}\|\mu\overline{\cP}\tran\cH_{i-1}\cX_{R,u}\|^2\nnb
	&\ \le \frac{\mu^2}{c^2} \|\overline{\cP}\tran\|^2 \|\cH_{i-1}\|^2 \|\cX_{R,u}\|^2 \nnb
	&\ \le \frac{1}{c^2}\left(\sum_{k=1}^{N}p_k^2\right)\delta^2 \|\cX_{R,u}\|^2 \mu^2 \nnb
	&\ =  \frac{\delta^2}{c^2N}\|\cX_{R,u}\|^2 \mu^2  \overset{\eqref{xcn398}}{\le} \frac{\delta^2}{c^2 N}\|\cX_{R}\|^2 \mu^2 \define (\sigma^e_{12})^2 \mu^2,
}
where $\sigma^e_{12}= \delta \|\cX_{R}\|/(c\sqrt{N})$ and the ``$=$" sign in the third line holds because $p_k = 1/N$. Notice that $\sigma^e_{12}$ is independent of $\mu$. Substituting \eqref{uiox98-extra} and \eqref{bguj87-extra} into \eqref{x_bar_square-extra}, we get
\eq{\label{x_bar_square-2-extra}
	&\hspace{-5mm} \|\bar{\sx}^e_i\|^2 \nnb
	\le &\ \frac{1}{1-t}(1-\sigma^e_{11}\mu)^2\|\bar{\sx}^e_{i-1}\|^2 + \frac{1}{t} (\sigma^e_{12})^2\mu^2 \|\check{\sx}^e_{i-1}\|^2 \nnb
	= &\ (1-\sigma^e_{11}\mu)\|\bar{\sx}^e_{i-1}\|^2 + \frac{(\sigma^e_{12})^2}{\sigma^e_{11}}\mu\|\check{\sx}^e_{i-1}\|^2,
}
where we are selecting $t = \sigma^e_{11}\mu$.

Next we check the second line of recursion \eqref{final-recursion-extra}, which amounts to
\eq{
	\check{\sx}^e_i =&\; -c\cX_L\cT^e_{i-1}\cR_1 \bar{\sx}^e_{i-1}+ (\cD_1 - \cX_L\cT^e_{i-1}\cX_R)\check{\sx}^e_{i-1}\nnb
	=&\; \cD_1\check{\sx}^e_{i-1} - \cX_L\cT^e_{i-1}(c\cR_1 \bar{\sx}^e_{i-1}
	+ \cX_R\check{\sx}^e_{i-1}). \label{yujs99-extra}
}
Squaring both sides of \eqref{yujs99-extra}, and using Jensen's inequality again, 
\eq{
	\|\check{\sx}^e_i\|^2 =& \|\cD_1 \check{\sx}^e_{i-1} - \cX_L\cT^e_{i-1}(c\cR_1 \bar{\sx}^e_{i-1}
	+ \cX_R\check{\sx}^e_{i-1})\|^2\nn\\
	\le & \frac{\|\cD_1\|^2}{t} \|\check{\sx}^e_{i-1}\|^2 \hspace{-0.8mm}+\hspace{-0.8mm} \frac{1}{1-t}\| \cX_L\cT^e_{i-1}(c\cR_1 \bar{\sx}^e_{i-1}
	+ \cX_R\check{\sx}^e_{i-1})\|^2	 \nnb
	\le & \frac{\|\cD_1\|^2}{t} \|\check{\sx}^e_{i-1}\|^2 + \frac{2c^2}{1-t} \| \cX_L\cT^e_{i-1}\cR_1\|^2 \|\bar{\sx}^e_{i-1}\|^2\nn\\
	&\hspace{0.63cm}+ \frac{2}{1-t} \|\cX_L\cT^e_{i-1}\cX_R\|^2 \| \check{\sx}^e_{i-1}\|^2.
}
where $t\in(0,1)$. From Lemma \ref{lm-B-decomposition} we have that $\lambda \define \|D_1\| = \sqrt{\lambda_2(\widetilde{A})}<1$. By setting $t=\lambda$, we reach
\eq{\label{yuwe9-extra}
	\|\check{\sx}^e_i\|^2 
	\leq&\ \lambda \|\check{\sx}^e_{i-1}\|^2 + \frac{2c^2}{1-\lambda} \| \cX_L\cT^e_{i-1}\cR_1\|^2 \|\bar{\sx}^e_{i-1}\|^2\nn\\
	&\;\;\;\;\;\hspace{1.03cm}+ \frac{2}{1-\lambda} \|\cX_L\cT^e_{i-1}\cX_R\|^2 \| \check{\sx}^2_{i-1}\|^2.
}
From the definition of $\cT^e_{i-1}$ in \eqref{extra-T-defi}, we have
\eq{\label{xha9867-extra}
	\cT^e_{i-1} \hspace{-1mm}=\hspace{-1mm} \mu
	\ba{cc}
	\hspace{-2mm}\cH_{i-1} & 0\hspace{-2mm} \\
	\hspace{-2mm}\cV  \cH_{i-1} & 0 \hspace{-2mm}
	\ea \hspace{-1mm}=\hspace{-1mm} \mu
	\underbrace{\ba{cc}
		\hspace{-2mm}I_{MN} & 0\hspace{-2mm} \\
		\hspace{-2mm}\cV  & 0\hspace{-2mm}
		\ea}_{\define \cT_e}
	\ba{cc}
	\hspace{-2mm}\cH_{i-1} & 0\hspace{-2mm} \\
	\hspace{-2mm}0 &  \cH_{i-1}\hspace{-2mm}
	\ea,
	%
}
which implies that
\eq{\label{nui89-extra}
	\|\cT^e_{i-1}\|^2 \le \mu^2 \|\cT_e\|^2 \left( \max_{1\le k\le N} \|H_{k,i-1}\|^2 \right) \le  \|\cT_e\|^2 \delta^2 \mu^2.
}
We also emphasize that $\|\cT_e\|^2$ is independent of $\mu$. With inequality \eqref{nui89-extra}, we further have
\eq{
	c^2\| \cX_L\cT^e_{i-1} \cR_1\|^2 &\le c^2 \mu^2 \|\cX_L\|^2 \|\cT_e\|^2 \|\cR_1\|^2\delta^2  \nnb
	&\define (\sigma^e_{21})^2 \mu^2 \label{xcbnb978-1-extra}\\
	\| \cX_L\cT^e_{i-1}\cX_R\|^2  &\le \mu^2 \|\cX_L\|^2 \|\cT_e\|^2 \|\cX_R\|^2 \delta^2 \nnb
	&\define (\sigma^e_{22})^2 \mu^2,\label{xcbnb978-2-extra}
}
notice that $\|\cR_1\|=1$, $\sigma^e_{21}$ and $\sigma^e_{22}$ are defined as
\eq{
	\sigma^e_{21}=c \|\cX_L\| \|\cT_e\| \delta,\ \ \sigma^e_{22}=\|\cX_L\| \|\cT_e\| \|\cX_R\|\delta.
}
With \eqref{xcbnb978-1-extra} and \eqref{xcbnb978-2-extra}, inequality \eqref{yuwe9-extra} becomes 
\eq{\label{yuwe9-2-extra}
	\| \check{\sx}^e_i\|^2 
	\leq \left(\lambda + \frac{2(\sigma_{22}^e)^2 \mu^2}{1-\lambda}\right) \|\check{\sx}^e_{i-1}\|^2 + \frac{2(\sigma^e_{21})^2 \mu^2}{1-\lambda}  \|\bar{\sx}^e_{i-1}\|^2.
}
Combining \eqref{x_bar_square-2-extra} and \eqref{yuwe9-2-extra}, we arrive at the inequality recursion:
\eq{\label{xngyi-extra}
	\hspace{-3mm}
	\ba{c}
	\hspace{-2mm}\|\bar{\sx}^e_i\|^2\hspace{-2mm} \\
	\hspace{-2mm}\|\check{\sx}^e_i\|^2 \hspace{-2mm}
	\ea
	\preceq
	\underbrace{\ba{cc}
		\hspace{-2mm}1-\sigma^e_{11}\mu\hspace{-1mm} & \hspace{-1mm} \frac{(\sigma_{12}^e)^2}{\sigma^e_{11}}\mu\hspace{-2mm}\\
		\hspace{-2mm}\frac{2(\sigma_{21}^e)^2\mu^2}{1-\lambda} \hspace{-1mm}&\hspace{-1mm} \lambda + \frac{2(\sigma_{22}^e)^2 \mu^2}{1-\lambda}\hspace{-2mm}
		\ea}_{\define G_e}
	\ba{c}
	\hspace{-2mm}\|\bar{\sx}^e_{i-1}\|^2\hspace{-2mm} \\
	\hspace{-2mm}\|\check{\sx}^e_{i-1}\|^2 \hspace{-2mm}
	\ea.
}
From this point onwards, we follow exactly the same argument as in \eqref{upper-bound-1}--\eqref{237sdnkk} to arrive at the conclusion in Theorem \ref{lm-convergence-extra}.											

\vspace{-2mm}
\section{Proof of Lemma \ref{lm-sta-ed}}\label{app-sta-ed}
It is observed from expression \eqref{cn9999} for $E_d$ that one of the eigenvalues is $1-\mu\sigma^2$. It is easy to verify that when $\mu$ satisfies \eqref{mu-range}, it holds that
\(
-1< 1 - \mu \sigma^2 < 1.
\)
Next, we check the other two eigenvalues. Let $\theta$ denote a generic eigenvalue of $E_d$. From the right-bottom $2\times 2$ block of $E_d$ in \eqref{cn9999}, we know that $\theta$ will satisfy the following characteristic polynomial
\eq{\label{ed-cp}
\theta^2 - (2-\mu \sigma^2)a\, \theta + (1-\mu \sigma^2) a =0,
}
where $a\in(0,1)$ is a combination weight (see the expression for $A$ in \eqref{2-MSE-A}). Solving \eqref{ed-cp}, the two roots are
\eq{\label{roots}
\theta_{1,2} = \frac{(2\hspace{-0.8mm}-\hspace{-0.8mm}\mu \sigma^2)a \pm \hspace{-0.8mm} \sqrt{(2\hspace{-0.8mm}-\hspace{-0.8mm}\mu\sigma^2)^2 a^2 \hspace{-0.8mm}-\hspace{-0.8mm} 4(1\hspace{-0.8mm}-\hspace{-0.8mm}\mu\sigma^2)a }}{2}. 
}
Let 
\eq{\label{xcnweh8}
\Delta = (2\hspace{-0.8mm}-\hspace{-0.8mm}\mu\sigma^2)^2 a^2 \hspace{-0.8mm}-\hspace{-0.8mm} 4(1\hspace{-0.8mm}-\hspace{-0.8mm}\mu\sigma^2)a.
}
Based on the value of $\mu\sigma^2$ and $a$, $\Delta$ can be negative, zero, or positive. Recall from \eqref{mu-range} that $0<\mu\sigma^2<2$. In that case, over the smaller interval $1\leq \mu\sigma^2< 2$, it holds that $(1-\mu\sigma^2)\geq 0$ and, from \eqref{xcnweh8}, $\Delta >0$. For this reason, as indicated in cases 1 and 2 below, the scenarios corresponding to $\Delta<0$ or $\Delta=0$ can only occur over $0< \mu\sigma^2 < 1$:

\noindent \textbf{Case 1: $\Delta <0$}. It can be verified that when 
\eq{\label{j98}
1-\mu\sigma^2 > 0,\quad \mbox{and}\quad a < \frac{4(1-\mu\sigma^2)}{(2-\mu\sigma^2)^2},
}
it holds that $\Delta < 0$. In this situation, both $\theta_1$ and $\theta_2$ are imaginary numbers with magnitude 
\eq{
|\theta_1| \hspace{-0.8mm}=\hspace{-0.8mm} |\theta_2| \hspace{-0.8mm}=\hspace{-0.8mm} \frac{1}{4}\left( (2\hspace{-0.8mm}-\hspace{-0.8mm}\mu \sigma^2)^2a^2 \hspace{-0.8mm}+\hspace{-0.8mm} (-\hspace{-0.4mm}\Delta) \right) \hspace{-0.8mm}=\hspace{-0.8mm} (1\hspace{-0.8mm}-\hspace{-0.8mm}\mu\sigma^2 ) a < 1,
}
where the last inequality holds because $0< \mu\sigma^2 <1$ (see \eqref{mu-range} and \eqref{j98}) and $a\in (0,1)$.

\noindent \textbf{Case 2: $\Delta =0$}. It can be verified that when 
\eq{\label{bv6}
	1-\mu\sigma^2 > 0,\quad \mbox{and}\quad a = \frac{4(1-\mu\sigma^2)}{(2-\mu\sigma^2)^2},
}
it holds that $\Delta = 0$. In this situation, from \eqref{roots} we have
\eq{
	\theta_1 = \theta_2 = \frac{(2-\mu \sigma^2)a}{2} < 1,
}
where the last inequality holds because $0< \mu\sigma^2 <1$ (see \eqref{mu-range} and \eqref{j98}) and $a\in (0,1)$. Observe further that the upper bound on $a$ in \eqref{j98} is positive and smaller than one when $0<\mu\sigma^2 <1$.

\noindent \textbf{Case 3: $\Delta >0$}. It can be verified that when 
\eq{\label{n2g8}
	1-\mu\sigma^2 > 0,\quad \mbox{and}\quad a > \frac{4(1-\mu\sigma^2)}{(2-\mu\sigma^2)^2},
}
or when $1\le \mu \sigma^2 < 2$,
it holds that $\Delta > 0$. In this situation, $\theta$ is real and 
\eq{
\theta_{1} &= \frac{(2\hspace{-0.8mm}-\hspace{-0.8mm}\mu \sigma^2)a + \hspace{-0.8mm} \sqrt{(2\hspace{-0.8mm}-\hspace{-0.8mm}\mu\sigma^2)^2 a^2 \hspace{-0.8mm}-\hspace{-0.8mm} 4(1\hspace{-0.8mm}-\hspace{-0.8mm}\mu\sigma^2)a }}{2},\\
\theta_{2} &= \frac{(2\hspace{-0.8mm}-\hspace{-0.8mm}\mu \sigma^2)a - \hspace{-0.8mm} \sqrt{(2\hspace{-0.8mm}-\hspace{-0.8mm}\mu\sigma^2)^2 a^2 \hspace{-0.8mm}-\hspace{-0.8mm} 4(1\hspace{-0.8mm}-\hspace{-0.8mm}\mu\sigma^2)a }}{2}. 
}
Moreover, since $(2\hspace{-0.8mm}-\hspace{-0.8mm}\mu \sigma^2)a>0$, we have
\eq{
|\theta_2| < |\theta_1|=\theta_1.
}
We regard $\theta_1$ as a function of $a$, i.e., $\theta_1=f(a)$. It holds that $f(a)$ is monotone increasing with $a$. To prove it, note that
\begin{equation}
f^\prime(a)=\frac{2-\mu\sigma^2}{2} + \frac{2(2-\mu\sigma^2)a - 4(1-\mu\sigma^2)}{4\sqrt{\Delta}}.
\end{equation}
Now since
\eq{\label{xcnwj88}
&\ \Delta = (2\hspace{-0.8mm}-\hspace{-0.8mm}\mu\sigma^2)^2 a^2 \hspace{-0.8mm}-\hspace{-0.8mm} 4(1\hspace{-0.8mm}-\hspace{-0.8mm}\mu\sigma^2)a > 0 \nnb
\Longleftrightarrow &\ (2\hspace{-0.8mm}-\hspace{-0.8mm}\mu\sigma^2)^2 a > 4(1\hspace{-0.8mm}-\hspace{-0.8mm}\mu\sigma^2) \;\;\; \mbox{(because $a>0$)} \nnb
\Longrightarrow &\ 2(2-\mu\sigma^2)a > 4(1\hspace{-0.8mm}-\hspace{-0.8mm}\mu\sigma^2),
}
we conclude that $f^\prime(a)>0$. Since $a<1$, it follows that
\eq{
\theta_1 = f(a) < f(1) = 1.
}
In summary, when $\mu$ satisfies \eqref{mu-range}, for any $a\in (0,1)$ it holds that all three eigenvalues of $E_d$ stay within the unit-circle, which implies that $\rho(E_d)<1$, and also $\rho(\cE_d)<1$. As a result, $\check{\sz}_i$ in \eqref{cngw7} will converge to $0$. Since $\widehat{\sz}_i=0$ for any $i$, we  conclude that $\tzd_i$ converges to $0$.

\section{Proof of Lemma \ref{lm-sta-ex}}\label{app-sta-extra}
Similar to the arguments used to establish Lemma \ref{lm-Q_d-decom} and \eqref{ngy7}--\eqref{cngw7}, the EXTRA error recursion \eqref{special-extra} can also be divided into two separate recursions
\eq{
	\widehat{\sz}_i^e = \widehat{\sz}_{i-1}^e, \quad 
	\mbox{and} \quad
	\check{\sz}_i^e = \cE_e \check{\sz}_{i-1}^e,
}
where $\cE_e = E_e \otimes I_M$, and
\eq{\label{cn9999-extra}
	E_e = 
	\ba{ccc}
	\hspace{-2mm}1-\mu^e\sigma^2 & 0 & 0\hspace{-2mm}\\
	\hspace{-2mm}0 & a-\mu^e \sigma^2 & -\sqrt{2-2a}\hspace{-2mm}\\
	\hspace{-2mm}0 & (a-\mu^e\sigma^2)\sqrt{\frac{1-a}{2}} & a\hspace{-2mm}
	\ea.
}
Also, since both $\sy_0^e$ and $\sy_o^\star$ lie in the $\mathrm{range}(\cV)$, it can be verified that $\widehat{\sz}_0^e=0$. Therefore, we only focus on the convergence of $\check{\sz}_i^e$. Let $\theta^e$ denote a generic eigenvalue of $E_e$. From the right-bottom $2\times 2$ block of $E_e$ in \eqref{cn9999-extra}, we know that $\theta^e$ will satisfy the following characteristic polynomial
\eq{\label{ed-cp-ex}
	(\theta^e)^2 - (2a-\mu^e \sigma^2)\, (\theta^e) + (a-\mu^e \sigma^2) =0.
}
Solving it, we have
\eq{
\theta^e_{1,2}&=\frac{2a-\mu^e\sigma^2 \pm \sqrt{(2a-\mu^e\sigma^2)^2 - 4(a-\mu^e \sigma^2)}}{2}.
}
Now we suppose $\mu^e\sigma^2 \ge a+1$ as noted in \eqref{mu-range-extra}, it then follows that 
\eq{\label{xcn2hg}
	a-\mu^e\sigma^2\le -1
	} 
and hence both $\theta_1^e$ and $\theta_2^e$ are real numbers with
\eq{
\theta^e_{1}=\frac{2a\hspace{-1mm}-\hspace{-1mm}\mu^e\sigma^2 \hspace{-1mm}+\hspace{-1mm} \sqrt{(2a-\mu^e\sigma^2)^2 \hspace{-1mm}+\hspace{-1mm} 4(\mu^e \sigma^2-a)}}{2},
}
\eq{
\theta^e_{2}=\frac{2a\hspace{-1mm}-\hspace{-1mm}\mu^e\sigma^2 \hspace{-1mm}-\hspace{-1mm} \sqrt{(2a-\mu^e\sigma^2)^2 \hspace{-1mm}+\hspace{-1mm} 4(\mu^e \sigma^2-a)}}{2}.
}
Moreover, with $\mu^e\sigma^2 \ge a+1$ we further have  \eq{\label{b88}2a-\mu^e\sigma^2\le a-1<0,} which implies that
\eq{
|\theta_2^e|= \frac{\mu^e\sigma^2\hspace{-1mm}-\hspace{-1mm}2a \hspace{-1mm}+\hspace{-1mm} \sqrt{(2a-\mu^e\sigma^2)^2 \hspace{-1mm}+\hspace{-1mm} 4(\mu^e \sigma^2-a)}}{2} > 1,
}
where the last inequality holds because of \eqref{xcn2hg} and \eqref{b88}. Therefore, when $\mu^e$ is chosen such that
$\mu^e\sigma^2 \ge 1+a$, there always exists one eigenvalue $\theta^e$ such that $|\theta^e|>1$ which implies that $\check{\sz}_i^e$ diverges, and so does $\tzd_i^e$.

\bibliographystyle{IEEEbib}
\bibliography{reference}
\end{document}